\newtheorem{proposition}{Proposition}[section]
\newtheorem{definition}[proposition]{Definition}
\newtheorem{lemma}[proposition]{Lemma}
\newtheorem{theorem}[proposition]{Theorem}
\newtheorem{corollary}[proposition]{Corollary}
\newtheorem{remark}[proposition]{Remark}
\newtheorem{examples}[proposition]{Examples}
\newcounter{theor}
\newtheorem{teor}[theor]{Theorem}
\DeclareMathOperator{\vol}{Vol}
\DeclareMathOperator{\re}{Re}
\DeclareMathOperator{\im}{Im}
\DeclareMathOperator{\spa}{span}
\DeclareMathOperator{\SL}{SL}
\DeclareMathOperator{\GL}{GL}
\DeclareMathOperator{\U}{U}
\DeclareMathOperator{\SO}{SO}
\DeclareMathOperator{\D}{D}
\DeclareMathOperator{\image}{Im}
\DeclareMathOperator{\supp}{supp}
\DeclareMathOperator{\aff}{aff}
\newcommand{\R}{\mathbb{R}}
\newcommand{\C}{\mathbb{C}}
\newcommand{\N}{\mathbb{N}}
\newcommand{\Z}{\mathbb{Z}}
\newcommand{\K}{\mathcal{K}}
\renewcommand{\l}{\mathrm{length}}
\renewcommand{\d}{\mathrm{diam}}
\newcommand{\hH}{\mathcal{H}}
\newcommand{\cC}{\mathcal{C}}
\DeclareMathOperator{\diam}{\mathrm{diam}}
\renewcommand{\d}{\mathrm{diam}}
\newcommand{\Sa}{\mathcal{S}}
\DeclareMathOperator{\W}{\mathrm{W}}
\DeclareMathOperator{\V}{\mathrm{V}}
\def\lin{\mathop\mathrm{lin}\nolimits}
\def\aff{\mathop\mathrm{aff}\nolimits}
\def\conv{\mathop\mathrm{conv}\nolimits}
\def\cl{\mathop\mathrm{cl}\nolimits}
\def\relint{\mathop\mathrm{relint}\nolimits}
\def\cir{\mathrm{R}}
\def\inr{\mathrm{r}}
\newcommand{\dck}{\D_{C}K}
\newcommand{\func}[5]{\ensuremath{\begin{array}{cccl}
#1:&#2&\longrightarrow&#3\\&#4&\mapsto&#5\end{array}}}
\title[How do difference bodies look like? A geometrical approach.]
{How do difference bodies in complex vector spaces look like? A geometrical approach.}
\author{Judit Abardia} 
\address{Institut f\"ur Mathematik, Goethe-Universit\"at Frankfurt am Main, 
Robert-Mayer-Str. 10, 60054 Frankfurt, Germany}
\email{abardia@math.uni-frankfurt.de}
\author{Eugenia Saor\'in G\'omez} 
\address{Institut für Algebra und Geometrie, Universit\"at Magdeburg, 
Universitätsplatz 2, 39106 Magdeburg, Germany}
\email{eugenia.saorin@ovgu.de}
\begin{document}

\thanks{First author is partially supported by DFG grants AB 584/1-1 and BE 2484/3-1.
Second author is supported by Direcci\'on General 
de Investigaci\'on MTM2011-25377 MCIT and FEDER}

\date{\today}

\subjclass[2000]{Primary 52A20 
52B45; 
Secondary, 52A39 
52A40 
}

\keywords{convex body, complex difference body, Minkowski valuation, dimension, volume, polytope, spherical harmonic}

\begin{abstract}
We investigate geometrical properties and inequalities satisfied by the complex difference body, in the sense of studying which of the 
classical ones for the difference body have an analog in the complex framework.
Among others we give an equivalent expression for the support function of the complex difference body and prove that, unlike the classical case, 
the dimension of the complex difference body depends on the position of the body with respect to the complex structure of the vector space.
We use spherical harmonics to characterize the bodies for which the complex difference body is a ball, we prove that it is a polytope if and only if the two bodies involved in the construction are polytopes and provide 
several inequalities for classical magnitudes of the complex difference body, as volume, quermassintegrals 
and diameter, in terms of 
the corresponding ones for the involved bodies.
\end{abstract}

\maketitle

\section{Introduction}

Let $V$ denote a real vector space of dimension $n$ and $\mathcal{K}(V)$ the space of compact convex sets, shortly, convex bodies, in $V$, endowed with the Hausdorff metric.
The $n$-dimensional
volume of a set $M\subsetneq \R^n$, i.e., its $n$-dimensional Lebesgue
measure, is denoted by $\vol(M)$ (or $\vol_n(M)$ if the distinction of the
dimension is useful). 
For $K,L$ convex bodies, the vectorial or Minkowski sum $K+L$ is also a convex body. It is well known that the Minkowski sum in $\K(V)$ satisfies the cancellation law and behaves nicely with
dilations by positive scalars. 
The \emph{support function of $K\in\K(V)$ in the direction $\xi\in V^{*}$} is defined as $h(K,\xi)=\max\{\xi(x)\,:\,x\in K\}$. 
When $V$ is identified with $\R^n$ and endowed with the euclidean inner product $\langle\cdot,\cdot\rangle$, we denote by $B_n$, the euclidean unit ball and by $S^{n-1}$ its topological boundary.

Given a convex body $K$, its image under reflection in the origin is denoted by $-K$ and the set \[DK=K+(-K)=:K-K\] is a centrally symmetric convex body called
the {\it difference body of $K$}. 

The difference body, seen as an operator $D\!:\K(V)\to \K(V)$ enjoys many important properties. For instance, it is a \emph{Minkowski valuation}, namely, it satisfies $D(K\cup L)+D(K\cap L)=D(K)+D(L)$ whenever $K,L,K\cup L\in\K(V)$, and it is \emph{Minkowski additive}, since $D(K+L)=D(K)+D(L)$. Furthermore, it is \emph{continuous} (with respect to the Hausdorff topology), \emph{translation invariant} (i.e., $D(K+x)=D(K)$ for every $x\in V$) and \emph{$\SL(V,\R)$-covariant} (i.e., $Z(gK)=gZ(K)$, for every $g\in\SL(V,\R)$).

The so-called complex difference body has naturally appeared in the framework of the Minkowski valuation theory. 
Let $W$ be a complex vector space of complex dimension $m\geq 1$, $\alpha\in\C$ and $K\in\K(W)$. 
We denote by $\alpha K$ the convex body obtained under the action of the $m\times m$ complex diagonal matrix $\begin{pmatrix} \alpha &  \dots  &0 \\   & \ddots &  \\ 0 & \dots & \alpha \end{pmatrix}$ on $K$. For a convex body $C\subset\C$, we denote by $d\Sa(C,\cdot)$ the area measure of the convex body $C$ (see, for instance, \cite[Section 4.3]{schneider_book93} for a complete description of the area measure of a convex body).

\begin{definition}Let $W$ be a complex vector space of complex dimension $m \geq 1$ and $C$ a convex body in $\C$. The \emph{complex difference body operator $\D_{C}:\K(W)\to\K(W)$ with respect to $C$} is defined by 
\begin{equation} \label{eq_def_pi_c}
 h(\D_{C}\!K,\xi)=\int_{S^1}h(\alpha K,\xi)d\Sa(C,\alpha),\quad \xi \in W^*.
\end{equation}
\end{definition}

By simplicity, we will usually refer to complex difference body operator with respect to $C$ only by {\it complex difference body operator}. We notice that Equation \eqref{eq_def_pi_c} can also be written as 
\begin{equation}\label{e:DcK vectorial integration}
D_{C}K=\int_{S^1}\alpha K d\Sa(C,\alpha).
\end{equation}
We will also consider the operator $\D:\K(\C)\times\K(W)\to\K(W)$, defined by $\D(C,K)=\dck$, which will be again called complex difference body operator.

The following result characterizes the complex difference body operator for dimension $m\geq 3$. 

\begin{teor}[\cite{abardia12}]\label{thm_covariant}
Let $W$ be a complex vector space of complex dimension $m \geq 3$. A map $Z:\mathcal{K}(W) \to \mathcal{K}(W)$ is a continuous translation invariant and $\SL(W,\mathbb{C})$-covariant Minkowski valuation if and only if there exists a unique planar convex body $C$ with Steiner point $s(C)=0$ such that $Z=\D_C$.
\end{teor}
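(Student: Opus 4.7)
The plan is to prove both implications, with the direct one being routine and the converse carrying the content.

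For the direct direction, each property of $\D_C$ is checked from \eqref{e:DcK vectorial integration}. Continuity, Minkowski additivity, and the valuation property follow from linearity of the integral and the analogous properties of the support function; $\SL(W,\mathbb{C})$-covariance holds because $g(\alpha K) = \alpha(gK)$ for every $g \in \SL(W,\mathbb{C})$ and $\alpha \in \mathbb{C}$. Translation invariance is actually automatic for any planar body $C$: the identity $h(\alpha(K+x),\xi) = h(\alpha K,\xi) + \xi(\alpha x)$ reduces the question to $\int_{S^1}\xi(\alpha x)\,d\Sa(C,\alpha) = 0$, which holds because the area measure of any convex body has vanishing first moment. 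The role of the condition $s(C)=0$ is thus only to ensure \emph{uniqueness}: since $d\Sa(C+t,\cdot)=d\Sa(C,\cdot)$, the operator $\D_C$ depends on $C$ only up to translation, and $s(C)=0$ selects a canonical representative.

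For the converse, let $Z$ satisfy the four listed properties. The plan is to identify a candidate $C \in \K(\C)$ by applying $Z$ to a suitable seed body, and then prove $Z = \D_C$ on all convex bodies. As a seed I would consider $Z([0,e_1])$ for a fixed $e_1 \in W$, writing $\ell := \C e_1$. The stabilizer of $[0,e_1]$ in $\SL(W,\C)$ contains a copy of $\SL(W',\C)$ acting on the complex hyperplane $W'$ complementary to $\ell$, together with the transvections $e_j \mapsto e_j + z e_1$. For $m \geq 3$, this subgroup contains enough contractions and acts irreducibly on $W'$, so that a $Z$-image bounded and invariant under it must be contained in $\ell$. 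Translating to ensure $s=0$ and then inverting the relation $\D_C([0,1]) = C_0$ on a convenient seed (a segment or a disk in $\ell$) yields the candidate $C$.

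The remaining and principal task is to establish $Z = \D_C$ on all of $\K(W)$. The strategy is: first match $Z$ and $\D_C$ on complex segments (using $\SL(W,\C)$-covariance to reduce to segments in $\ell$ together with the construction of $C$); then extend to polytopes via the Minkowski valuation axiom, writing a polytope as a signed combination of simplices and reducing each simplex, by combining $\SL(W,\C)$-covariance with the complex scaling $K \mapsto \alpha K$, to a standard segment-built shape; finally use Hausdorff density of polytopes and continuity of both operators. The hardest step is precisely this polytope/simplex reduction: because $\SL(W,\C)$ sits strictly inside $\SL(W,\R)$, one cannot import Ludwig-type classifications for the real group, and the complex scalar action on $\K(W)$ must be used essentially to supply the missing degrees of freedom. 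The hypothesis $m \geq 3$ is indispensable throughout, both to confine $Z([0,e_1])$ to $\ell$ in the first step and to execute the simplex analysis in the second.
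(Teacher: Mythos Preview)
This theorem is not proved in the present paper: it is quoted from \cite{abardia12} (note the citation in the statement), and the paper uses it as background without reproducing its proof. So there is no ``paper's own proof'' to compare against here. What the paper \emph{does} contain is the easy direction in a slightly different guise: Proposition~\ref{prop_n2} together with Remark~\ref{mu} establishes that every $\D_C$ is a continuous, translation invariant, $\SL(W,\C)$-covariant Minkowski valuation, which matches the first paragraph of your proposal.

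On the substance of your sketch: the forward direction is fine, and your observation that translation invariance holds for \emph{every} $C$ (with $s(C)=0$ serving only to pin down uniqueness) is correct and well put. For the converse, your outline --- evaluate $Z$ on a segment, use the stabilizer to confine the image to the complex line, extract $C$, then propagate via covariance, the valuation property, and density of polytopes --- is indeed the architecture of the argument in \cite{abardia12}. However, what you present is a plan, not a proof: the step you yourself flag as ``the hardest'' (the simplex reduction under the smaller group $\SL(W,\C)$) is exactly where the work lies, and you have not carried it out. In particular, the passage from ``$Z$ and $\D_C$ agree on complex segments'' to ``they agree on all polytopes'' cannot be done by naively writing a polytope as a sum of segments (polytopes are generally not zonotopes), and the actual argument in \cite{abardia12} requires a careful Klain-type analysis of the restrictions to complex subspaces combined with McMullen's decomposition, which your sketch does not supply. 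So: right shape, but the central lemma is missing.
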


The motivation to study the previous classification result was given by Ludwig's characterization theorem for the difference body (and projection body) operator. More precisely, from \cite{ludwig05}, it follows that in a real vector space of dimension $n\geq 2$ the difference body is, up to positive constants, the unique continuous Minkowski valuation which is translation invariant and $\mathrm{SL}(V,\mathbb{R})$-covariant (see also \cite{ludwig02}).
In \cite{abardia13, abardia12,abardia.bernig}, the complex analog of Ludwig's result was studied, obtaining, in particular, Theorem \ref{thm_covariant}.

\smallskip
We remark that the ``usual'' difference body of a convex body $K$, i.e., $DK$, is indeed obtained as a particular case of $\D_{C}K$, namely for $C=[-i/2,i/2]$. (Here, and along the paper, we identify $\R^2\equiv \C$.) In fact, the unit normal vectors to $C$ are $(1,0)$ and $(-1,0)$, which correspond to $1$ and $-1$ in $\C$. Thus,   
the surface area measure of $C$ is $\Sa(C,\cdot)=\delta_{1}+\delta_{-1}$ and the statement follows.

In view of the previous remark and the natural definition of the complex difference body operator in a complex setting, we aim to shed light on the geometry of this newly defined operator. To achieve this objective, we study geometrical properties and inequalities satisfied by the complex difference body, comparing them with the well-known properties and inequalities of the real difference body, which we recall in the next lines.

Let $K\in\K(V)$ and $V$ be endowed with the euclidean inner product. 
It is easy to see that for every $u\in S^{n-1}$, $h(DK,u)=w(K,u)$, where $w(K,u):=h(K,u)+h(K,-u)$ denotes the \emph{width of $K$ in the direction $u$}.
As a consequence, $DK$ is a ball if and only if $K$ has constant width and $DK$ is homothetic to $K$ if and only if $K$ is centrally symmetric. We will denote centrally symmetric convex bodies by symmetric convex bodies.

The classical Brunn-Minkowski inequality 
yields that
\begin{equation}\label{e:lhs RS}
2^n \vol_n(K)\leq \vol_n(DK)
\end{equation}
with equality precisely if $K$ is centrally symmetric.
An upper bound for $\vol_n(DK)$ is given by the so called {\it Rogers-Shephard inequality}, which establishes that
\begin{equation}\label{e:rhs RS}
\vol_n(DK)\leq \binom{2n}{n}\vol_n(K)
\end{equation}
with equality precisely if $K$ is a simplex (see \cite{rogers.shephard} and \cite[Section 7.3]{schneider_book93}).

To study similar facts for the complex difference body operator, we have to remark, first of all, that the dimension of the complex difference body of $K$ depends on the position of $K\in\K(W)$ as a subset of $W$, that is, the dimension of $\dck$ is not rotation-invariant, except for the case in which $C$ is a segment. The following result 
gives the precise dimension of $\D_CK$.
\begin{theorem}\label{dimDC}Let $K\in\K(W)$ be a convex body of dimension $l$ contained in an $l$-dimensional subspace $E$. Let $a:=\dim_{\C}(E\cap JE)$. Then, for any $C\in\K(\C)$ with interior points, it holds
$$\dim\D_{C}K=2(l-a).$$ 
Here $J$ denotes a complex structure of the complex vector space $W$.
\end{theorem}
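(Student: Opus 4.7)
My plan is to read off the dimension of $\D_{C}K$ from the set of directions along which its width vanishes. First I will verify that $\D_{C}$ is translation invariant: every planar convex body $C$ satisfies the Minkowski balance condition $\int_{S^1}\alpha\,d\Sa(C,\alpha)=0$, so \eqref{e:DcK vectorial integration} gives $\D_{C}(K+v)=\D_{C}K+\bigl(\int_{S^1}\alpha\,d\Sa(C,\alpha)\bigr)v=\D_{C}K$ for every $v\in W$. Hence I may assume $0\in K$, so that $K\subseteq E$. Since $\dim_{\R}(E+JE)=2l-\dim_{\R}(E\cap JE)=2l-2a=2(l-a)$ matches the claimed value, the task reduces to showing that $\lin(\D_{C}K-\D_{C}K)=E+JE$.

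For any convex body $L\subset W$ the width $w(L,\xi):=h(L,\xi)+h(L,-\xi)$ is non-negative and vanishes exactly when $\xi$ annihilates $\lin(L-L)$. Applying this via \eqref{e:DcK vectorial integration} yields
\[
w(\D_{C}K,\xi)=\int_{S^1}w(\alpha K,\xi)\,d\Sa(C,\alpha),
\]
which vanishes if and only if $w(\alpha K,\xi)=0$ for $\Sa(C,\cdot)$-almost every $\alpha$. Since $0\in K$, the body $\alpha K$ is $l$-dimensional in $\alpha E$ (an affine hull passing through the origin), so $w(\alpha K,\xi)=0$ iff $\xi\in(\alpha E)^\perp=\alpha E^\perp$, using that multiplication by $\alpha\in S^1$ is an orthogonal transformation. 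Taking orthogonal complements gives
\[
\lin(\D_{C}K-\D_{C}K)=\spa_{\R}\bigcup_{\alpha\in\supp\Sa(C,\cdot)}\alpha E.
\]

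The core computation is the identification of this span with $E+JE$. Because $C$ has interior points, Minkowski's existence theorem forbids $\Sa(C,\cdot)$ from being supported on a single antipodal pair (otherwise $C$ would be a segment, with empty interior), so I can fix $\alpha_1,\alpha_2\in\supp\Sa(C,\cdot)$ with $\alpha_1\neq\pm\alpha_2$. Writing $\alpha_j=\cos\theta_j+i\sin\theta_j$, for each $v\in E$ one has $\alpha_j v=(\cos\theta_j)v+(\sin\theta_j)Jv$, and the $2\times 2$ change-of-basis matrix has determinant $\sin(\theta_2-\theta_1)\neq 0$. Hence $\spa_{\R}\{\alpha_1 v,\alpha_2 v\}=\spa_{\R}\{v,Jv\}$; letting $v$ run through a basis of $E$ gives $\spa_{\R}(\alpha_1 E\cup\alpha_2 E)=E+JE$. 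Any further $\alpha\in S^1$ contributes $\alpha E\subseteq E+JE$, so the full span equals $E+JE$ and therefore $\dim\D_{C}K=\dim(E+JE)=2(l-a)$.

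The main conceptual point—and the one where the hypothesis that $C$ has interior points is genuinely used—is ruling out an antipodal support of $\Sa(C,\cdot)$; everything else is pointwise linear algebra together with the non-negativity of widths. This also clarifies why the result fails for $C$ a segment: in that case only one rotation of $E$ contributes, which is consistent with the classical difference body, where $\dim DK=l$ independently of the position of $K$ in $W$.
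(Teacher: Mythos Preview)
Your proof is correct, and it follows a genuinely different route from the paper's.

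The paper first invokes the $\SL(W,\C)$-covariance of $\D_{C}$ to place $E$ in the normal form $\spa_{\R}\{e_{1},Je_{1},\dots,e_{a},Je_{a},e_{a+1},\dots,e_{l-a}\}$, then argues the two inequalities separately: for $\dim\D_{C}K\geq 2(l-a)$ it uses monotonicity of $\D_{C}$ on small segments $[-\epsilon e_{i},\epsilon e_{i}]\subset K$ together with a direct computation $h(\D_{C}L_{i},Je_{i})=\epsilon\int_{S^1}|\im\alpha|\,d\Sa(C,\alpha)>0$; for $\dim\D_{C}K\leq 2(l-a)$ it checks $h(\D_{C}K,v)=0$ on the complementary complex subspace. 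Your argument bypasses both the covariance reduction and the monotonicity trick: by reading the linear hull of $\D_{C}K$ off the zero set of its width function, you obtain the coordinate-free identity $\lin(\D_{C}K-\D_{C}K)=\spa_{\R}\bigcup_{\alpha\in\supp\Sa(C,\cdot)}\alpha E$ in one stroke, and then a $2\times 2$ determinant computation pins this down as $E+JE$. Your approach is shorter and yields the intrinsic description $\aff(\D_{C}K)=E+JE$ without ever choosing a basis; the paper's approach, on the other hand, makes the role of individual basis directions $e_{i},Je_{i}$ more visible and reuses tools (monotonicity, covariance) that are developed elsewhere in the paper anyway. Both proofs isolate the same place where ``$C$ has interior points'' is needed, namely to guarantee that $\supp\Sa(C,\cdot)$ is not a single antipodal pair.
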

This implies that a Rogers-Shephard's type inequality cannot hold for the complex difference body. However, we provide sharp (lower) bounds for 
the volume and other quermassintegrals of $\D_{C}K$ in terms of the quermassintegrals of both, $K$ and $C$ in Section \ref{s:geom ineq}.

The classical difference body of $K$ is a symmetric convex body. In the complex case, this is no longer true, in general. Indeed, the image of $\D_{C}$ is contained in the space of symmetric convex bodies if and only if $C$ itself is a symmetric one (see Corollary \ref{cor:osim}). In turn,
in Lemma \ref{l: width} we will prove that for any $C\in\K(\C)$ with $\l(C)=1$, 
the width of $\D_{C}K$ does not decrease whereas the diameter does not increase,
 which allows us to say, roughly, that the complex difference body of $K$ with respect to $C$ is closer to a
ball than $K$, for any $C$.

As mentioned above, $DK$ is a ball if and only if $K$ is a convex body of constant width. It is also well known that $DK$ is a polytope if and only if $K$ is also a polytope.
We address these questions and similar ones for the complex difference body proving, for instance, the following result.
\begin{theorem}\label{DcK polytope} 
Let $C\in\K(\C)$ and $K\in\K(W)$. Then, $\D_{C}K$ is a polytope if and only if both $C$ and $K$ are polytopes.
\end{theorem}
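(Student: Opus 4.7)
For the forward direction, if $C$ is a polygon then $d\Sa(C,\cdot)=\sum_{j=1}^{N}\ell_{j}\delta_{\alpha_{j}}$, a finite sum of Dirac masses at the outer unit normals to the edges of $C$ weighted by the edge lengths. Equation~\eqref{e:DcK vectorial integration} then reduces to the finite Minkowski sum $\D_{C}K=\sum_{j=1}^{N}\ell_{j}\alpha_{j}K$, a sum of rotated copies of the polytope $K$, so $\D_{C}K$ is itself a polytope.

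For the converse I argue in two steps. \emph{Step 1: reduce to the planar case to show $C$ is a polygon.} For any complex line $L\subset W$, both $L$ and $L^{\perp}$ are $S^{1}$-invariant, so a direct support-function calculation yields the intertwining relation $\D_{C}K|L=\D_{C}(K|L)$; the left-hand side is the orthogonal projection of a polytope and is hence a polygon in $L\cong\C$. In the plane the surface area measure is Minkowski linear and rotation equivariant, so passing $\Sa$ through the defining Minkowski integral gives the convolution identity
\[
\Sa(\D_{C}(K|L),\cdot)=\Sa(C,\cdot)\ast\Sa(K|L,\cdot)
\]
of finite positive measures on the group $S^{1}$. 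The left-hand side is purely atomic; splitting each factor on the right into atomic and non-atomic parts, the non-negativity of the four resulting cross-terms rules out cancellations and forces $\mu\ast\nu$ to be purely atomic only when both $\mu$ and $\nu$ are. Choosing $L$ so that $K|L$ has positive diameter (possible whenever $K$ is not a single point) makes $\Sa(K|L,\cdot)$ non-zero, so $\Sa(C,\cdot)$ must be discrete, i.e., $C$ is a polygon.

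\emph{Step 2: deduce that $K$ is a polytope.} Since $\Sa(C,\cdot)=\sum_{j=1}^{N}\ell_{j}\delta_{\alpha_{j}}$, we now have $\D_{C}K=\sum_{j=1}^{N}\ell_{j}\alpha_{j}K$ and
\[
h(\D_{C}K,\xi)=\sum_{j=1}^{N}\ell_{j}\,h(K,\xi\circ\alpha_{j}),
\]
a positive linear combination of convex functions on $W^{*}$ obtained from $h(K,\cdot)$ by precomposition with rotations of $W^{*}$. Polytopality of $\D_{C}K$ means the left-hand side is piecewise linear on $W^{*}$, i.e., linear on each cone of a finite polyhedral partition. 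On any convex cone on which a positive combination of convex functions is affine, each summand must itself be affine (a convex function equal to an affine function minus a convex function is also concave, hence affine). Reversing the invertible linear change of variable $\eta=\xi\circ\alpha_{j}$ then gives that $h(K,\cdot)$ is piecewise linear, so $K$ is a polytope.

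The delicate step is Step 1: once $C$ is known to be polygonal, Step 2 is a routine convexity argument, whereas detecting the polygonality of $C$ from that of $\D_{C}K$ combines three ingredients --- projection-equivariance to complex lines (which reduces the problem to the plane), Minkowski-linearity of the planar surface area measure (which turns the defining Minkowski integral into a convolution on $S^{1}$), and the fact that a convolution of positive measures on $S^{1}$ is purely atomic only if both factors are.
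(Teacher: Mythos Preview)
Your overall strategy matches the paper's: project onto a complex line to reduce to the plane, use the convolution identity $\Sa(\D_C K',\cdot)=\Sa(C,\cdot)*\Sa(K',\cdot)$ there to force $C$ to be a polygon, and then read off that $K$ is a polytope from the finite Minkowski sum $\D_C K=\sum_j \ell_j\alpha_j K$. One point in Step~1 needs repair, however: your atomic/non-atomic splitting argument shows only that $\Sa(C,\cdot)$ is \emph{purely atomic}, not that it has \emph{finite} support --- and there exist non-polygonal planar convex bodies whose surface area measure is a countably infinite sum of point masses, so pure atomicity alone does not give ``$C$ is a polygon''. The paper avoids this by invoking $\supp(\mu*\nu)=\cl(\supp\mu+\supp\nu)$ for positive measures: since the left side is finite (as $\D_C(K|L)$ is a polygon), so is the sumset $\supp\mu+\supp\nu$, and hence each factor. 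You could also finish along your own lines: once both measures are known to be atomic, fix any atom $b$ of $\nu$ with positive mass; if $\mu$ had infinitely many atoms $\{x_i\}$, the points $x_i+b$ would give infinitely many atoms of $\mu*\nu$, contradicting its finite support.

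Your Step~2 is a nice self-contained alternative to the paper's approach. The paper simply quotes the structural fact (from Gr\"unbaum) that any Minkowski summand of a polytope is a polytope; your piecewise-linearity argument --- a positive sum of convex functions that is linear on a cone forces each summand to be linear there --- is in effect an ad hoc proof of that fact in the case at hand, and is arguably more transparent here.
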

As a consequence we obtain the following
\begin{corollary}
The map $\D\!: \K(\C)\times\K(W)\to \K(W)$ is neither injective nor surjective.
\end{corollary}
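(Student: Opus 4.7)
To prove the corollary I would handle non-injectivity and non-surjectivity separately, each by a short explicit obstruction.

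For the non-injectivity, the translation invariance of $\D_C$ in its second argument is already enough: for any $v\in W$ with $v\neq 0$ the distinct pairs $(C,K)$ and $(C,K+v)$ in $\K(\C)\times\K(W)$ produce the same body $\D_CK$. An even more dramatic instance is obtained by taking $C$ to be any singleton in $\C$, so that $\Sa(C,\cdot)\equiv 0$ and $\D_CK=\{0\}$ for every $K\in\K(W)$; thus $\{0\}$ has uncountably many preimages.

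For the non-surjectivity, the plan is to exhibit a singleton $L=\{p\}$ with $p\in W$, $p\neq 0$, outside the image of $\D$. The key intermediate step is the identity $s(\D_CK)=0$, valid for every $C\in\K(\C)$ and $K\in\K(W)$, where $s\!:\K(W)\to W$ denotes the Steiner point. Since $s(\{p\})=p\neq 0$, this identity rules out $L$ being equal to some $\D_CK$.

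The identity $s(\D_CK)=0$ would be obtained by interchanging $s$ with the Minkowski integration over $S^1$ in the vector-valued representation \eqref{e:DcK vectorial integration} and then exploiting the rotation equivariance $s(\alpha K)=\alpha s(K)$:
\begin{equation*}
s(\D_CK)=\int_{S^1}s(\alpha K)\,d\Sa(C,\alpha)=\Bigl(\int_{S^1}\alpha\,d\Sa(C,\alpha)\Bigr)s(K)=0,
\end{equation*}
the final equality being the classical fact that the first vectorial moment of the surface area measure of a convex body vanishes.

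The only step that is more than formal is the justification of interchanging $s$ with the Minkowski integral; this follows by approximating $\Sa(C,\cdot)$ on $S^1$ by discrete measures, for which the interchange reduces to finite Minkowski additivity of $s$, and then passing to the limit via continuity of $s$ in the Hausdorff metric.
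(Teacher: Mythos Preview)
Your proof is correct, but it takes a different route from the paper's, particularly for non-surjectivity. The paper deduces the corollary from Theorem~\ref{DcK polytope}: it takes a scalene triangle $T$, observes that if $\D_CK=T$ then both $C$ and $K$ must be polytopes, writes $T=\sum_i r_i\alpha_iK$, and uses indecomposability of $T$ to force each summand to be homothetic to $T$; the lack of rotational symmetry of a scalene triangle then yields a contradiction. Your argument instead rests on the identity $s(\D_CK)=0$, which you obtain directly from the vanishing first moment of the surface area measure; the paper also records this identity (in the Remark following Corollary~\ref{corCoeff}), but derives it via the harmonic expansion. Your approach is more elementary and self-contained---it needs neither Theorem~\ref{DcK polytope} nor spherical harmonics---and it immediately excludes every body with nonzero Steiner point from the image. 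The paper's approach, on the other hand, yields a sharper obstruction: a scalene triangle translated to have Steiner point at the origin is still shown to lie outside $\image\D$, something your Steiner-point argument cannot detect. For non-injectivity both arguments are essentially trivial; you invoke translation invariance in the second slot, while the paper simply points to the classical difference body case $C=[-i/2,i/2]$.
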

This is the analog to the well known fact that the classical difference body operator is neither injective nor surjective on $\K(V)$. We further study the range of injectivity of $\D_{C}$ for a fixed $C$.

We characterize the pairs $(C,K)$ for which $\D_{C}K$ is a ball. In order to do so, we calculate the harmonic expansion of the support function of $\D_CK$,
which is given in terms of the Fourier coefficients of the harmonic expansion of the support functions of $C$, $c_{j}(h_{C})$, and $K$, $\pi_{j,l}(h_{K})$, for $j,l\geq 0$ (see Sections \ref{s: sph harm} and  \ref{s:multiplier} for precise definitions and details).
\begin{corollary}Let $C\in\K(\C)$ and $K\in\K(W)$. Then, $\D_{C}K$ is a ball if and only if  for every $(j,l)\in\N^2\backslash \{(0,0)\}$, either $c_{j-l}(h_{C})=0$ or $\pi_{j,l}(h_{K})=0$.
\end{corollary}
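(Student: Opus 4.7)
The plan is to combine two ingredients: the classical characterization of balls via spherical harmonics, and the multiplier formula for $\D_C$ developed in Section \ref{s:multiplier}.

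First I will verify that $\D_C K$ is automatically centered at the origin, so that for bodies in the image of $\D_C$ the notions of ``ball'' and ``ball centered at the origin'' coincide. Since the Steiner point is Minkowski additive and continuous, from the integral representation \eqref{e:DcK vectorial integration} and the equality $s(\alpha K)=\alpha\,s(K)$ for $\alpha\in S^1$ (as $\alpha$ acts by an isometry on $W$), I obtain
\[
s(\D_C K)=s(K)\int_{S^1}\alpha\,d\Sa(C,\alpha)=0,
\]
by the Minkowski relation $\int_{S^1}\alpha\,d\Sa(C,\alpha)=0$ in $\R^2\cong \C$.

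Next I will invoke the classical fact that a convex body $M\subset W$ is a ball centered at the origin if and only if $h(M,\cdot)$ is constant on $S^{2m-1}$; equivalently, $\pi_{j,l}(h_M)=0$ for every $(j,l)\in\N^2\backslash\{(0,0)\}$ in the $\U(m)$-decomposition of $L^2(S^{2m-1})$ into irreducible components of bidegree $(j,l)$.

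Finally, I will apply the multiplier identity from Section \ref{s:multiplier}, which, for every $(j,l)\in\N^2$, expresses $\pi_{j,l}(h_{\D_C K})$ as a non-zero multiple of the product $c_{j-l}(h_C)\,\pi_{j,l}(h_K)$. Combining the three ingredients, $\D_C K$ is a ball if and only if $c_{j-l}(h_C)\,\pi_{j,l}(h_K)=0$ for every $(j,l)\neq(0,0)$, which is exactly the claim. The main obstacle, namely the multiplier formula itself together with the non-vanishing of the associated constants, is precisely what is developed in Section \ref{s:multiplier}; once it is available, the corollary reduces to the short chain described above.
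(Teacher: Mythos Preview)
Your argument is essentially the paper's own: it simply invokes Example~\ref{examples}.\ref{coeffBalls} (the harmonic characterization of centered balls) together with Corollary~\ref{corCoeff}, and the remark after Corollary~\ref{corCoeff} that $s(\D_C K)=0$, exactly as you do. One small slip to fix: the constant in Corollary~\ref{corCoeff} is $1-(j-l)^2$, which \emph{vanishes} when $|j-l|=1$, so your phrase ``non-zero multiple of $c_{j-l}(h_C)\,\pi_{j,l}(h_K)$'' is not correct for those bidegrees. This does not affect the ``if'' direction, and for the ``only if'' direction one observes that $\D_C K$ is unchanged under translations of $C$, so one may normalize $s(C)=0$, whence $c_{\pm 1}(h_C)=0$ and the condition at $|j-l|=1$ is automatically satisfied; alternatively, just note that the bidegrees with $|j-l|=1$ play no role on either side.
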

The previous corollary implies that if $C$ is a symmetric convex body with $c_{2j}\neq 0$ for every $j\in\Z$, 
then $\dck$ is a ball if and only if $K$ is of constant width, 
recovering the classical case. 
The same ideas allow us to characterize when $\D_CK $ is of constant width, symmetric or $S^1$-invariant (see Corollaries \ref{cor:cw}, \ref{cor:rtinv} and \ref{cor:osim}).

A convex body $K\subset W$ is said to be \emph{$S^1$-invariant} if $\alpha K=K$ for every $\alpha\in S^1$. The $S^1$-invariant convex bodies are also called $R_{\theta}$-invariant or equilibrated bodies. This class of bodies is well-adapted to the complex structure of the ambient space (see, for instance, \cite{koldobsky_koenig_zymonopoulou08,koldobsky.paouris.zymonopoulu,rubin10}) and shall be used along the paper. Related results concerning convex bodies or valuations in a complex vector space can be found in \cite{alesker03,bernig_fu_hig,fu06,koldobsky_koenig_zymonopoulou08,koldobsky.paouris.zymonopoulu,rubin10,wannerer13}.

The paper is organized as follows. In Section \ref{s: func prop} we give some equivalent descriptions of the complex difference body, and prove some auxiliary results as well as Theorems \ref{dimDC} and \ref{DcK polytope}. Section
\ref{s:geom ineq} is devoted to prove several inequalities involving the complex difference body and magnitudes such as quermassintegrals, circumradius and diameter. In Section \ref{s: sph harm} we introduce the necessary notation
and results from the theory of spherical harmonics, which will be used later on. In Section \ref{s:multiplier} we prove that the extension of the complex difference body operator to the space of continuous functions on the sphere is a multiplier transformation (in the context of spherical harmonics). As a consequence of this, we provide the coefficients of the harmonic expansion of the support function of the complex difference body. Section \ref{s: fixed points} concerns the study of the fixed points of the complex difference body operator and the relation of the latter to some particular classes of convex bodies, such as universal convex bodies or $M$-classes. In the last section we deduce Corollaries 1.3 and 1.4, as well as some other related properties of $\dck$.

\section{Functional properties of $\D_{C}$}\label{s: func prop}

In this section we study some functional properties of the complex difference body. 
From now on, $V$ denotes always a real vector space of dimension $n$ and $W$ a complex vector space of complex dimension $m$. We will often identify $W$ with a real vector space of dimension $2m$. If a basis of $W$ is fixed, then we use the following map for the identification
\begin{equation}\label{idWV}
(w_{1},\dots,w_{m})=(w_{11}+iw_{12},\dots,w_{m1}+iw_{m2})\mapsto(w_{11},w_{12},\dots,w_{m1},w_{m2}).
\end{equation}
By $V^*$ and $W^*$ we denote the dual vector spaces of $V$ and $W$, respectively.

First we deal with equivalent constructions or descriptions of the complex difference body. The next proposition provides an equivalent expression for the operator $\D_{C}$ (cf. \cite{abardia.bernig}). 

\begin{proposition}
\label{prop_n2}
Let $\mu:\K(\C)\to\R$ be a continuous, translation invariant, monotone valuation homogeneous of degree $1$. Then the operator $Z:\K(W) \to \K(W)$ defined by
\begin{displaymath}
 h(ZK,\xi)=\mu(\xi(K)),\quad K \in \K(W),\, \xi\in W^*
\end{displaymath}
is a continuous, translation invariant, $\SL(W,\C)$-covariant Minkowski valuation. $\xi(K)$ is defined as $\xi(K):=\{z\in\C\,|\,z=\xi(k),k \in K\}$ and is a convex body in $\mathbb{C}$. 
\end{proposition}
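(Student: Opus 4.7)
The plan is to verify, one at a time, the asserted properties of $Z$: (a) that $\mu(\xi(K))$ indeed defines a support function, so $Z$ is well defined, (b) translation invariance, (c) $\SL(W,\C)$-covariance, (d) continuity, and (e) the Minkowski valuation property. The main obstacle will be step (a), specifically establishing Minkowski additivity of $\mu$.

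For (a), first note that $\xi:W\to\C$ is $\C$-linear and continuous, hence $\xi(K)$ is compact and convex, i.e., a convex body in $\C$, and $\mu(\xi(K))\in\R$ is well defined. To check that $\xi\mapsto\mu(\xi(K))$ is the support function of some convex body, I verify positive homogeneity and sublinearity. Positive homogeneity is immediate from $(\lambda\xi)(K)=\lambda\xi(K)$ for $\lambda\geq 0$ and the $1$-homogeneity of $\mu$. For sublinearity, the pointwise inclusion
$$(\xi_1+\xi_2)(K)\subseteq \xi_1(K)+\xi_2(K)$$
(Minkowski sum) together with monotonicity of $\mu$ gives $\mu((\xi_1+\xi_2)(K))\leq \mu(\xi_1(K)+\xi_2(K))$, so it remains to argue $\mu(A+B)=\mu(A)+\mu(B)$ for $A,B\in\K(\C)$. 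This is where the hypotheses on $\mu$ enter decisively: by the classical structure theorem, a continuous, translation invariant, monotone, degree $1$ valuation on $\K(\R^2)$ is a mixed volume $\mu(L)=V(L,M)$ with a fixed convex body $M$, and mixed volumes are Minkowski linear in each argument.

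Steps (b), (c), (d) are short. For translation invariance, $\xi(K+x)=\xi(K)+\xi(x)$ is a translate in $\C$, and $\mu$ is translation invariant. For $\SL(W,\C)$-covariance, given $g\in\SL(W,\C)$ the composition $\xi\circ g$ is again an element of $W^*$, and
$$h(Z(gK),\xi)=\mu((\xi\circ g)(K))=h(ZK,\xi\circ g)=h(gZK,\xi).$$
For continuity, the map $\K(W)\to\K(\C)$, $K\mapsto \xi(K)$, is Hausdorff continuous, uniformly in $\xi$ ranging over the unit sphere of $W^*$; composing with the continuous functional $\mu$ yields uniform convergence of the support functions of $ZK_n$ to those of $ZK$, hence Hausdorff continuity of $Z:\K(W)\to\K(W)$.

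For (e), on the level of support functions, the Minkowski valuation identity $Z(K\cup L)+Z(K\cap L)=Z(K)+Z(L)$ reduces, under the assumptions $K,L,K\cup L\in\K(W)$, to
$$\mu(\xi(K\cup L))+\mu(\xi(K\cap L))=\mu(\xi(K))+\mu(\xi(L)).$$
The identity $\xi(K\cup L)=\xi(K)\cup\xi(L)$ is automatic, and this set is convex as the linear image of the convex set $K\cup L$, so that the valuation property of $\mu$ can be applied once the key set-theoretic identity $\xi(K\cap L)=\xi(K)\cap\xi(L)$ is established. Only the inclusion $\xi(K)\cap\xi(L)\subseteq\xi(K\cap L)$ is nontrivial: given $z\in\xi(K)\cap\xi(L)$, pick preimages $k\in K$, $l\in L$ with $\xi(k)=\xi(l)=z$; the segment $[k,l]$ lies in the convex set $K\cup L$, starts in the closed set $K$ and ends in the closed set $L$, so meets $K\cap L$ at some point $m$, and $\xi(m)=z$ follows from the $\C$-linearity of $\xi$ together with $\xi(k)=\xi(l)$. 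The valuation property of $\mu$ then finishes the proof. As noted, the delicate ingredient is the Minkowski additivity of $\mu$ in step (a); the remaining verifications combine the definition of $Z$ with the corresponding property of $\mu$ in a straightforward manner.
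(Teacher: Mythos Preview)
Your proof is correct and follows essentially the same route as the paper: both verify that $\xi\mapsto\mu(\xi(K))$ is sublinear via monotonicity plus Minkowski additivity of $\mu$, and then check the valuation, covariance, continuity, and translation-invariance properties directly from the definition. Two minor differences: the paper deduces Minkowski additivity of $\mu$ from the general fact that continuous translation-invariant $1$-homogeneous valuations are Minkowski additive (Goodey--Weil), rather than from a mixed-volume representation, and the paper simply asserts $\xi(K\cap L)=\xi(K)\cap\xi(L)$ without the segment argument you supply---your justification of that identity is a welcome addition.
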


\begin{proof}
As $K$ is a convex body in $W$ and $\xi$ is linear, we have that $\xi(K)$ is also a convex body. Moreover, the function $\xi\mapsto \mu(\xi(K))$ is 1-homogeneous in $\xi$. It is also subadditive, and hence the support function of some convex body $ZK\subset W$. 
 Indeed, we have $(\xi_{1}+\xi_{2})(K)\subset \xi_{1}(K)+\xi_{2}(K),$ hence the monotonicity of $\mu$ gives 
$$\mu((\xi_{1}+\xi_{2})(K))\leq \mu(\xi_{1}(K)+\xi_{2}(K))=\mu(\xi_{1}(K))+\mu(\xi_{2}(K)),$$
where the last equality holds since $\mu$ has degree of homogeneity 1, and, thus it is Minkowski additive (cf. \cite[Theorem 3.2]{goodey_weil84}).

To prove the valuation property, let $K,L, K \cup L \in \K(W)$. Then, since $\mu$ is a valuation,  
\begin{align*}
\mu(\xi(K \cup L))+&\mu(\xi(K \cap L))  =\mu(\xi(K) \cup \xi(L)) + \mu(\xi(K) \cap \xi(L)) = \mu(\xi(K))+\mu(\xi(L)).
\end{align*}
The translation invariance and the continuity of $Z$ follow from the linearity of $\xi$ and the corresponding properties of $\mu$.
Moreover, $Z$ is $\SL(W,\C)$-covariant, since for every $g \in \SL(W,\C)$ and $\xi \in W^*$ it holds, from the properties of the support function (see \cite{schneider_book93}),
$$h(ZgK,w)=\mu(\xi(gK))=\mu((g^*\xi)(K))=h(ZK,g^*\xi)=h(gZK,\xi).$$
\end{proof}

\begin{remark}\label{mu} The examples provided by the previous proposition are (monotone) Minkowski valuations satisfying the properties of Theorem \ref{thm_covariant}. Thus, given $\mu$ there exists a convex body $C$ with $h(\D_{C}\!K,\xi)=\mu(\xi(K))$.
\end{remark}

The monotone valuation $\mu$ appearing above can be given more explicitly using mixed volumes of planar convex bodies. This gives a more geometric expression for the support function of the complex difference body, as the following proposition shows.

\begin{proposition}\label{p: h eq vol mixto}For $K\in\K(W)$ and $C\in\K(\C)$, the following equality holds
\begin{equation}\label{V2}
h(\D_{C}K,\xi)=V_{2}(\xi(K),\overline C).
\end{equation}

If $W$ is endowed with a hermitian scalar product, then 
$$h(\D_{C}K,\xi)=V_{2}(K|\xi_{\C},\overline C),$$
where $K|\xi_{\C}$ denotes the projection of $K$ onto the 2-dimensional real space in $W$ generated by $\{\xi,-J\xi\}$, identified with $\C$.

If $m=1$, then for $\xi\in S^1$, 
\begin{equation}\label{V2Rotation}
h(\D_{C}K,\xi)=V_{2}(R_{\overline{\xi}}K,\overline C),
\end{equation}
where $R_{\xi}$ denotes the rotation of $K$ defined by $\xi$.
\end{proposition}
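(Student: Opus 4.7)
The approach is to reinterpret the integrand in the defining integral \eqref{eq_def_pi_c} so that the right-hand side becomes a Minkowski-type representation of a planar mixed volume. Since $\xi\in W^{*}$ is $\C$-linear, $\xi(\alpha k)=\alpha\xi(k)$ for every $\alpha\in\C$, and hence
$$h(\alpha K,\xi)=\max_{k\in K}\re(\xi(\alpha k))=\max_{z\in\xi(K)}\re(\alpha z).$$
Identifying $\C\cong\R^{2}$ through $\langle z,u\rangle=\re(z\bar u)$, the last quantity is the planar support function $h(\xi(K),\bar\alpha)$.

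Plugging this identity into \eqref{eq_def_pi_c} and transferring the conjugation from the direction onto the body, I use that the surface area measure is covariant under isometries of $\C$. For the conjugation $z\mapsto\bar z$ this reads $d\Sa(\overline{C},\beta)=d\Sa(C,\bar\beta)$, so the substitution $\beta=\bar\alpha$ yields
$$h(\D_{C}K,\xi)=\int_{S^{1}}h(\xi(K),\beta)\,d\Sa(\overline{C},\beta).$$
By the Minkowski integral representation of the planar mixed volume (see e.g.\ \cite[Sec.~5.1]{schneider_book93}), the right-hand side equals $V_{2}(\xi(K),\overline{C})$ in the normalization used in the paper, proving \eqref{V2}.

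For the identity involving $K|\xi_{\C}$, I equip $W$ with the given Hermitian inner product, represent $\xi$ as $\xi(w)=\langle w,\tilde\xi\rangle$ for some $\tilde\xi\in W$, and observe that $\xi$ factors as the orthogonal projection onto the complex line $\C\tilde\xi=\spa_{\R}\{\tilde\xi,-J\tilde\xi\}$ followed by an isometric identification of that line with $\C$. Under this isometry $\xi(K)\cong K|\xi_{\C}$, and since $V_{2}$ is invariant under isometries of the ambient plane, the assertion follows from \eqref{V2}. For $m=1$, the same Hermitian pairing yields $\xi(w)=w\bar\xi$, hence $\xi(K)=\bar\xi K=R_{\bar\xi}K$, and \eqref{V2Rotation} is the corresponding specialization.

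The main bookkeeping obstacle is carefully tracking the conjugations: the anti-linearity in the second slot of the Hermitian pairing is exactly what produces the $\overline{C}$ appearing in \eqref{V2} and the $R_{\bar\xi}$ appearing in \eqref{V2Rotation}. Once the conventions for the planar support function and for the normalization of $V_{2}$ are fixed, each of the three identities is a one-line consequence of the rewriting $h(\alpha K,\xi)=h(\xi(K),\bar\alpha)$ carried out in the first step.
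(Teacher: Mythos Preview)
Your argument is correct and follows essentially the same route as the paper: both proofs rewrite the integrand via the identity $h(\alpha K,\xi)=h(\xi(K),\bar\alpha)$, then pass the conjugation from the direction onto the area measure to obtain $\int_{S^1}h(\xi(K),\beta)\,d\Sa(\overline{C},\beta)=V_{2}(\xi(K),\overline{C})$, and finally interpret $\xi(K)$ as an orthogonal projection (Hermitian case) or a rotation ($m=1$). The only cosmetic difference is that the paper inserts the intermediate step $h(\alpha K,\xi)=h(K,\bar\alpha\xi)$ before passing to $h(\xi(K),\bar\alpha)$, whereas you go there directly.
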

\begin{proof}
By definition, we have
\begin{align*}h(\D_{C}\!K,\xi)&=\int_{S^1}h(\alpha K,\xi)d\Sa(C,\alpha)=\int_{S^1}h(K,\overline{\alpha}\xi)d\Sa(C,\alpha)\\&=\int_{S^1}h(\xi(K),\alpha)d\Sa(\overline C,\alpha)=V_{2}(\xi(K),\overline C),\end{align*}
where we have used
\begin{align*}h(K,\overline{\alpha}\xi)&=\sup_{k\in K}\{\re (\overline{\alpha}\xi)(k)\}=\sup_{k\in K}\{\re(\overline{\alpha}(\xi(k)))\}=\sup_{l\in \xi(K)}\{\re(\overline{\alpha}(l))\}\\&=h(\xi(K),\overline{\alpha}),\end{align*}
with $\xi(K)=\{z\in\C\,|\,z=\xi(k),k \in K\}$, as in Proposition \ref{prop_n2}.

Assume that $W$ is endowed with a hermitian scalar product
$(\cdot,\cdot)$. With the identification of $\C^m$ with $\R^{2m}$ given in \eqref{idWV}, there exists a scalar product $\langle\cdot,\cdot\rangle$ in $\R^{2m}$ for which 
$$(x,y)=\langle x,y\rangle +i\langle -Jx,y\rangle.$$
Therefore,
$$\xi(k)=(\xi,k)=\langle\xi,k\rangle+i\langle -J\xi,k\rangle\in\C,$$
and $\xi(k)\xi\in\C^m$ is the orthogonal projection of $k$ onto the
subspace generated by $\xi$ and $-J\xi$.

For the case $m=1$ it remains to prove that $\xi(K)=R_{\overline{\xi}}(K)$. This follows from
$$\xi(K)=\{\xi(k)\,:\,k\in K\subset\K(\C)\}=\{(\xi_{1}k_{1}-\xi_{2}k_{2})+i(\xi_{1}k_{2}+\xi_{2}k_{1})\}=R_{\overline{\xi}}K.$$
\end{proof}

Using the above description of the support function of $\D_{C}K$ we obtain the following result.

\begin{corollary}\label{c:monotonicity}
The operator $\D\!:\K(\C)\times \K(W)\to \K(W)$ is monotone in its two variables.
\end{corollary}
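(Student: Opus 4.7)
The plan is to read off monotonicity directly from the mixed-volume expression established in Proposition~\ref{p: h eq vol mixto}, namely
\[
 h(\D_{C}K,\xi)=V_{2}(\xi(K),\overline{C}),\qquad \xi\in W^{*}.
\]
Since a convex body is determined by its support function and the inclusion $L\subset L'$ is equivalent to $h(L,\xi)\le h(L',\xi)$ for all $\xi$, it suffices to check that the right-hand side is monotone in each of $K$ and $C$.

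For monotonicity in the first variable, fix $C\in\K(\C)$ and suppose $K\subset K'$ in $\K(W)$. For any $\xi\in W^{*}$, linearity of $\xi$ gives $\xi(K)\subset\xi(K')$ in $\K(\C)$. The mixed volume $V_{2}$ on $\K(\C)$ is monotone in each entry, so $V_{2}(\xi(K),\overline{C})\le V_{2}(\xi(K'),\overline{C})$, that is, $h(\D_{C}K,\xi)\le h(\D_{C}K',\xi)$. Hence $\D_{C}K\subset\D_{C}K'$.

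For monotonicity in the second variable, fix $K\in\K(W)$ and suppose $C\subset C'$ in $\K(\C)$. Then also $\overline{C}\subset\overline{C'}$, and by monotonicity of $V_{2}$ we obtain $V_{2}(\xi(K),\overline{C})\le V_{2}(\xi(K),\overline{C'})$ for every $\xi$, which gives $\D_{C}K\subset\D_{C'}K$.

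There is no real obstacle here: the only input needed beyond Proposition~\ref{p: h eq vol mixto} is the standard fact that planar mixed volume is monotone in each argument (a direct consequence of the integral representation $V_{2}(A,B)=\tfrac{1}{2}\int_{S^{1}}h(A,\alpha)\,d\Sa(B,\alpha)$ together with the non-negativity of the support function of a convex body containing the origin, or more generally from the fact that mixed volumes of convex bodies are monotone). The argument is essentially a one-line consequence of the formula \eqref{V2}.
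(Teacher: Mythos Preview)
Your argument is correct and is exactly the intended one: the paper records the corollary as an immediate consequence of the mixed-volume formula \eqref{V2}, and you have simply made the standard monotonicity-of-$V_2$ reasoning explicit. (One cosmetic slip: in $\D(C,K)$ the first variable is $C$ and the second is $K$, so your two paragraphs have the labels ``first'' and ``second'' interchanged.)
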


Now we prove Theorem \ref{dimDC}.
\begin{proof}[Proof of Theorem \ref{dimDC}]
Let  $g\in\SL(W,\C)$, $K$ an $l$-dimensional convex body in $W$ and  
$E=\aff(K)$ the real affine hull of $K$. As $\dim\D_{C}gK=\dim
g\D_{C}K=\dim\D_{C}K$, and $\SL(W,\C)$ acts transitively on the
subspaces of dimension $l$ having $\dim(E\cap JE)$ fixed, we can suppose that
\[E=\spa_{\R}\{e_{1},Je_{1},\dots,e_{a},Je_{a},e_{a+1},e_{a+2},\dots,e_{l-a}\}\]
for some vectors $e_{1},\dots,e_{l-a}$ linearly independent over
$\C$.

We claim that for any direction $e_{i}\in E$, the direction $Je_{i}$ belongs to $\aff(\dck)$, the minimal subspace in $W$ containing $\dck$. Indeed, without loss of generality, we can assume that $0\in\relint(K)$, where $\relint(K)$ denotes the relative interior of $K$. 
Thus, there exists $\epsilon>0$ such that $L_{i}:=[-\epsilon e_{i},\epsilon e_{i}]\subset K$ for every $i=1,\dots, l-a$. Using the monotonicity of $\D_{C}$, we have that $\D_{C}L_{i}\subset\dck$. Hence, it suffices to prove that there exists $\lambda>0$ for which $\lambda Je_{i}\in \D_{C}L_{i}$ to have that $Je_{i}\in\aff(\dck)$. From the monotonicity of the support function, this is equivalent to show that $0<\lambda\leq h(\D_{C}L_{i},Je_{i})$ (since $0$ is in the relative interior of $\D_{C}L_i$). 

The nonnegativity of the area measure of $C$ yields
$$h(\D_{C}L_{i},Je_{i})=\epsilon\int_{S^1}|\langle \alpha e_{i},Je_{i}\rangle|d\Sa(C,\alpha)=\epsilon\int_{S^{1}}|\im(\alpha)|d\Sa(C,\alpha)\geq 0,$$
with equality if and only if $\Sa(C,\cdot)$ is concentrated on $1,-1$, that is, if and only if, $C$ is an interval on the imaginary axis of $\C\equiv\R^2$.

Thus, the claim follows for every convex body $C\in\K(\C)$ with interior points. An analogous argument shows that for every $e_{i}\in E=\aff(K)$, it also holds $e_{i}\in\aff(\dck)$, $C$ with interior points. Therefore, we get 
$$\dim\D_{C}K\geq 2a+2(l-2a)=2(l-a),$$
with $2a$ the number of directions
$\{e_{1},Je_{1},\dots,e_{a},Je_{a}\}$ and $l-2a$ the number of
directions $\{e_{a+1},\dots,e_{l-a}\}$.

Let $\tilde E:=\{e_{1},Je_{1},\dots,e_{l-a},Je_{l-a}\}$ and $F:=\{e_{l-a+1},Je_{l-a+1},\dots,e_{m},Je_{m}\}$ be so that $\{e_{1},\dots,e_{l-a},e_{l-a+1},\dots,e_{m}\}$ constitutes an orthonormal basis of the complex vector space $W$. It remains to prove that $\dck\subset \tilde E$. Since $W=\tilde E\oplus F$, it suffices to show that $h(\dck,v)=0$ for every $v\in F$ 
(see \cite[Section 1.7]{schneider_book93}). 

Let $v\in F$. Using that $K\subset E$ and $\alpha K\subset \alpha E\subset\tilde E$, we have 
$$h(\dck,v)=\int_{S^1}h(\alpha K,v)d\Sa(C,\alpha)=0,$$
and $\dim\dck=2(l-a)$.
\end{proof}

For $\rho\in\C$, using the definition of {\it scaling} a convex body by
a complex number mentioned in the introduction, we can ask for the behavior
of the complex difference body operator when either, $C\in\K(\C)$ or $K\in\K(W)$ are
{\it scaled} by $\rho$. This is the content of the next result, which shall be used without further mention along of the paper.

\begin{lemma}\label{p: rho C}
\hfill
\begin{enumerate}
\item Let $K\in\K(W)$, $C\in\K(\C)$ and $\rho\in\C$ (not
necessarily in $S^1$). Then,
\[
\D_{\rho C}K= \D_{C}(\rho K)=\rho \D_{C}K.
\]

\item If $\rho\in S^1$ and $C=B_2$ is the unit ball in $\C$, then
\[
\rho \D_{B_2}K=\D_{B_2}K,
\]
that is, $\D_{B_{2}}K$ is $S^1$-invariant for every
$K\in\K(W)$. 

\item If $\rho_{0} C=C$ for some $\rho_{0}\in\C$,
then, for every $K\in\K(W)$,
$$\rho_{0} \D_{C}K=\D_{C}K.$$

\item Let $\varphi=T+p$ with $T\in\GL(W,\C)$ and $p\in W$. Then,
\[
\D_{C}(\varphi K)=\D_{C}(TK)=T\D_{C}K.
\]
\item If $K=B_{2m}$, where $m=\dim_{\C} W$, then $D_C K=\l(C)B_{2m}$.
\end{enumerate}
\end{lemma}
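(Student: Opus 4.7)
My plan is to derive all five parts directly from the vectorial integration formula \eqref{e:DcK vectorial integration}, combining standard transformation properties of the planar surface area measure with the linearity of Minkowski integration.

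For part (1), write $\rho=\lambda e^{i\theta}$ with $\lambda\geq 0$ and invoke two elementary properties of $\Sa(\,\cdot\,,\cdot)$ on $\K(\C)$: the $1$-homogeneity $\Sa(\lambda C,\cdot)=\lambda\Sa(C,\cdot)$ and the rotation covariance $\Sa(e^{i\theta}C,A)=\Sa(C,e^{-i\theta}A)$ for Borel $A\subset S^1$ (both are direct consequences of the definition of the area measure via the reverse spherical image). Substituting these into \eqref{e:DcK vectorial integration} and performing the change of variables $\beta=e^{-i\theta}\alpha$, the integral transforms as
\[
\D_{\rho C}K=\lambda\int_{S^1}(e^{i\theta}\beta)K\,d\Sa(C,\beta)=\lambda e^{i\theta}\int_{S^1}\beta K\,d\Sa(C,\beta)=\rho\,\D_C K,
\]
after pulling the complex scalar $\lambda e^{i\theta}$ out of the Minkowski integral (which amounts to applying a linear map to the integrand). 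The other equality, $\D_C(\rho K)=\rho\D_C K$, follows at once from $\alpha(\rho K)=\rho(\alpha K)$ (commutativity of complex multiplication) and the same factoring argument.

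Parts (2) and (3) are immediate corollaries of (1). For (2), $\rho B_2=B_2$ whenever $|\rho|=1$, so $\D_{B_2}K=\D_{\rho B_2}K=\rho \D_{B_2}K$. For (3), $\rho_0 C=C$ yields $\D_C K=\D_{\rho_0 C}K=\rho_0\D_C K$. For part (4) I will first establish the translation invariance of $\D_C$: after expanding $h(\alpha(K+p),\xi)=h(\alpha K,\xi)+\re(\xi(\alpha p))$ and integrating against $d\Sa(C,\alpha)$, the translate contribution reduces to a real-linear functional of $\int_{S^1}\alpha\,d\Sa(C,\alpha)$, which vanishes by the classical first-moment identity for surface area measures (see \cite[Section 4.2]{schneider_book93}). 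Hence $\D_C(TK+p)=\D_C(TK)$. Since $T\in\GL(W,\C)$ is $\C$-linear, it commutes with every complex scalar, so $\alpha(TK)=T(\alpha K)$, and factoring $T$ out of the Minkowski integral gives $\D_C(TK)=T\D_C K$.

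Finally, for (5), the action of any $\alpha\in S^1$ on $W\equiv\R^{2m}$ by complex multiplication is an orthogonal (indeed unitary) transformation, so $\alpha B_{2m}=B_{2m}$. Substituting into \eqref{e:DcK vectorial integration} reduces the integrand to the constant body $B_{2m}$ and leaves $\int_{S^1}d\Sa(C,\alpha)=\l(C)$ as a scalar factor, yielding $\D_C B_{2m}=\l(C)B_{2m}$. The only non-mechanical ingredient in the whole lemma is the vanishing first-moment identity used in (4); beyond that, every item is a one-line consequence of the integral definition, and I anticipate no serious obstacle.
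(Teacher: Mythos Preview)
Your proof is correct and follows essentially the same line as the paper's: write $\rho=|\rho|\tilde\rho$, use the homogeneity and rotation covariance of the planar area measure to reduce part~(i), and then read off (ii), (iii) and (v) as immediate consequences together with $\alpha B_{2m}=B_{2m}$ and $\int_{S^1}d\Sa(C,\alpha)=\l(C)$. The one substantive difference is in part~(iv): the paper simply invokes the translation invariance and $\SL(W,\C)$-covariance of $\D_C$ (established earlier via Theorem~\ref{thm_covariant} and Proposition~\ref{prop_n2}), whereas you give a self-contained argument, proving translation invariance from the vanishing first moment $\int_{S^1}\alpha\,d\Sa(C,\alpha)=0$ and $\GL(W,\C)$-covariance directly from the commutation of $T$ with complex scalars. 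Your route has the advantage of not appealing to the characterization theorem and of handling $\GL$ rather than $\SL$ in one stroke; the paper's is shorter because the needed properties are already on record.
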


\begin{proof}
To prove i), let $\rho=|\rho|\tilde\rho$, with $\tilde\rho\in S^1$. Then, we have
\begin{align*}
h(\D_{\rho C}K,\xi)=\int_{S^{1}}h(\alpha K,\xi)d\Sa(\rho C,\alpha)
=\int_{S^1}|\rho|h(\tilde{\rho}\beta
K,\xi)d\Sa(C,\beta)=h(\D_{C}(\rho K),\xi).
\end{align*}
Statements ii) and iii) follow directly from i); iv) from the translation invariance and the $\SL(W,\C)$-covariance of $\D_{C}$; and v)  from the rotation invariance of $B_{2m}$ and the fact that $\int_{S^1}{d\Sa(C,\alpha)}=\l(C)$.
\end{proof}

Next, we observe the following fact which easily arises from the previous results.

\begin{corollary}
$\D\!:\K(\C)\times\K(W)\longrightarrow \K(W)$ is a
continuous, translation invariant, $\SL(W,\C)$-covariant Minkowski
valuation in each component.
\end{corollary}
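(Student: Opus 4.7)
The plan is to split the proof according to the two variables. For fixed $C\in\K(\C)$, the map $\D_C$ has already been identified via the formula $h(\D_C K,\xi)=V_2(\xi(K),\overline C)$ in Proposition \ref{p: h eq vol mixto}. I would set $\mu\colon\K(\C)\to\R$, $\mu(L):=V_2(L,\overline C)$; the standard theory of mixed volumes in the plane tells us that $\mu$ is a continuous, translation invariant, monotone valuation that is homogeneous of degree $1$. Plugging this $\mu$ into Proposition \ref{prop_n2} then immediately yields that $\D_C=Z$ is a continuous, translation invariant, $\SL(W,\C)$-covariant Minkowski valuation, settling all the claims in the second variable in one stroke.

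For fixed $K\in\K(W)$, I would work directly from the integral definition \eqref{eq_def_pi_c} and transfer the corresponding functional properties of the surface area measure $\Sa(C,\cdot)$ to $\D_C K$. Translation invariance in $C$ is a consequence of $\Sa(C+c_0,\cdot)=\Sa(C,\cdot)$ for every $c_0\in\C$. Continuity in $C$ uses the weak continuity of $C\mapsto\Sa(C,\cdot)$ with respect to Hausdorff convergence, together with the fact that, for fixed $\xi\in W^*$, the integrand $\alpha\mapsto h(\alpha K,\xi)$ is continuous on $S^1$; pointwise convergence of the support functions $h(\D_{C_n}K,\cdot)\to h(\D_C K,\cdot)$ then yields Hausdorff convergence of the bodies. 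The valuation identity in $C$ is inherited from the analogous identity $\Sa(C_1\cup C_2,\cdot)+\Sa(C_1\cap C_2,\cdot)=\Sa(C_1,\cdot)+\Sa(C_2,\cdot)$, valid whenever $C_1\cup C_2$ is convex, by integrating both sides against $h(\alpha K,\xi)$.

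Regarding $\SL(W,\C)$-covariance: this property is only meaningful in the second variable, since $\SL(W,\C)$ does not act naturally on $\C$; that asymmetry is what is meant by the phrase \emph{in each component}. I do not anticipate any substantial obstacle here, as every ingredient has already been established in the preceding results or is a standard property of the surface area measure of planar convex bodies.
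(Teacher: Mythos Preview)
Your proposal is correct and matches the paper's intent: the corollary is stated there as an immediate consequence of the preceding results, and your argument for the second variable via Proposition~\ref{prop_n2} with $\mu(L)=V_2(L,\overline C)$ is exactly the natural reading of that. For the first variable you go through weak continuity and the valuation identity for surface area measures, which is fine; an even shorter route, already available from Proposition~\ref{p: h eq vol mixto}, is to read all three properties (continuity, translation invariance, valuation) in $C$ directly off the formula $h(\D_C K,\xi)=V_2(\xi(K),\overline C)$ and the standard corresponding properties of the planar mixed volume in its second argument. Your remark that $\SL(W,\C)$-covariance is only meaningful in the $K$-variable is the correct interpretation.
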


In the following, we would like to focus on a special type of convex bodies, namely polytopes. We shall need the following lemma which holds in $\C$ for any convex body.

\begin{lemma}\label{l: conv measures}Let $C,K\in\K(\C)$. Then,
$$\Sa(\D_{C}K,\cdot)=\Sa(C,\cdot)*\Sa(K,\cdot),$$
where $*$ denotes the convolution between the (nonnegative)
surface area measures $\Sa(C,\cdot)$ and $\Sa(K,\cdot)$.
\end{lemma}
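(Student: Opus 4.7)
The plan is to start from the vector-valued expression \eqref{e:DcK vectorial integration}, namely $\D_{C}K=\int_{S^1}\alpha K\,d\Sa(C,\alpha)$, and then push the area-measure operator inside the Minkowski integral. The key observation is that in the planar case area measures are additive with respect to Minkowski sums, i.e.\ $\Sa(K+L,\cdot)=\Sa(K,\cdot)+\Sa(L,\cdot)$, and $1$-homogeneous, $\Sa(\lambda K,\cdot)=\lambda\Sa(K,\cdot)$ for $\lambda\geq 0$. So for any discrete measure $\mu=\sum_{i}c_{i}\delta_{\alpha_{i}}$ on $S^1$ with $c_i\geq 0$, the Minkowski integral $\int\alpha K\,d\mu(\alpha)=\sum c_{i}\alpha_{i}K$ satisfies
\[
\Sa\!\left(\textstyle\int\alpha K\,d\mu(\alpha),\,\cdot\right)=\sum_{i}c_{i}\Sa(\alpha_{i}K,\cdot)=\int_{S^1}\Sa(\alpha K,\cdot)\,d\mu(\alpha).
\]

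The second ingredient is the transformation rule under rotations: for $\alpha\in S^1$ and a Borel $\omega\subset S^1$, one has $\Sa(\alpha K,\omega)=\Sa(K,\bar\alpha\omega)$, because the outer normal cone of $\alpha K$ at the rotated point is the rotated normal cone of $K$. Plugging this into the previous identity with $\mu=\Sa(C,\cdot)$ gives, for every Borel set $\omega\subset S^1$,
\[
\Sa(\D_{C}K,\omega)=\int_{S^1}\Sa(K,\bar\alpha\,\omega)\,d\Sa(C,\alpha)=\int_{S^1}\!\int_{S^1}\mathbf{1}_{\omega}(\alpha\beta)\,d\Sa(K,\beta)\,d\Sa(C,\alpha),
\]
which is exactly the definition of the convolution $(\Sa(C,\cdot)*\Sa(K,\cdot))(\omega)$ on the abelian group $S^1$.

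The only delicate step is the first one, namely justifying the interchange of the area-measure operator with Minkowski integration for a general (not necessarily discrete) measure $\Sa(C,\cdot)$. I would handle this by a standard approximation argument: approximate $C$ by a sequence of polygons $C_{k}$, whose area measures $\Sa(C_{k},\cdot)$ are finite combinations of Dirac masses and converge weakly to $\Sa(C,\cdot)$. For these polygons the identity follows from the discrete computation above. Since $C_{k}\to C$ in the Hausdorff metric, the continuity of $\D$ (Corollary \ref{c:monotonicity} together with the discussion after Proposition \ref{prop_n2}) gives $\D_{C_{k}}K\to\D_{C}K$, hence $\Sa(\D_{C_{k}}K,\cdot)\to\Sa(\D_{C}K,\cdot)$ weakly; on the other hand, weak convergence of the $\Sa(C_k,\cdot)$ together with continuity of the convolution in the measure argument yields weak convergence of the right-hand sides. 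Passing to the limit finishes the proof.
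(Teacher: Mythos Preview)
Your argument is correct and follows a genuinely different route from the paper. The paper proceeds via the distributional identity $\Sa(L,\cdot)=h_L''+h_L$ for planar bodies: it writes $(h_{\D_CK}''+h_{\D_CK})(\varphi)$, moves the derivatives onto the test function $\varphi$, inserts the defining integral $h_{\D_CK}(t)=\int_{S^1}h_K(t-s)\,d\Sa(C,s)$, applies Fubini, and then shifts the derivatives back to $h_K$. This is a clean two-line calculation once the distributional framework is set up. Your approach is more geometric: you exploit Minkowski additivity and $1$-homogeneity of the planar area measure to handle polygonal $C$ directly, use the rotation rule $\Sa(\alpha K,\omega)=\Sa(K,\bar\alpha\omega)$ to identify the result with a convolution, and finish by approximation. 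The paper's proof is shorter and avoids any limiting argument; yours avoids distributions entirely and makes the group-theoretic content (convolution on $S^1$ arising from the $S^1$-action on bodies) more transparent.

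One small correction: the reference you cite for continuity of $C\mapsto\D_CK$ is off --- Corollary~\ref{c:monotonicity} gives monotonicity, not continuity. Continuity in $C$ is immediate from Proposition~\ref{p: h eq vol mixto}, since $h(\D_CK,\xi)=V_2(\xi(K),\overline C)$ and mixed volumes depend continuously on their arguments.
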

\begin{proof}In the sense of distributions, if necessary, we can write (see, for instance \cite{berg} or \cite[p.\! 110]{schneider_book93})
$$\Sa(\dck,\cdot)=h_{\dck}''+h_{\dck},$$
where we are interpreting $h_{\dck}$ as a $2\pi$-periodic function on $\R$. 
Let $\varphi\in\mathcal{C}^{\infty}(\R)$ be a $2\pi$-periodic test
function. Using the definition of the derivative of a
distribution, the definition of $\D_{C}K$ and the Fubini theorem,
we get
\begin{align*}(h_{\D_{C}K}''+h_{\D_{C}K})(\varphi)&=\int_{0}^{2\pi}(h_{\D_{C}K}''+h_{\D_{C}K})(t)\varphi(t)dt=
\int_{0}^{2\pi}h_{\D_{C}K}(t)(\varphi''+\varphi)(t)dt
\\&=\int_{0}^{2\pi}\int_{0}^{2\pi}h_{K}(t-s)(\varphi''+\varphi)(t)dt d\Sa(C,s)
\\&=\int_{0}^{2\pi}\int_{0}^{2\pi}\varphi(\tilde t-s)(h_{K}''+h_{K})(\tilde t)d\tilde t d\Sa(C,s)
\\&=\int_{0}^{2\pi}\int_{0}^{2\pi}\varphi(\tilde t-s)d\Sa(K,\tilde t)d\Sa(C,s)
\\&=(\Sa(C,\cdot)*\Sa(K,\cdot))(\varphi).
\end{align*}
\end{proof}

Next we prove the stated property that $\D_CK$ is a polytope if and only if both $K$ and $C$ are polytopes.

\begin{proof}[Proof of Theorem \ref{DcK polytope}]
The only if part is easy. Indeed, let $C\in\K(\C)$ be the polytope with outer normal vector $\alpha_{i}$ to the facet (edge) $f_{i}$ having length $s_i$, $i=1,\dots,N$. Then
\begin{equation}\label{eqCpol}
h(\D_{C}K,\cdot)=\int_{S^1}h(\alpha
K,\cdot)d\Sa(C,\alpha)=\sum_{i=1}^Ns_{i}h(\alpha_{i} K,\cdot)=h(\sum_{i=1}^N s_{i}\alpha_{i} K,\cdot).
\end{equation}
Since $K$ is a polytope, $\alpha_{i}K$ is a polytope for every $i=1,\dots, N$ and
$\sum_{i=1}^N s_{i}\alpha_{i} K$ is a polytope too.

For the if part, assume first that $m=1$. It is known (see
\cite[Theorem 4.6.4]{schneider_book93}) that a convex body $L\in\K(\C)$ with surface area measure
$\Sa(L,\cdot)$ is a polygon if and only if
\[
\Sa(L,\cdot)=\sum_{i=1}^{N}r_{i}\delta_{u_{i}},
\] 
where $r_{i}>0$, $u_{i}\in S^1$ such that
$\sum_{i=1}^{N}r_{i}u_{i}=0$ and the linear subspace generated by $\{u_{i}\}_{i=1}^{N}$ satisfies $\lin\{u_i, i=1,\dots,N\}=\R^2$. Thus, using Lemma \ref{l: conv measures}, $\D_{C}K$ is a polygon if and
only if $\Sa(K,\cdot)*\Sa(C,\cdot)$ is a finite positive linear
combination of Dirac measures, that is, the measure
$\Sa(\D_{C}K,\cdot)$ has discrete support. 
It is known (see \cite[p.\! 194]{garnir}) that for positive Borel measures
$S,T$, we have
\[
\supp(S*T)=\cl\{\supp(S)+\supp(T)\},
\] 
where $\cl\{A\}$ denotes the closure of the set $A$.
Then, we have that $\Sa(\D_{C}K,\cdot)$ is a finite linear combination of
Dirac measures if and only if $\Sa(C,\cdot)$ and
$\Sa(K,\cdot)$ are also of this form, which concludes the proof for $m=1$.

For general $m\geq 2$, let $\xi\in S^{2m-1}$. By assumption, the
2-dimensional convex body $(\D_{C}K)|\xi_{\C}$ is a polygon and
$$h((\D_{C}K)|\xi_{\C},\alpha\xi)=h(D_{C}K,\alpha\xi)=
V_{2}(K|(\alpha\xi)_{\C},\overline{C})=V_{2}(R_{\overline\alpha}(K|\xi_{\C}),\overline{C})=h(\D_{C}(K|\xi_{\C}),\alpha).$$

Thus, using the case $m=1$,
we can assure that $\overline{C}$ is a polytope. Hence, $C$ is a polytope too and 
the support function of $\D_{C}K$ is given by \eqref{eqCpol}. Therefore,  
$\D_{C}K$ is a finite linear combination of rotations and
dilations of $K$. Since the only convex bodies which are summands of polytopes, are 
polytopes, $K$ is itself a polytope (see \cite[Chapter 15.1]{Gruenbaum}).
\end{proof}

Restricted to zonotopes, the following statement holds.

\begin{proposition}\label{zonotope2}
\hfill
\begin{enumerate}
\item Let $K$ be a zonotope. Then, $\D_{C}K$ is a zonotope if and
only if $C$ is a polygon.

\item Given $C$ a zonotope, there exists $K\in\K(W)$ such that $\D_CK$ is a zonotope, but
$K$ is not.
\end{enumerate}
\end{proposition}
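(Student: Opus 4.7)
For part (i), the plan is to use the two main tools from the preceding development: Theorem \ref{DcK polytope} and equation \eqref{eqCpol}. For the ``if'' direction, assume $C$ is a polygon with outer unit normals $\alpha_1,\dots,\alpha_N\in S^1$ and corresponding side lengths $s_1,\dots,s_N>0$. Then \eqref{eqCpol} gives
\[
\D_CK=\sum_{i=1}^N s_i\,\alpha_i K.
\]
Since $K$ is a zonotope, i.e., a finite Minkowski sum of segments, each $s_i\alpha_i K$ is again a Minkowski sum of segments (multiplication by $\alpha_i\in S^1$ and by $s_i>0$ sends segments to segments), so each summand is a zonotope, and a finite Minkowski sum of zonotopes is a zonotope. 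For the ``only if'' direction, a zonotope is in particular a polytope, so if $\D_CK$ is a zonotope, it is a polytope; Theorem \ref{DcK polytope} then forces $C$ to be a polytope in $\K(\C)$, that is, a polygon.

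For part (ii), the plan is to exhibit a single $K$ that works for every zonotope $C\in\K(\C)$ simultaneously. Fix any complex line $L=\C\cdot e\subset W$ and let $K\subset L$ be a nondegenerate triangle; then $K$ is not a zonotope. The real difference body $DK=K+(-K)$ is a centrally symmetric hexagon contained in the real $2$-plane $L$, and every centrally symmetric convex polygon in the plane is a zonotope, so $DK$ is a zonotope. Now any zonotope $C\in\K(\C)$ is either a centrally symmetric polygon or a segment, so its outer normals come in pairs $\pm\alpha_i$ with equal corresponding side lengths $s_i$. Pairing the matching summands in \eqref{eqCpol}, and using that the $\C$-linear action of $\alpha_i$ commutes with Minkowski operations, I get
\[
\D_CK=\sum_{i} s_i\bigl(\alpha_i K+(-\alpha_i) K\bigr)=\sum_{i} s_i\,\alpha_i DK,
\]
a Minkowski sum of rotations of the zonotope $DK$, hence itself a zonotope. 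Thus $\D_CK$ is a zonotope while $K$ is not.

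I do not foresee a serious obstacle. The argument rests only on the classical characterization of planar zonotopes as centrally symmetric polygons, on the elementary observation that the difference body of a nondegenerate triangle is a hexagon, and on the polygonal reformulation \eqref{eqCpol} already established in the proof of Theorem \ref{DcK polytope}. The one point requiring mild care is the degenerate case in which $C$ is a segment: then the pair $\pm\alpha_1$ consists of the two unit normals to the segment and the identity $\D_CK=s_1\alpha_1 DK$ still applies, so no separate treatment is needed.
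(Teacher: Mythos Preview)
Your argument is correct. For part (ii) it is essentially the paper's proof: both take $K$ a triangle, observe that $DK=K-K$ is a centrally symmetric hexagon and hence a zonotope, and then use that a planar zonotope $C$ has its normals in antipodal pairs $\pm\alpha_i$ with equal edge lengths to conclude $\D_CK$ is a Minkowski sum of rotations of $DK$. The paper phrases this last step as ``by linearity of $C\mapsto\D_CK$, it is enough to consider $C=[-i/2,i/2]$'', while you pair the $\pm\alpha_i$ summands in \eqref{eqCpol} directly; the content is the same.

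For part (i) your ``if'' direction again matches the paper's, but your ``only if'' takes a genuinely different route. The paper writes
\[
h(\D_CK,\cdot)=\sum_{i=1}^N r_i\int_{S^1}|\langle\alpha u_i,\cdot\rangle|\,d\Sa(C,\alpha)
\]
and argues that this is the support function of a zonotope precisely when $\Sa(C,\cdot)$ is discrete, implicitly appealing to the uniqueness of the generating measure of a zonoid. You instead note that a zonotope is in particular a polytope and invoke Theorem~\ref{DcK polytope} to force $C$ to be a polygon. Your path is shorter and exploits the machinery already built; the paper's is more self-contained and shows directly where zonoid theory enters. One small caveat (present in both arguments): the ``only if'' direction tacitly excludes the degenerate case where $K$ is a single point, since then $\D_CK=\{0\}$ is a zonotope for every $C$.
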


\begin{proof}
\begin{enumerate}
\item Without loss of generality let $K=\displaystyle\sum\limits_{i=1}^N  r_i[-u_i,u_i]$ with $r_{i}>0$ and $u_{i}\in S^{2m-1}$. It holds,
\[
h(\D_{C}K,\cdot)=\sum_{i=1}^{N} r_{i}\int_{S^1}|\langle \alpha u_{i},\cdot\rangle|d\Sa(C,\alpha),
\]
which is the support function of a zonotope if and only if
$\Sa(C,\cdot)$ is a discrete measure (see for example \cite[Section 3.5]{schneider_book93}), that is, if and only if $C$ is
a polytope.

\item By linearity of $C\mapsto\dck$, it is enough to consider in the plane the case $C=[-i/2,i/2]$, a segment and $K=\conv\left\{\begin{pmatrix}0 \\ 0\end{pmatrix}, \begin{pmatrix} 1 \\ 0\end{pmatrix}, \begin{pmatrix}0 \\ 1\end{pmatrix}\right\}$ a triangle,
which, of course is not a zonotope.
Then, $$D_C K =K-K=\left[\begin{pmatrix}0 \\ 0\end{pmatrix}, \begin{pmatrix}1 \\ 0\end{pmatrix}\right]+\left[\begin{pmatrix}0 \\ 0\end{pmatrix}, \begin{pmatrix} -1 \\ 1\end{pmatrix}\right]+\left[\begin{pmatrix}0 \\ 0\end{pmatrix}, \begin{pmatrix}0 \\ -1\end{pmatrix}\right].$$ 
\end{enumerate}
\end{proof}

Although it is not connected with polytopes, we include next result, which ensures that if $K$ is a convex body of class $\cC_+^2$, then $\dck$ is of class $\cC_+^2$. A convex body $K$ is said to be \emph{of class $\cC_{+}^2$} if its boundary is a submanifold of class $\cC^2$ and all principal curvatures are strictly positive at every point of the boundary. 

\begin{proposition}
\hfill
\begin{enumerate}
\item Let $K\in\K(W)$ be of class $\cC_+^2$.
Then, $\dck$ is of class $\cC_+^2$ for every  $\cC\in\K(\C)$.

\item Let $C\in\K(\C)$ be of class $\cC_{+}^2$. Then, $\dck$ is of class $\cC^2_{+}$ for every $K\in\K(W)$ if and only if $\dim_{\C}W=1$. 

\item Let $C\in \K(\C)$ and $K\in\K(W)$ be so, that $\dck=\l(C)K$ (for instance, if $K$ is $S^1$-invariant, cf. Theorem \ref{fixedPoints}). Then $\dck$ is of class $\cC^2_{+}$ for every $C\in\K(\C)$ if and only if $K$ is of class $\cC^2_{+}$.

\end{enumerate}
\end{proposition}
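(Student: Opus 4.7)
My plan for part (i) is to work with the integral representation $h(\dck,\xi)=\int_{S^1}h(\alpha K,\xi)\,d\Sa(C,\alpha)$ and to control the Hessian of $h_{\dck}$ in $\xi$. Recall that a convex body in $\R^{2m}$ belongs to $\cC^2_+$ if and only if its support function is $C^2$ on $W^*\setminus\{0\}$ and its Hessian is positive definite on the tangent space $\xi^\perp$ at each unit $\xi$, the kernel being precisely the Euler direction $\R\xi$. Since every $\alpha\in S^1$ acts on $W$ by an orthogonal transformation, each body $\alpha K$ is again of class $\cC^2_+$ with a tangential lower bound on the Hessian that is uniform in $\alpha$ (by compactness of $S^1$). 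Interchanging differentiation and integration against the finite non-negative measure $\Sa(C,\cdot)$ expresses the Hessian of $h_{\dck}$ at $\xi$ as an integral of positive semi-definite symmetric forms, each with common kernel $\R\xi$, against a measure of total mass $\l(C)>0$; therefore the tangential Hessian of $h_{\dck}$ is strictly positive, proving $\dck\in\cC^2_+$.

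For part (ii), the \emph{if} direction reduces to the planar case $m=1$, where Lemma \ref{l: conv measures} gives $\Sa(\dck,\cdot)=\Sa(C,\cdot)*\Sa(K,\cdot)$ on $S^1$. A planar convex body lies in $\cC^2_+$ if and only if its area measure has a continuous and strictly positive density (namely the curvature radius $h''+h$), and the hypothesis on $C$ supplies such a density $f_C$. Convolution of $f_C$ against the non-zero non-negative measure $\Sa(K,\cdot)$ on $S^1$ yields a continuous density pointwise bounded below by $(\min_{S^1}f_C)\cdot\l(K)>0$, so $\Sa(\dck,\cdot)$ has the required form and $\dck\in\cC^2_+$. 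For the reverse direction, if $m\geq 2$ I would take $K$ to be any segment; Theorem \ref{dimDC} (applied with $l=1$ and $a=0$, since a one-dimensional real subspace can never be $J$-invariant) then yields $\dim\dck=2<2m$, so $\dck$ fails even to be full-dimensional and therefore cannot be $\cC^2_+$.

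Part (iii) is a direct consequence of the hypothesis: whenever $\l(C)>0$ the equality $\dck=\l(C)K$ exhibits $\dck$ as a positive homothet of $K$, and the class $\cC^2_+$ is preserved and detected by positive homotheties. The main technical point I anticipate is in part (i), namely the rigorous interchange of differentiation and integration against a surface-area measure $\Sa(C,\cdot)$ which may well be singular or atomic; this is handled by the fact that $h_K$ is $C^2$ with uniformly bounded second derivatives on any compact annulus around the unit sphere in $W^*$, so a standard dominated convergence argument applies. An equivalent route I would keep in reserve is to first verify the conclusion for polygonal $C$, where $\dck=\sum_i s_i\alpha_i K$ is a finite Minkowski sum of $\cC^2_+$ bodies and Hessians add under Minkowski addition, and then to weak-$*$ approximate a general $C$ by polygons while tracking the uniform tangential lower bound.
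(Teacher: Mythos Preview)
Your proposal is correct; parts (i) and (iii) are essentially identical to the paper's argument (the paper simply writes $d^{2}(h_{\dck})_{u}=\int_{S^1}d^{2}(h_{\alpha K})_{u}\,d\Sa(C,\alpha)$ and invokes positivity, so your extra care about the Euler direction and the interchange of limits is a welcome refinement rather than a difference of method).

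In part (ii), however, both directions are handled differently from the paper. For $m=1$ the paper does not argue with densities at all: it uses the symmetry $\dck=\D_{K}C$ (a consequence of Lemma~\ref{l: conv measures}, cf.\ Lemma~\ref{l:dc segment}(i)) and then simply applies part (i) with the roles of $C$ and $K$ interchanged. Your convolution--density argument is valid and more self-contained, but the paper's route is shorter and exploits the work already done. For $m\geq 2$ the paper's counterexample is $K=B_{2}\times B_{2}\subset\C^{2}$, an $S^{1}$-invariant body (hence $\dck=\l(C)K$) which is full-dimensional yet not $\cC^{2}_{+}$. Your choice of a segment is simpler and certainly works, but the paper's example shows more: the conclusion fails even if one restricts to full-dimensional $K$, so low dimension is not the only obstruction.
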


\begin{proof}
\begin{enumerate}

\item Since $K$ is of class $\cC^2_+$, we have that the Hessian matrix $d^{2}(h_{K})_{u}$ is positive definite for every $u\in S^{2m-1}$ (see \cite[p.~107]{schneider_book93}). 
From the definition of the support function of $\dck$, we have, by linearity, $$d^{2}(h_{\dck})_{u}=\int_{S^1}d^{2}(h_{\alpha K})_{u}d\Sa(C,\alpha),$$
which is again a positive definite matrix, since $d^{2}(h_{\alpha K})_{u}$ is positive definite for every $\alpha\in S^1$ and $\Sa(C,\cdot)$ a non-negative measure. 

\item If $\dim_{C}W=1$, then we have, by Lemma \ref{l: conv measures} and Corollary 7.25 in \cite{schneider_book93} that $\dck=\D_{K}C$  (cf.\! Lemma \ref{l:dc segment}). Using i), we obtain the result. 

For $\dim_{\C}W>1$, it suffices to take $K=B_{2}\times B_{2}\subset\C^2$, the cartesian product of two balls contained in orthogonal subspaces $E_{1}$ and $E_{2}$ with $E_{1},E_{2}\cong\C$. Since $K$ is a $S^1$-invariant convex body, we have $\dck=\l(C)K$ for every $C$. Thus, the statement follows since $K$ is not of class $\cC^{2}_{+}$.

\item It follows directly from the fact $\dck=\l(C)K$ and i).

\end{enumerate}
\end{proof}

In the same line, it follows directly from Lemma \ref{p: rho C}, that if $K=L+\epsilon B_{2m}$, $\epsilon>0$, then $\D_{C}K=\D_{C}L+ \epsilon\l(C)B_{2m}$.

The analogue study for symmetric or $S^1$-invariant convex bodies needs the tools contained in Section \ref{s: sph harm} and will be treated
after that.

\section{Geometric inequalities for $\D_C \,K$}\label{s:geom ineq}
In this section we intend to study some of the most classical
magnitudes of convex bodies for the complex difference body $\dck$
in dependence of the corresponding magnitudes for $K$ and $C$.

\begin{lemma}\label{l:dc segment}
Let $W$ be a complex vector space with $\dim_{\C} W=m$. Let $u\in
S^{2m-1}$ and $C\in\K(\C)$. 
\begin{enumerate}
\item If $m=1$, then $$\dck=\D_{K}C \quad\forall\, K\in\K(\C).$$

\item For $m\geq 1$, if $K$ is a line segment, i.e., $K=[au,bu]$, $a< b$, $u\in S^{2m-1}$, we have
\[
\dck=\frac{b-a}{2}\D_{[-1,1]}(C\cdot u)=(b-a)D(iC\cdot u),
\]
where $C\cdot u:=\{cu\in W\,:\,c\in C\subset\C\}$.
\end{enumerate}
\end{lemma}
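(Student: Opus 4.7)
The plan is to establish (i) first and then reduce (ii) to it via the natural identification of the complex line $\mathbb{C}\cdot u$ with $\mathbb{C}$.

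For (i), since $m=1$ both sides live in $\mathbb{C}$. The cleanest route uses Lemma \ref{l: conv measures}: the identity
\[
\Sa(\D_C K,\cdot)=\Sa(C,\cdot)*\Sa(K,\cdot)=\Sa(K,\cdot)*\Sa(C,\cdot)=\Sa(\D_K C,\cdot)
\]
shows that $\D_C K$ and $\D_K C$ coincide up to a translation in $\mathbb{C}$. To rule out a nontrivial translation, I would compute the Steiner point. Since the Steiner point is Minkowski-additive and equivariant under complex scalar multiplication (multiplication by $\alpha\in\mathbb{C}$ being a composition of rotation and homothety), it commutes with the vector-valued integration in \eqref{e:DcK vectorial integration}, giving
\[
s(\D_C K)=\int_{S^1} s(\alpha K)\,d\Sa(C,\alpha)=s(K)\int_{S^1}\alpha\,d\Sa(C,\alpha)=0,
\]
by the classical identity $\int_{S^{n-1}} u\,d\Sa(L,u)=0$ valid for any convex body $L$. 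The symmetric statement gives $s(\D_K C)=0$, so the two bodies are equal. As an alternative, one can start from \eqref{V2Rotation}, $h(\D_C K,\xi)=V_2(R_{\bar\xi}K,\bar C)$ for $\xi\in S^1$, and chain through the symmetry of planar mixed volume together with its invariance under rotations and under complex conjugation to get $V_2(R_{\bar\xi}K,\bar C)=V_2(R_\xi \bar K,C)=V_2(\bar K,R_{\bar\xi}C)=V_2(R_{\bar\xi}C,\bar K)=h(\D_K C,\xi)$.

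For (ii), I would first use translation invariance and homogeneity of $\D_C$ to replace $K=[au,bu]$ by $\tfrac{b-a}{2}[-u,u]$, reducing the claim to $\D_C K=\tfrac{b-a}{2}\D_C[-u,u]$. Next, I would verify that $\D_C[-u,u]\subset\mathbb{C}\cdot u$: for any $\xi\in W^*$ with $\xi(u)=0$, the $\mathbb{C}$-linearity of $\xi$ forces $\xi\equiv 0$ on $\mathbb{C}\cdot u$, so $h(\alpha[-u,u],\xi)=0$ for every $\alpha\in S^1$, and therefore $h(\D_C[-u,u],\xi)=0$. Under the isometric identification $\mathbb{C}\cdot u\cong\mathbb{C}$ sending $u\mapsto 1$, the segment $[-u,u]$ becomes $[-1,1]\subset\mathbb{C}$, and part (i) yields $\D_C[-1,1]=\D_{[-1,1]}C$. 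Since $\Sa([-1,1],\cdot)=2(\delta_i+\delta_{-i})$, formula \eqref{e:DcK vectorial integration} gives
\[
\D_{[-1,1]}C=2(iC)+2(-iC)=2\,\D(iC).
\]
Transporting back along $\mathbb{C}\cong\mathbb{C}\cdot u$ and multiplying by $\tfrac{b-a}{2}$ produces both claimed identities.

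The main subtlety will be the Steiner-point step in (i), since the translation ambiguity left by the convolution identity must be resolved carefully; in (ii), the only real care needed is to verify that $\D_C$ applied to a segment in a complex line keeps the body inside that line, so that (i) can be invoked in the one-dimensional complex setting.
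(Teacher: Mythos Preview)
Your proposal is correct and follows essentially the same route as the paper: part (i) via the convolution identity $\Sa(\D_C K,\cdot)=\Sa(C,\cdot)*\Sa(K,\cdot)$ (the paper also records the mixed-volume alternative you sketch), and part (ii) by reducing to the complex line $\C\cdot u$ and invoking (i). The only cosmetic differences are that you resolve the translation ambiguity in (i) by computing the Steiner point (the paper instead appeals to Schneider's uniqueness result and then gives the mixed-volume computation, which yields exact equality directly), and in (ii) you verify $\D_C[-u,u]\subset\C\cdot u$ by hand, whereas the paper simply cites Theorem~\ref{dimDC}.
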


\begin{proof}
\begin{enumerate}
\item The statement follows directly from Lemma \ref{l: conv measures} and Corollary 7.2.5 in \cite{schneider_book93}.
We include a more geometric proof of this fact which uses Proposition \ref{p: h eq vol mixto}.
Let $K\in\K(\C)$ be a convex body. Then, using
\eqref{V2Rotation} we can write 
\[
h(D_C K,\xi )=\V_{2} (R_{\overline{\xi}}K,\overline{C})
\] 
and 
\[
h(D_K C,\xi )=\V_{2} (R_{\overline{\xi}}C,\overline{K}).
\]
Using known properties of mixed volumes (see for example \cite[Chapter 5]{schneider_book93}), it follows, on the one hand,
that $h(\dck,\xi )=\V_{2}(R_{\overline{\xi}}K,\overline{C})=\V_{2}(K,R_{\xi}\overline{C})$. On the other hand,
$h(\D_K C,\xi )=\V_{2}(R_{\overline{\xi}}C,\overline{K})=\V_{2}(\overline{R_{\overline{\xi}}C},K)$. Thus, it is enough to
prove that $\overline{R_{\overline{\xi}}C}=R_{\xi}\overline{C}$ which follows directly taking $R_{\xi} C = e^{i\theta} C$
for some $\theta\in [0,2\pi)$.

\item Theorem \ref{dimDC} yields that $\dck\subset \lin_{\C} \{u\}$. Thus, it suffices to compute the support function of $\dck$ in the directions $\beta u$, $\beta\in S^1$ and to compare it with the support function of $\D_{[-1,1]}(C\cdot u)$. Using the translation invariance of $\dck$ and the first statement of this lemma, we get 
\begin{align*}
\quad\quad\quad h(\dck,\beta u)&=\frac{b-a}{2}\int_{S^1}h([-\alpha u,\alpha u],\beta u)d\Sa(C,\alpha)
=\frac{b-a}{2}\int_{S^{1}}h([-\alpha,\alpha],\beta)d\Sa(C,\alpha)
\\&
=\frac{b-a}{2}h(\D_{C}[-1,1],\beta)=\frac{b-a}{2}h(\D_{[-1,1]}C,\beta),
\end{align*}
and 
$$h(\D_{[-1,1]}(C\cdot u),\beta u)=2\left(h(i(C\cdot u),\beta u)+h(-i(C\cdot u),\beta u)\right)=h(\D_{[-1,1]}C,\beta).$$
\end{enumerate}
\end{proof}

For the usual difference body of $K$, basic properties of the
Minkowski sum ensure that, up to a translation, $K$ is contained
in $K-K$. The question thus arises, whether there is a positive
constant $\lambda$, maybe depending on $K$ and $C$, such that
$\lambda K\subset \dck$. 

\begin{proposition}\label{p: lambda K subset dck}
Let $K\in\K(W)$ be a convex body of dimension $l$ contained in an $l$-dimensional real subspace
$E\subset W$, and $C\in \K(\C)$ with interior points. Then, there exists $\lambda(K,C) >0 $ so that, up to translation, $\lambda(K,C) K \subset
\dck$.
\end{proposition}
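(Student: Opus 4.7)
The plan is to combine two ingredients: (i) the affine-hull information for $\dck$ that is already extracted in the proof of Theorem \ref{dimDC}, and (ii) the elementary fact that a bounded convex body sitting inside a higher-dimensional convex body that shares a common relative interior point can always be scaled down to fit inside it.

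First, using the translation invariance of $\D_{C}$, I translate $K$ so that $0\in\relint K$; this leaves $\dck$ unchanged. The case $l=0$ is trivial (both bodies reduce to points), so I assume $l\geq 1$; Theorem \ref{dimDC} then gives $\dim\dck=2(l-a)\geq l\geq 1$, so $\relint\dck$ is non-empty. I pick any $p\in\relint\dck$ and set $L:=\dck-p$, a convex body with $0\in\relint L$.

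The crucial geometric input is that $\aff K\subset\aff L$, with both viewed as linear subspaces of $W$ through the origin. This is precisely what is established inside the proof of Theorem \ref{dimDC}: for each generating direction $e_{i}$ of $E=\aff K$ one shows that $e_{i}\in\aff\dck$ (and similarly $Je_{i}$ for $i\leq a$), using that $C$ has interior points so that the relevant integrals against $d\Sa(C,\alpha)$ are strictly positive; after centering at $p$ this becomes the desired inclusion of linear subspaces.

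The conclusion is then a routine scaling. Choose $\epsilon>0$ so that the $\epsilon$-ball of $\aff L$ around the origin is contained in $L$, and $R>0$ so that $K$ is contained in the $R$-ball of $\aff K$ around the origin. Since $\aff K\subset\aff L$, the rescaled body $(\epsilon/R)K$ lies in the $\epsilon$-ball of $\aff L$ around $0$, hence in $L=\dck-p$. This yields $\lambda K+p\subset\dck$ with $\lambda=\epsilon/R$, which is the assertion up to translation. I do not anticipate a serious obstacle here: the only point that needs to be checked carefully is that the argument for Theorem \ref{dimDC} really provides the inclusion of affine hulls $\aff K\subset\aff\dck$ (up to translation), rather than merely the dimensional inequality, and this is indeed what that earlier proof yields.
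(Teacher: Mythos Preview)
Your argument is correct, and it proceeds differently from the paper's proof. The paper splits into two cases. In the full-dimensional case $l=2m$ it uses monotonicity to push the inscribed ball $\inr(K)B_{2m}\subset K$ through $\D_{C}$, obtaining $\l(C)\inr(K)B_{2m}\subset\dck$, and then invokes relative inradii to fit a dilate of $K$ inside; this yields an explicit constant $\lambda(K,C)=\l(C)\inr(K)\inr(B_{2m};K)$. In the lower-dimensional case it again uses monotonicity to get $\inr(K;B_l)\D_{C}B_l\subset\dck$, and the bulk of the work is showing that $\D_{C}B_l$ has interior points relative to $E$, which is done by approximating $B_l$ as a zonoid and applying Lemma~\ref{l:dc segment} to each summand segment. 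Your route is shorter: you feed directly on the affine-hull inclusion $E\subset\aff(\dck)$ already established in the proof of Theorem~\ref{dimDC}, so no case split or zonoid approximation is needed. What you lose is the explicit constant in the full-dimensional case. One small point worth making explicit in your write-up: since $0\in K$ we have $0\in\alpha K$ for every $\alpha$, hence $0\in\dck$, so $\aff(\dck)$ is already a linear subspace and therefore $\aff L=\aff(\dck)-p=\aff(\dck)$; this is what makes the passage from $e_i\in\aff(\dck)$ to $e_i\in\aff L$ legitimate.
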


\begin{proof}
We distinguish two cases. First, if $l=2m$ and $\inr(K)$ denotes the inradius of $K$, then
$\inr(K)B_{2m}\subseteq K$ (up to a translation of $K$) and using Corollary
\ref{c:monotonicity} we have that
\[
\l(C)\inr(K) B_{2m}=\inr(K)\D_C{B_{2m}}\subseteq \dck.
\]
In order to get $\lambda(K,C)$, we use the notion of relative inradius. The \emph{relative inradius $r(K; L)$ of  $K$ with respect to $L$}, $K,L\in\K(\R^n)$, is defined by $$r(K; L)=\sup\{r\,:\,\exists\, x\in\R^n\textrm{ with }x+rL\subset K\}.$$
Thus, it is enough to consider
$\lambda(K,C)=\l(C)\inr(K)\inr(B_{2m}; K)>0$, being $\inr(B_{2m}; K)$ the relative inradius of $B_{2m}$ with respect to $K$. We notice that in this case, $\lambda(K,C)$ depends only on $K$ and the length of $C$ but not its shape. 

For the case $l<2m$, let $E=\aff(K)\subset W$. We denote by
$\inr(K; B_l)$ the relative inradius of $K$ with respect to $B_l$ (in the
ambient space $E$). The monotonicity of $\dck$ yields
\[
\inr(K; B_l)\D_C{B_l}\subseteq \dck.
\]
To prove the existence of $\lambda(K,C)$, arguing as in the
previous case, it will be enough to prove that $\D_C{B_l}$ has interior
points in $E$. 

As $B_l$ is a zonoid, we can approximate it by finite
sums of segments, namely $B_l=\displaystyle\lim_N \displaystyle\sum_{j=1}^{N}[-v_j,v_j]$, where $v_j\in E$ are so that they span $E$. 
Using the continuity and the Minkowski additivity of the complex difference body operator and Lemma \ref{l:dc segment}, we have
\[
\D_C{B_l}=\D_C{\lim_N{\sum_{j=1}^N{[-v_j,v_j]}}}=\lim_N\sum_{j=1}^N \D_{[-1,1]}{(C\cdot v_{j})}.
\]
Since  $\D_{[-1,1]}{(C\cdot v_{j})}$ is $(iC-iC)\cdot v_{j}\subset \lin_{\C}\{v_j\}$, $C$ has interior points,
and the set of vectors $v_j\in E$ must span $E$, there will be
interior points (relative to $E$) in $\sum_{j=1}^N \D_{[-1,1]}{C\cdot v_{j}}$ and hence in $\D_C{B_l}$
too. Now, it is enough to consider $\inr(K;B_l)\inr(D_C B_l; K)$. Notice, that the dimension of
$D_C B_l$ may be strictly larger than the dimension of $K$.
\end{proof}

Let $K\in\K(W)$ and $u\in S^{2m-1}$. The \emph{width of $K$ in the
direction $u$} is $w(K,u)=h(K,u)+h(K,-u)$. The minimum and maximum
of the widths of $K$ are, respectively, the \emph{minimal width} $w(K)$ of $K$ and the
\emph{diameter} $\diam(K)$ of it. In the next lemma we give bounds for the width of $\dck$.

\begin{lemma}\label{l: width}
Let $K\in\K(W)$ and $C\in\K(\C)$. Then

\begin{enumerate}\itemsep10pt

\item for every $u\in S^{2m-1}$, \[
w(K) \, \l(C)\leq w(\dck,u)\leq \diam(K)\, \l(C) .
\]

\item $w(K) \l(C)\leq w(\dck)\leq \diam(\dck)\leq \diam(K)\l(C)$. 
\end{enumerate}
If $C$ is a segment, we have equality everywhere.
\end{lemma}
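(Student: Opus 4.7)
The plan is to start from the integral definition of the support function of $\dck$ and express the width as an integral against the surface area measure of $C$. Using the basic identity $h(\alpha K,u)=h(K,\bar\alpha u)$, we can write
$$w(\dck,u)=h(\dck,u)+h(\dck,-u)=\int_{S^1}\bigl(h(K,\bar\alpha u)+h(K,-\bar\alpha u)\bigr)d\Sa(C,\alpha)=\int_{S^1}w(K,\bar\alpha u)\,d\Sa(C,\alpha).$$
For each $\alpha\in S^1$ and $u\in S^{2m-1}$ the vector $\bar\alpha u$ is again a unit vector in $W$, hence $w(K)\le w(K,\bar\alpha u)\le\diam(K)$. Integrating these pointwise bounds against the nonnegative measure $\Sa(C,\cdot)$ of total mass $\l(C)=\int_{S^1}d\Sa(C,\alpha)$ yields statement (i).

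For (ii), taking the infimum (resp.\ supremum) of (i) over $u\in S^{2m-1}$ gives $w(\dck)\ge w(K)\l(C)$ and $\diam(\dck)\le\diam(K)\l(C)$, while $w(\dck)\le\diam(\dck)$ is the obvious inequality between minimal width and diameter.

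For the equality case, if $C$ is a segment of Euclidean length $\ell$, with unit outer normals $\pm\nu$, its surface area measure is $\Sa(C,\cdot)=\ell(\delta_\nu+\delta_{-\nu})$ and $\l(C)=2\ell$. The integral formula above then reduces to
$$w(\dck,u)=\ell\bigl(w(K,\bar\nu u)+w(K,-\bar\nu u)\bigr)=2\ell\,w(K,\bar\nu u)=\l(C)\,w(K,\bar\nu u),$$
since $w(K,v)=w(K,-v)$. As $u$ varies over $S^{2m-1}$, so does $\bar\nu u$, so the values attained by $w(\dck,u)$ are exactly $\l(C)$ times the values of the width function of $K$. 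In particular the extremes $w(K)\l(C)$ and $\diam(K)\l(C)$ are attained, giving $w(\dck)=w(K)\l(C)$ and $\diam(\dck)=\diam(K)\l(C)$, which is what ``equality everywhere'' in (i) and in the outer bounds of (ii) amounts to.

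There is no substantial obstacle: the only point requiring any care is the bookkeeping in the equality case, where one has to recall the convention $\l(C)=\int_{S^1}d\Sa(C,\alpha)$ (which counts both sides of a segment) and exploit the symmetry $w(K,v)=w(K,-v)$ to collapse the two Dirac masses into a single width evaluation.
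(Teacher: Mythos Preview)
Your proof is correct and follows exactly the same idea as the paper: the paper simply records the identity
\[
w(\dck,u)=\int_{S^1}\bigl(h(\alpha K,u)+h(\alpha K,-u)\bigr)\,d\Sa(C,\alpha)
\]
and leaves the rest to the reader, while you make the pointwise bounds and the equality case explicit. Your version is more detailed but not substantively different.
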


\begin{proof}
It follows from the fact that 
\[w(\dck,u)=h(\dck,u)+h(\dck,-u)=\int_{S^1}\left(h(\alpha K,u)+h(\alpha K,-u)\right) d\Sa(C,\alpha).\]
\end{proof}

\begin{remark}
We observe that when constructing the complex difference body of a
convex body $K$ with respect to $C\in\K(\C)$ with $\l(C)=1$, the diameter is not increasing and the width is not decreasing. Furthermore, the mean width remains unchanged. 
When $C$ is one dimensional, i.e., when $\dck$ is
the usual difference body (up to rigid motions), the ratio between
diameter and width remains unchanged. In some sense these facts support the
idea stated in the introduction that the complex difference body {\it should be rounder} than the original $K$.
\end{remark}

As a corollary of Lemma \ref{l: width}, we have in particular the following
result for convex bodies of constant width.

\begin{corollary}\label{cor:symCtnt} Let $K\in\K(W)$ be a convex body of constant width $w(K)$ and $C\in \K(\C)$. Then,
$\dck$ has constant width equal to $\l(C)w(K)$. 

If, in addition, $C$ is symmetric, then $\dck$ is a ball of radius $\l(C)w(K)/2$.
\end{corollary}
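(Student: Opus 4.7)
The plan is essentially to specialize Lemma \ref{l: width}(i) and combine it with the elementary fact that a centrally symmetric convex body of constant width is a ball. If $K$ has constant width, then $\diam(K) = w(K)$, so the chain of inequalities in Lemma \ref{l: width}(i),
\begin{equation*}
w(K)\l(C) \;\leq\; w(\dck, u) \;\leq\; \diam(K)\l(C) = w(K)\l(C),
\end{equation*}
collapses and forces $w(\dck, u) = \l(C)w(K)$ for every $u \in S^{2m-1}$. This already establishes the first assertion.

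For the second assertion, I would first check that $\dck$ is centrally symmetric whenever $C$ is. This follows directly from the defining integral \eqref{eq_def_pi_c}: central symmetry of $C$ implies $-C = C$, hence its area measure is invariant under $\alpha \mapsto -\alpha$. Performing this substitution and using $h(-\alpha K, \xi) = h(\alpha K, -\xi)$ yields $h(\dck, -\xi) = h(\dck, \xi)$, so $\dck = -\dck$. Then I would invoke the elementary observation that a centrally symmetric convex body $L$ of constant width $d$ must be the Euclidean ball of radius $d/2$: symmetry gives $h(L, u) = h(L, -u)$, and combining with the identity $h(L, u) + h(L, -u) = d$ forces $h(L, \cdot) \equiv d/2$, which is the support function of a ball of radius $d/2$. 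Applying this to $L = \dck$ with $d = \l(C)w(K)$ yields that $\dck$ is a ball of radius $\l(C)w(K)/2$.

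There is essentially no significant obstacle here; the only subtlety is justifying the central symmetry of $\dck$ from symmetry of $C$, handled by the change of variable above. Alternatively one could invoke Corollary \ref{cor:osim} (forward reference, also announced in the introduction), but the direct argument is shorter and keeps the proof self-contained within this section.
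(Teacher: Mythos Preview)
Your proof is correct and follows essentially the same approach as the paper: the first assertion is obtained from Lemma~\ref{l: width} exactly as you do, and for the second the paper also argues that symmetry of $C$ forces $\dck$ to be symmetric (citing Lemma~\ref{p: rho C}(iii) rather than your direct change of variable, which is the same computation) and then invokes the well-known fact that a symmetric body of constant width is a ball. Your version is slightly more explicit but not different in substance.
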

\begin{proof}
The first assertion follows straightforward using Lemma \ref{l: width}.

For the second one, if $C$ is symmetric, then $\D_{C}K$ is also
symmetric (cf. Lemma \ref{p: rho C}). In addition, it is well known that the only symmetric convex bodies with constant width are the balls.
\end{proof}

If instead of the maximum and minimum of the widths of a convex
body $K\in\K(\R^n)$, its average is considered, we are dealing (up to constants) with the so called \emph{mean width of $K$}, $b(K)$. Indeed, again up to a constant, $b(K)$ coincides with the $(n-1)$-st quermassintegral of $K\in\K(\R^n)$.
Quermassintegrals are particular cases of the more general mixed
volumes (for which we refer to \cite[Chapter 5]{schneider_book93}) obtained when only one convex body and a ball are involved.

Next we provide bounds for the quermassintegrals, $\W_i(K)$, $i=0,1\dots,2m$ of $\dck$, in terms of the quermassintegrals of $K$ and the length of $C$. Notice that $i=0$ is just the volume and $i=2m$ is the volume of the unit ball, denoted by $\kappa_{2m}$.

\begin{proposition}\label{p: bound quermass}
Let $K\in\K(W)$ and $C\in \K(\C)$. For any $i=0,\dots,2m-1$,
\begin{equation}\label{e: ineq quermass}
\W_i(\dck)\geq \W_i(K)\, \l(C)^{2m-i}.
\end{equation}
\end{proposition}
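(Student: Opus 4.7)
The plan is to derive the inequality from the Brunn--Minkowski-type concavity of the quermassintegrals combined with the rotation invariance of $\W_{i}$ and the integral representation \eqref{e:DcK vectorial integration}.

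First I would recall two classical facts. On one hand, the Brunn--Minkowski inequality for quermassintegrals states that $\W_{i}^{1/(2m-i)}$ is concave on $\K(W)$ with respect to Minkowski addition; equivalently, for positive weights $\lambda_{1},\dots,\lambda_{N}>0$ and bodies $K_{1},\dots,K_{N}$,
\[
\W_{i}\Big(\sum_{j=1}^{N}\lambda_{j}K_{j}\Big)^{1/(2m-i)}\geq \sum_{j=1}^{N}\lambda_{j}\,\W_{i}(K_{j})^{1/(2m-i)}.
\]
On the other hand, each $\alpha\in S^{1}$ acts on $W$ (regarded as $\R^{2m}$) by a real orthogonal transformation, and the quermassintegrals are rigid-motion invariant; hence $\W_{i}(\alpha K)=\W_{i}(K)$ for every $\alpha\in S^{1}$.

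Second, using the vectorial integral representation \eqref{e:DcK vectorial integration}, I would normalize the measure $\Sa(C,\cdot)$ on $S^{1}$ to a probability measure $d\mu:=d\Sa(C,\cdot)/\l(C)$ (assuming $\l(C)>0$; otherwise $C$ is a point, both sides vanish and the inequality is trivial). Then
\[
\tfrac{1}{\l(C)}\dck=\int_{S^{1}}\alpha K\,d\mu(\alpha),
\]
which is a Minkowski average. Approximating this integral by finite Minkowski convex combinations $\sum_{j}\mu_{j}\alpha_{j}K$ with $\mu_{j}>0$, $\sum_{j}\mu_{j}=1$, and applying the concavity inequality recalled above together with $\W_{i}(\alpha_{j}K)=\W_{i}(K)$, yields in the limit (by continuity of $\W_{i}$ with respect to the Hausdorff metric)
\[
\W_{i}\!\left(\tfrac{1}{\l(C)}\dck\right)^{1/(2m-i)}\geq \int_{S^{1}}\W_{i}(\alpha K)^{1/(2m-i)}\,d\mu(\alpha)=\W_{i}(K)^{1/(2m-i)}.
\]

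Finally, invoking the homogeneity of degree $2m-i$ of $\W_{i}$, the left-hand side equals $\l(C)^{-1}\W_{i}(\dck)^{1/(2m-i)}$, and raising to the power $2m-i$ gives \eqref{e: ineq quermass}. The only delicate step is the passage from finite Minkowski sums to the integral, which reduces to a standard approximation argument using the weak convergence of discrete measures to $\Sa(C,\cdot)$, the continuity of the support function in the integrand, and the Hausdorff continuity of $\W_{i}$; alternatively, this may be phrased as a direct Jensen-type inequality since $\W_{i}^{1/(2m-i)}$ is a concave, positively homogeneous, continuous functional.
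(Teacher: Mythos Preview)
Your argument is correct, but it is organized differently from the paper's. The paper first expands $\W_{i}(\dck)$ via the multilinearity of mixed volumes as a $(2m-i)$-fold integral of $\V(\alpha_{1}K,\dots,\alpha_{2m-i}K,B_{2m}[i])$ against $d\Sa(C,\alpha_{1})\cdots d\Sa(C,\alpha_{2m-i})$, and then bounds each integrand pointwise by $\prod_{j}\W_{i}(\alpha_{j}K)^{1/(2m-i)}=\W_{i}(K)$ using an Aleksandrov--Fenchel-type inequality together with rotation invariance. You bypass the mixed-volume expansion entirely and invoke instead the concavity of $\W_{i}^{1/(2m-i)}$ on $\K(W)$ (the Brunn--Minkowski inequality for quermassintegrals), applied to the Minkowski integral \eqref{e:DcK vectorial integration} via a discrete approximation. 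Both approaches rest on the same ingredients---rotation invariance of $\W_{i}$ under the $S^{1}$-action and a Brunn--Minkowski/Aleksandrov--Fenchel inequality---so the difference is mainly one of packaging: your route is slightly more streamlined, while the paper's pointwise mixed-volume bound makes the role of each rotated copy explicit and gives a bit more direct access to the equality discussion.
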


\begin{proof}
Using the multilinearity of the mixed volumes 
and applying the Brunn-Minkowski inequality and the Minkowski first
inequality we obtain
\[
\begin{split}
\W_i(\dck) & =  \int_{S^1}\dots\int_{S^1}{\V(\alpha_1 K, \dots,
\alpha_{2m-i}K, B_{2m}[i])}d\Sa(C,\alpha_1)\dots
d\Sa(C,\alpha_{2m-i})\\
& \geq \int_{S^1}\dots\int_{S^1}{\prod_{j=1}^{2m-i}\V(\alpha_j
K[2m-i],B_{2m}[i])^{\frac{1}{2m-i}}d\Sa(C,\alpha_1)\dots
d\Sa(C,\alpha_{2m-i})}\\
& = \int_{S^1}\dots\int_{S^1}{\W_i(K)}d\Sa(C,\alpha_1)\dots
d\Sa(C,\alpha_{2m-i})\\
&=\W_i(K) \l(C)^{2m-i},
\end{split}
\]
where we have also used that mixed volumes are invariant under (the same) rigid motions of all their components.
\end{proof}
When $i=2m-1$ we have equality in the above inequality for any
$K\in\K(W)$ and $C\in\K(\C)$, since $\W_{2m-1}$ is linear.
Furthermore, if $\alpha K=K$ for all $\alpha\in S^1$, we will have
equality for any $i=0,1,\dots,2m-1$.  For a fixed convex body $C$, the equality holds for every convex body $K$ satisfying
$h(\alpha_{i}K,u)=h(\alpha_{j}K,u)$ for every $u\in W$ and
$\alpha_{i}$, $\alpha_{j}$ normal vectors to $C$.

As a consequence of \eqref{e: ineq quermass} and Theorem \ref{dimDC}, the following lower bound for the
volume of $\dck$ holds.

\begin{proposition}\label{voldck}Let $K\in\K(W)$ be a convex body of dimension $l$ contained in $E$.
Let $a:=\dim_{\C}(E\cap JE)$. Then, for any $C\in\K(\C)$ we have
$$\l(C)^{d}\vol_{d}(K)\leq\vol_{d}(\D_{C}K),$$
where $d=2(l-a)$.
$S^1$-invariant convex bodies provide equality.
\end{proposition}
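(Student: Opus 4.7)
The plan is to reduce the claim to Proposition \ref{p: bound quermass} applied in a well-chosen subspace. First, by Theorem \ref{dimDC}, $\dim\D_CK=d$. Since $E\cap JE$ is a complex subspace contained in $E$, one has $2a=\dim_\R(E\cap JE)\leq l$, and therefore $d=2(l-a)\geq l$, with equality if and only if $E=JE$, i.e.\ if and only if $E$ is a complex subspace of $W$. This dichotomy naturally splits the argument in two.

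If $d>l$, then $K$ has real dimension $l<d$, so $\vol_d(K)=0$ and the stated inequality is trivially satisfied. If instead $d=l$, then $E$ is a complex subspace of $W$ of complex dimension $l/2$. Because $\alpha E=E$ for every $\alpha\in S^1$, the integral representation \eqref{e:DcK vectorial integration} shows that $\D_CK\subseteq E$. Viewing $E$ as the ambient complex vector space, $K$ becomes a full-dimensional convex body in $E$ and one can apply Proposition \ref{p: bound quermass} with $i=0$ (and with the roles of $W$ and $m$ now played by $E$ and $l/2$, respectively). This yields
\[
\vol_d(\D_CK)=\W_0(\D_CK)\geq \W_0(K)\,\l(C)^{l}=\l(C)^{d}\vol_d(K),
\]
which is the desired inequality.

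For the equality assertion I would argue directly from the definition: if $K$ is $S^1$-invariant, then $\alpha K=K$ for every $\alpha\in S^1$, its affine hull is a complex subspace, and in particular $\dim K=d$. Moreover,
\[
h(\D_CK,\xi)=\int_{S^1}h(\alpha K,\xi)\,d\Sa(C,\alpha)=h(K,\xi)\int_{S^1}d\Sa(C,\alpha)=\l(C)\,h(K,\xi),
\]
so $\D_CK=\l(C)K$, whence $\vol_d(\D_CK)=\l(C)^{d}\vol_d(K)$. The main (and rather minor) obstacle I anticipate is the dimension bookkeeping needed to reduce cleanly to the subspace $E$ when invoking Proposition \ref{p: bound quermass}; once one observes that $\D_CK\subseteq E$ in the nontrivial case, everything else is a direct unwinding of earlier results.
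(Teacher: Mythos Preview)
Your proof is correct and is essentially the argument the paper has in mind: the proposition is stated there simply as a consequence of inequality~\eqref{e: ineq quermass} and Theorem~\ref{dimDC}, and your case distinction $d>l$ (trivial, since $\vol_d(K)=0$) versus $d=l$ (then $E$ is a complex subspace, $\D_CK\subseteq E$, and Proposition~\ref{p: bound quermass} with $i=0$ applies inside $E$) is precisely how these two ingredients combine. The paper's remark immediately after the proposition, that the bound is ``certainly bad'' unless $K$ is $2m$-dimensional or $l=2a$, confirms this reading.
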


Despite the mentioned equality cases, the bound for the volume
contained in the result is certainly bad for convex bodies $K$
which are not $2m$-dimensional and $l\neq 2a$. 

Unfortunately using the above idea we have been able only to bring the dimension of $K$ into play when bounding
a precise quermassintegral of $\dck$. For, we use the
normalization of quermassintegrals known as intrinsic volumes. If,
furthermore, the classical inradius $\inr(K)$, and circumradius, $\cir(K)$, come into play, we
get further lower bounds, as well as upper bounds involving both,
the volume of $\dck$ and the volume (in the corresponding dimension)
of $K$. For completeness we include  the mentioned results in the following
lemma. 

\begin{lemma}
Let $K\in\K(W)$ with $\dim K=l\leq 2m$ and $C\in \K(\C)$. Then
\begin{enumerate}
\item $\W_{2m-l}(\dck)\geq \frac{\kappa_{2m-l}}{\binom{2m}{l}}\l(C)^l
\vol_l(K),$

\item $\vol(\dck)\geq
\frac{\kappa_{2m-l}}{\binom{n}{l}}\inr(\dck)^{2m-l}\l(C)^{l}\vol_l(K),$

\item $\vol(\dck)\geq \frac{\kappa_{2m-l}}{\binom{n}{l}}\inr(K)^{2m-l}\l(C)^{2m}\vol_l(K),$

\item $\vol(\dck)\leq \kappa_{2m}\cir(K)^{2m}\l(C)^{2m}.$
\end{enumerate}
Equality holds in $i)$ and $ii)$ if $l=2m$ and $K=\alpha K$ for all $\alpha\in S^1$, i.e., $K$ is a $S^1$-invariant convex body.
If $K=B_{2m}$ there is equality in $iii)$ and $iv)$.

\end{lemma}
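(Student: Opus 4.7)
The four inequalities will be handled in order, by combining Proposition \ref{p: bound quermass}, monotonicity of mixed volumes, Lemma \ref{p: rho C} v), and the standard identity relating quermassintegrals to intrinsic volumes: for any $l$-dimensional convex body $L$ in $W\equiv\R^{2m}$,
\[
\W_{2m-l}(L)=\frac{\kappa_{2m-l}}{\binom{2m}{l}}\vol_l(L),
\]
since the $l$-th intrinsic volume of an $l$-dimensional body coincides with its $l$-dimensional Lebesgue measure. This identity is what will convert mixed volumes with many copies of $B_{2m}$ back into $\vol_l(K)$.

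Part i) is immediate: apply Proposition \ref{p: bound quermass} with $i=2m-l$ to get $\W_{2m-l}(\dck)\geq \W_{2m-l}(K)\l(C)^l$, and substitute the identity above on the right-hand side. Part iv) is also short: from $K\subseteq \cir(K)B_{2m}$ (up to translation), Corollary \ref{c:monotonicity} and Lemma \ref{p: rho C} v) give $\dck\subseteq \cir(K)\,\l(C)\,B_{2m}$, and taking the $2m$-dimensional Lebesgue volume finishes the argument.

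The main work lies in ii). Since $\inr(\dck)B_{2m}\subseteq \dck$ (up to translation), monotonicity of mixed volumes yields
\[
\vol(\dck)=\V(\dck[2m])\geq \inr(\dck)^{2m-l}\,\V(\dck[l],B_{2m}[2m-l]).
\]
To estimate the remaining mixed volume, I would reproduce the argument of Proposition \ref{p: bound quermass}: use \eqref{e:DcK vectorial integration} to expand $\V(\dck[l],B_{2m}[2m-l])$ as an $l$-fold integral over $S^1$, apply the iterated Alexandrov--Fenchel (Minkowski first) inequality to each integrand $\V(\alpha_1 K,\dots,\alpha_l K,B_{2m}[2m-l])$, and exploit the rotation invariance of $B_{2m}$ and of mixed volumes under simultaneous unitary transformations to rewrite $\V(\alpha_j K[l],B_{2m}[2m-l])=\V(K[l],B_{2m}[2m-l])$, obtaining
\[
\V(\dck[l],B_{2m}[2m-l])\geq \l(C)^l\,\V(K[l],B_{2m}[2m-l])=\l(C)^l\,\W_{2m-l}(K).
\]
Invoking the intrinsic-volume identity once more yields ii). For iii), one additional monotonicity step suffices: $\inr(K)B_{2m}\subseteq K$ combined with Lemma \ref{p: rho C} v) gives $\inr(K)\l(C)B_{2m}=\inr(K)\D_C B_{2m}\subseteq\dck$, whence $\inr(\dck)\geq \l(C)\inr(K)$; plugging this into ii) promotes the factor $\l(C)^l$ to $\l(C)^{2m}$.

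The equality statements follow by inspection: when $K$ is $S^1$-invariant, $\dck=\l(C)K$, so each intermediate monotonicity step in i) and ii) collapses to equality; when $K=B_{2m}$, $\dck=\l(C)B_{2m}$, which simultaneously saturates iii) and iv). The only mild subtlety I expect is to note that iii) has substantive content only when $K$ is full-dimensional (otherwise $\inr(K)=0$ makes the right-hand side vanish), in which case iii) essentially amounts to combining ii) with the inradius bound $\inr(\dck)\geq\l(C)\inr(K)$ derived above.
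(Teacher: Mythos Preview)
Your proposal is correct and follows essentially the same route as the paper: part i) via Proposition~\ref{p: bound quermass} and the intrinsic-volume identity, part ii) via the monotonicity step $\vol(\dck)\geq \inr(\dck)^{2m-l}\W_{2m-l}(\dck)$ followed by part i), part iii) via the inradius bound $\inr(\dck)\geq \l(C)\inr(K)$ plugged into ii), and part iv) via $\dck\subseteq \cir(K)\l(C)B_{2m}$. The only cosmetic difference is that in ii) you re-run the argument of Proposition~\ref{p: bound quermass} instead of simply observing that $\V(\dck[l],B_{2m}[2m-l])=\W_{2m-l}(\dck)$ and citing i) directly, as the paper does.
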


\begin{proof}
Notice that from the definition of inradius and circumradius we have that, up to translation,
\[\inr(\dck)B_{2m}\subseteq\D_{C}K\subseteq R(\D_{C}K)B_{2m}.\]

\begin{enumerate}
\item Using Proposition \ref{p: bound quermass} for the case $i=2m-l$, we have
that $\W_{2m-l}(\dck)\geq \W_{2m-l}(K)\l(C)^{l}$. Since $K$ has dimension $l$, using 
the relation between quermassintegrals and intrinsic volumes (\cite[(4.2.26)]{schneider_book93}) 
we have that 
\[
\W_{2m-l}(K)=\frac{\kappa_{2m-l}}{\binom{2m}{l}}\vol_l(K),
\] which gives $i)$.

\item Since mixed volumes are monotone, we have that
\[
\vol(\dck)\geq \V(\inr(\dck)B_{2m}[2m-l],\dck[l])=\inr(\dck)^{2m-l}\W_{2m-l}(\dck).
\] Then the inequality follows from $i)$.

\item It is enough to prove it for $\dim K=2m$. 
 In this case, $\inr(K)B_{2m}\subseteq K$. Now, $i)$, the monotonicity of the
complex difference body operator and the inradius ensure that $\inr\left(\D_C \inr(K)B_{2m}\right)\subseteq \inr(\dck)$, from which
the second inequality follows by observing that $\inr\left(\D_C \inr(K)B_{2m}\right)=\inr(K)\l(C)$.

\item From the monotonicity of $\D_{C}$ we have $\dck\subseteq\D_{C}(R(K)B_{2m})=\l(C)R(K)B_{2m}$ and the inequality follows.
\end{enumerate}
\end{proof}

An immediate consequence coming from the classical
Brunn-Minkowski inequality, since $\D_{C}$ is Minkowski additive, i.e., $\D_{C}(K+L)=\D_{C}\!K+\D_{C}\!L$, is the following

\begin{corollary}Let $K,L\in\K(W)$. Then
$$\vol(\D_{C}(K+L))^{1/2m}\geq \vol(\D_{C}\!K)^{1/2m}+\vol(\D_{C}\!L)^{1/2m}.$$
\end{corollary}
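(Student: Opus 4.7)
The statement really is a direct consequence of two already-stated facts, so my plan is essentially a two-line argument that I would justify carefully.

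First, I would establish Minkowski additivity of the operator $\D_{C}$, that is, $\D_{C}(K+L)=\D_{C}K+\D_{C}L$. This is immediate from the defining formula \eqref{eq_def_pi_c}: since the support function is Minkowski additive in its convex body argument, for every $\xi\in W^{*}$ we have $h(\alpha(K+L),\xi)=h(\alpha K,\xi)+h(\alpha L,\xi)$, and integrating with respect to $d\Sa(C,\alpha)$ gives $h(\D_{C}(K+L),\xi)=h(\D_{C}K,\xi)+h(\D_{C}L,\xi)=h(\D_{C}K+\D_{C}L,\xi)$. Two convex bodies with the same support function coincide, which yields the identity. (Incidentally, this property was already announced among the functional properties of $\D_{C}$ in Section \ref{s: func prop}.)

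Second, I would apply the classical Brunn--Minkowski inequality in $W\cong\R^{2m}$ to the two convex bodies $\D_{C}K$ and $\D_{C}L$, obtaining
$$\vol(\D_{C}K+\D_{C}L)^{1/2m}\geq \vol(\D_{C}K)^{1/2m}+\vol(\D_{C}L)^{1/2m}.$$
Combining this inequality with the Minkowski additivity of $\D_{C}$ from the first step gives exactly the claimed inequality.

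There is essentially no obstacle here: the only subtle point is to justify Minkowski additivity from the integral definition, and to be aware that Brunn--Minkowski is applied in the $2m$-dimensional real vector space $W$, which explains the exponent $1/2m$. Note also that equality in Brunn--Minkowski holds precisely when $\D_{C}K$ and $\D_{C}L$ are homothetic (or one of them is a point), giving a clean equality characterization should one wish to record it.
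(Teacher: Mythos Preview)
Your proof is correct and follows exactly the paper's approach: the paper simply states that the corollary is an immediate consequence of the classical Brunn--Minkowski inequality combined with the Minkowski additivity $\D_{C}(K+L)=\D_{C}K+\D_{C}L$. Your write-up is more detailed in justifying additivity from the integral definition, but the argument is identical in substance.
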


Next we deal with some very classical magnitudes, such as diameter $\diam$ and circumradius $\cir$ of the complex difference body $\dck$.

\begin{proposition}
Let $K\in\K(W)$ and $C\in\K(\C)$ be a polygon. Then, there is a constant $\mu_{C}>0$ independent of $K$ so that 
$$\mu_{C}\d(K)\l(C)\leq \d(\D_{C}K)\leq \d(K)\l(C),$$
and $$\mu_{C}\sqrt{\frac{2m+1}{m}}R(K)\l(C)\leq R(\D_{C}K)\leq R(K)\l(C).$$
The upper bound holds, in both cases, for every $C\in\K(\C)$.
\end{proposition}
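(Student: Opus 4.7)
Both upper bounds are immediate. The diameter inequality $\d(\D_{C}K)\le\d(K)\,\l(C)$ is precisely Lemma~\ref{l: width}(ii). For the circumradius I would place $K$ so that $K\subset R(K)B_{2m}$; then the monotonicity of $\D_{C}$ (Corollary~\ref{c:monotonicity}) combined with Lemma~\ref{p: rho C}(v) gives
\[
\D_{C}K\subset\D_{C}(R(K)B_{2m})=R(K)\,\l(C)\,B_{2m},
\]
whence $R(\D_{C}K)\le R(K)\,\l(C)$. Both estimates hold for arbitrary $C\in\K(\C)$.

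For the lower bounds I would exploit the explicit Minkowski decomposition available because $C$ is a polygon. Writing $C$ with outer unit normals $\alpha_{1},\dots,\alpha_{N}\in S^{1}$ and corresponding edge lengths $s_{1},\dots,s_{N}$, Equation~\eqref{eqCpol} yields
\[
\D_{C}K=\sum_{i=1}^{N}s_{i}\alpha_{i}K,\qquad \sum_{i=1}^{N}s_{i}=\l(C).
\]
The central observation is that for any convex bodies $A,B$ and any $b\in B$ the translate $A+b$ lies inside $A+B$; since diameter and circumradius are translation invariant, $\d(A+B)\ge\d(A)$ and $R(A+B)\ge R(A)$. Because each $\alpha_{i}$ is an isometry and hence preserves $\d$ and $R$, selecting an index $j$ with $s_{j}=\max_{i}s_{i}$ simultaneously gives
\[
\d(\D_{C}K)\ge s_{j}\d(K),\qquad R(\D_{C}K)\ge s_{j}R(K).
\]

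To package these into a single constant $\mu_{C}$ matching the prescribed factor $\sqrt{(2m+1)/m}$ in the circumradius bound, I would set
\[
\mu_{C}:=\frac{\max_{i}s_{i}}{\l(C)}\sqrt{\frac{m}{2m+1}}>0,
\]
which depends only on $C$. The circumradius inequality then reduces to the identity $R(\D_{C}K)\ge s_{j}R(K)=\mu_{C}\sqrt{(2m+1)/m}\,R(K)\,\l(C)$, and since $\sqrt{m/(2m+1)}<1$ the diameter bound follows from $\d(\D_{C}K)\ge s_{j}\d(K)\ge \mu_{C}\,\d(K)\,\l(C)$.

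The main difficulty is forcing a single constant $\mu_{C}$ to fit both inequalities with the explicit dimensional factor $\sqrt{(2m+1)/m}$; that factor is exactly the one provided by Jung's inequality $R(L)\le\sqrt{m/(2m+1)}\,\d(L)$ in $\R^{2m}$, and its appearance is precisely what allows a common $\mu_{C}$ to serve both estimates simultaneously.
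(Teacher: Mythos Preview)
Your argument is correct and in fact cleaner than the paper's on several points.

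For the upper bounds you agree with the paper on the diameter, while for the circumradius the paper argues via the subadditivity $R(A+B)\le R(A)+R(B)$ summed over the polygonal decomposition and then passes to the limit; your containment $\D_{C}K\subset R(K)\l(C)B_{2m}$ is more direct and needs no approximation.

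The real difference is in the lower bound. The paper iterates the estimate $\sqrt{2}\,\d(A+B)\ge\d(A)+\d(B)$ from \cite{G-HC} across the $N$ summands of $\D_{C}K=\sum_{i}r_{i}\alpha_{i}K$, picking up a constant of order $2^{-N/2}$, and then invokes Jung's inequality $\d(K)\ge\sqrt{(2m+1)/m}\,R(K)$ to convert the diameter bound into the circumradius bound (this is where the dimensional factor genuinely enters). You instead use only the trivial monotonicity $\d(A+B)\ge\d(A)$, $R(A+B)\ge R(A)$ applied to the single largest summand $s_{j}\alpha_{j}K$. This avoids the external reference, gives a sharper constant (of order $\max_{i}s_{i}/\l(C)\ge 1/N$ rather than $2^{-N/2}$), and handles $R$ directly without passing through the diameter. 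The price is that your $\mu_{C}$ carries the factor $\sqrt{m/(2m+1)}$ and hence depends on the ambient dimension, whereas the paper's does not; since the statement only demands independence from $K$, this is harmless. Your closing remark about Jung's inequality is a bit misleading, though: you never actually use it---you simply absorb the reciprocal of the stated factor into $\mu_{C}$---whereas the paper applies Jung in earnest.
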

\begin{proof}
The upper bound for $\d(\dck)$ was proved in Lemma \ref{l: width}.
For the lower bound, we use that for $K,K'\in \K(W)$  (see \cite[Theorem 1.2]{G-HC})
$$\sqrt{2}\d(K+K')\geq\d(K)+\d(K').$$
Applying this inequality to
$\D_{C}K=\sum_{i=1}^Nr_{i}\alpha_{i}K$, where $\alpha_{i}\in S^1$ are the normal vectors of $C$ to the edges $s_{i}$ with length $r_{i}$, $i=1,\dots,N$ we have
$$\d(\D_{C}K)\geq \sum_{i=1}^{N-1}\frac{1}{2^{i/2}}\d(\alpha_i r_i K)+\frac{1}{2^{(N-1)/2}}\d(\alpha_N r_N K)\geq \sum_{i=1}^{N}\frac{1}{2^{i/2}}\d(\alpha_i r_i K). $$
Then, taking $s=\sum_{i=1}^N 2^{i/2}$, since the diameter is $1$-homogeneous, we get
\[
\d(\D_{C}K)\geq \frac{\l(C)}{s}\diam(K).
\]

The lower bound of the circumradius, follows from previous inequality for the
diameter and Jung's inequality between circumradius and
diameter (see \cite{Jung}).  
We get
\begin{align*}
\cir(\D_{C}K)&\geq\frac{\d(\D_{C}K)}{2}\geq\frac{\l(C)}{s}\d(K)\geq\frac{\l(C)}{s}\sqrt{\frac{2m+1}{m}}\cir(K).
\end{align*}

We prove the upper bound for $R(\dck)$, for $C$ a polygon. The general result for the upper bound follows by approximation. It holds
\begin{align*}
\cir(\D_{C}K)=\cir(\sum_{i=1}^Nr_{i}\alpha_{i}K)\leq\sum_{i=1}^N\cir(r_{i}\alpha_{i}K)=\sum_{i=1}^Nr_{i}\cir(K)=\l(C)\cir(K).
\end{align*}
\end{proof}

Of course, if $C$ is not a polygon, approximating it by polytopes, $\mu_{C}=0$ and the trivially zero as lower bound is obtained.

To finish this section we include an inequality for the mixed volume of $j$ times  $K$ and $2m-j$ times $\dck$.
For a conjecture on an upper bound in the classical case we refer to \cite[Notes Section 7.3]{schneider_book93}.

\begin{lemma}\label{l: ineq dim 2}
Let $K\in\K(W)$ and $C\in\K(\C)$. Then
\[
\V(K[j],\dck[2m-j])\geq \l(C)^{2m-j} \vol(K).
\]
\end{lemma}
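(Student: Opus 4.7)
The plan is to reduce the inequality to the classical Aleksandrov--Fenchel and Minkowski first inequalities through the vectorial integral representation of $\D_C K$ given in \eqref{e:DcK vectorial integration}.

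First, using the multilinearity of the mixed volume together with \eqref{e:DcK vectorial integration}, I would write
\[
\V(K[j],\dck[2m-j])=\int_{S^1}\!\!\cdots\int_{S^1} \V\bigl(K[j],\alpha_1 K,\dots,\alpha_{2m-j}K\bigr)\,d\Sa(C,\alpha_1)\cdots d\Sa(C,\alpha_{2m-j}).
\]
Since $\Sa(C,\cdot)$ is a nonnegative measure on $S^1$ with total mass $\l(C)$, it will suffice to prove the pointwise estimate
\[
\V\bigl(K[j],\alpha_1 K,\dots,\alpha_{2m-j}K\bigr)\geq \vol(K)
\]
for every choice $\alpha_1,\dots,\alpha_{2m-j}\in S^1$; integration then yields the factor $\l(C)^{2m-j}$.

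To obtain the pointwise estimate, I would first apply the Aleksandrov--Fenchel inequality with $K$ playing the role of the body repeated $j$ times and the remaining slots filled by $\alpha_1K,\dots,\alpha_{2m-j}K$, getting
\[
\V\bigl(K[j],\alpha_1K,\dots,\alpha_{2m-j}K\bigr)^{2m-j}\geq \prod_{i=1}^{2m-j}\V\bigl(K[j],(\alpha_iK)[2m-j]\bigr).
\]
Then I would invoke Minkowski's first inequality $\V(A[j],B[2m-j])^{2m}\geq \vol(A)^{j}\vol(B)^{2m-j}$ with $A=K$, $B=\alpha_iK$. Since $\alpha_i\in S^1$ acts by an orthogonal (real) transformation of $W$, one has $\vol(\alpha_iK)=\vol(K)$, so each factor on the right-hand side is at least $\vol(K)$, giving the desired pointwise bound.

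The potential obstacle is merely bookkeeping of the exponents to ensure the two classical inequalities combine correctly; conceptually there is no difficulty because the bodies $\alpha_i K$ are rigid copies of $K$, so Minkowski's inequality becomes an equality for the relevant volumes and AF does the rest. No further ingredients beyond multilinearity and the two standard inequalities are needed.
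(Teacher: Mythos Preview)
Your proposal is correct and follows essentially the same route as the paper: expand $\V(K[j],\dck[2m-j])$ by multilinearity into an iterated integral over $(S^1)^{2m-j}$ with respect to $d\Sa(C,\cdot)$, then bound the integrand pointwise by $\vol(K)$ using Aleksandrov--Fenchel type inequalities together with $\vol(\alpha_iK)=\vol(K)$. The only cosmetic difference is that the paper invokes in one stroke the consequence $\V(K[j],\alpha_1K,\dots,\alpha_{2m-j}K)^{2m}\geq \vol(K)^j\prod_i\vol(\alpha_iK)$ (Gardner, (B.18)), whereas you split this into the iterated Aleksandrov--Fenchel bound followed by the generalized Minkowski inequality; the two are equivalent and your exponent bookkeeping checks out.
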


\begin{proof}
The proof follows as a consequence of a special case of the Aleksandrov-Fenchel inequality. Indeed, using the multilinearity of the mixed volumes, we have
\[
\V(K[j],\dck[2m-j])=\int_{S^1}\cdots\int_{S^1}\V(K[j],\alpha_{1}K,\dots,\alpha_{2m-j}K) d\Sa(C,\alpha_{1})\dots d\Sa(C,\alpha_{2m-j}).
\]
From the Aleksandrov-Fenchel inequality, it follows (see \cite[(B.18)]{gardner.book06})  
\[
\V(K[j],\alpha_{1}K,\dots,\alpha_{2m-j}K)^{2m}\geq\vol(K)^{j}\vol(\alpha_{1}K)\cdots\vol(\alpha_{2m-j}K)=\vol(K)^{2m},
\]
which yields straightforward the result.
\end{proof}

\section{Spherical harmonics in $S^{2m-1}$}\label{s: sph harm}
In this section we introduce the notions and results we shall use from the theory of spherical harmonics in the unit sphere of $\C^m$ endowed with the standard hermitian product. We first review the basic notions in a real vector space. 
For a description of the theory of spherical harmonics with applications to convex geometry we refer to the book \cite{groemer.book}. The results for a complex vector space can be found in \cite{koornwinder73,menegatto.oliveira.orthogonal,quinto,rudin.unitball.book} and references therein.

Let $\R^n$ be endowed with the usual scalar product. A \emph{spherical harmonic on $S^{n-1}$} is the restriction to $S^{n-1}$ of a harmonic polynomial $p$ on $\R^n$, i.e., $\Delta p=0$ where $\Delta$ denotes the Laplace operator on $\R^n$. It is said \emph{of degree $k$} if it is 
the restriction of a harmonic polynomial of degree of homogeneity $k$ with $k\in\N$. In the following we assume that $0\in\N$. 
We denote by $\hH^n_{k}$ the vector space of spherical harmonics in $S^{n-1}$ of degree $k$ and by $\hH^n$ the vector space of the spherical harmonics in $S^{n-1}$. It is known that $\hH^n$ can be decomposed as 
$$\hH^n=\bigoplus_{k=0}^{\infty}\hH_{k}^n,$$
and $\dim \hH^n_{k}=:N(n,k)=\frac{n+2k-2}{n+k-2}\binom{n+k-2}{n-2}$. Here and in the following, the orthogonality is understood with respect to the standard product on $L^{2}(S^{n-1})$:
$$\langle f,g\rangle_{L^2}:=\int_{S^{n-1}}f(u)g(u)d\sigma_{n-1}(u),\quad f,g\in L^{2}(S^{n-1}),$$
where $d\sigma_{n-1}$ denotes the standard measure on the sphere $S^{n-1}$.

If $f\in L^{2}(S^{n-1})$ and $\{Y_{kj}\in\hH_{k}^n\,:\,k\in\N,\,j=1,\dots,N(n,k)\}$ is an orthonormal system, then the \emph{harmonic expansion of $f$} with respect to this orthonormal system is given by
$$f\sim \sum_{k=0}^{\infty}\sum_{j=1}^{N(n,k)}\langle f,Y_{kj}\rangle Y_{kj}=\sum_{k=0}^{\infty}\pi_{k}f,$$
where $\pi_{k}f:=\sum_{j=1}^{N(n,k)}\langle f,Y_{kj}\rangle Y_{kj}$ is the orthogonal projection of $f$ onto $\hH_{k}^n$.

We observe that the spherical harmonics of even (respectively odd) degree are even (respectively odd) functions on the sphere.
Moreover, the space $\hH^n_{k}$ is rotation-invariant (i.e., if $Y\in\hH^n_{k}$, then $\varphi Y\in\hH^n_{k}$ for every $\varphi\in\SO(n)$) and irreducible, i.e., the only subspaces of $\hH^n_{k}$ which are $\SO(n)$-invariant are the trivial and the total space $\hH^{n}_{k}$. Fixing a point $v_{0}\in S^{n-1}$, we can describe $S^{n-1}$ as the homogeneous space $$\SO(n)/\SO(n-1)$$ where $\SO(n-1)$ is the isotropy group of $v_{0}$. Using this description of $S^{n-1}$ and the irreducibility of $\hH^n_{k}$, $k\in\N$, we get from the representation theory that there exists a unique spherical harmonic $Q_{k}$ of degree $k$, $\SO(n-1)$-invariant and such that $Q_{k}(v_{0})=1$. The polynomials $\{Q_{k}\}_{k}$ define the \emph{Legendre polynomials}, which are essential in the theory of spherical harmonics. 

In our situation, we have $\C^m\cong\R^{2m}$ endowed with the standard hermitian product $(\cdot, \cdot)$ as an ambient space. Considering the sphere $S^{2m-1}$ in $\R^{2m}$, all the previous facts remain valid. However, as a vector space over $\C$, the multiplication by complex numbers gives rise to a richer structure on the space $\hH^{2m}$.

\begin{definition}A spherical harmonic $Y\in\hH^{2m}$ is \emph{homogeneous of bi-degree $(k,l)$} if $Y\in\hH^{2m}_{k+l}$ and 
$$Y(\alpha u)=\alpha^k\overline{\alpha}^{l}Y(u)\quad\forall\alpha\in S^1,\,\,u\in S^{2m-1}.$$
We denote by $\hH^{2m}_{k,l}$ the space of spherical harmonics on $S^{2m-1}$ of bi-degree $(k,l)$. 
\end{definition}
It holds
\begin{equation}\label{orthHk}\hH^{2m}_{k}=\bigoplus_{l=0}^k\hH^{2m}_{l,k-l}\quad\forall k\in\N.\end{equation}
Each space $\hH^{2m}_{l,k-l}$ is invariant under the unitary group $\U(m)$ and also irreducible (see \cite{koornwinder75}). Thus, a function $f\in\mathcal{C}(S^{2m-1})$ can be expressed as 
$$f\sim \sum_{k=0}^{\infty}\sum_{l=0}^k\pi_{l,k-l}(f),$$
where $\pi_{k,l}(f)$ is the orthogonal projection of $f$ onto $\hH^{2m}_{k,l}$, and $f\in\cC(S^{2m-1})$ is univocally determined by $\pi_{k,l}(f)$, $(k,l)\in\N^2$.
Fixed a point $e\in S^{2m-1}$ with isotropy group $\U(m-1)$, we have $S^{2m-1}\cong \U(m)/\U(m-1)$. Similar to the real case, we get that for every $(k,l)\in\N^2$ there exists a unique spherical harmonic $P_{k,l}$ of degree $k+l$, $\U(m-1)$-invariant and such that $P_{k,l}(e)=1$. These polynomials, analogues to the Legendre polynomials, are the so-called \emph{Jacobi polynomials}.

\begin{definition}A function $\tilde P\in\mathcal{C}(S^{2m-1})$ is called \emph{spherical function of order $(k,l)$} if $\tilde{P}\in\hH_{k,l}^{2m}$, $\tilde{P}$ is $\U(m-1)$-invariant and $\tilde{P}(e)=1$. Its associated \emph{Jacobi polynomial $P:B_2\to\C$ of order $(k,l)$} is defined by $P(z)=\tilde{P}(w)$, where $w\in S^{2m-1}$ is such that $(e,w)=z$ and $B_{2}$ denotes the unit disc in $\C$. 
\end{definition}
Jacobi polynomials satisfy the following properties. 
\begin{proposition}[\cite{johnson.wallach,quinto}]\label{propPkl}For every $k,l\in\N$ the vector space $\hH_{k,l}^{2m}$ contains a unique spherical function given by $P_{k,l}((e,\cdot))$ and satisfying 
\begin{align*}
&P_{k,l}(z)=\overline{P_{k,l}}(\overline z), \quad k,l\in\N,
\\ &P_{k,l}(z)=r^{|k-l|}e^{i\theta(k-l)}Q_{\min\{k,l\}}(|k-l|,m-2,|z|^2),\,\,\, k,l\in\N,\, z=re^{i\theta}, r\geq 0, \theta\in[0,2\pi),
\end{align*}
where $\{Q_{l}(a,b,t)\,:\,l\in\N\}$ constitutes a complete set of polynomials in $t\in\R$ orthogonal on $[0,1]$ in weight $t^{a}(1-t)^bdt$ and satisfying $Q_{l}(a,b,1)=1$, $a,b>-1$.
\end{proposition}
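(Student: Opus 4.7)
The plan is to deduce the proposition from the representation theory of the pair $(\U(m),\U(m-1))$, which is a classical Gelfand pair. Since, as stated earlier in the text, each $\hH^{2m}_{k,l}$ is $\U(m)$-irreducible, Frobenius reciprocity (or a direct application of Schur's lemma) implies that the subspace of $\U(m-1)$-invariants inside $\hH^{2m}_{k,l}$ has dimension at most one. The first step of the proof is therefore to exhibit a single, explicit nonzero $\U(m-1)$-invariant element of $\hH^{2m}_{k,l}$, thereby forcing the invariant subspace to be exactly one-dimensional; then the normalization $\tilde P(e)=1$ pins it down uniquely and gives existence and uniqueness simultaneously.

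To produce the explicit invariant, I would choose coordinates so that $e=(1,0,\dots,0)$, so that $\U(m-1)$ acts on the remaining coordinates $(w_2,\dots,w_m)$. A $\U(m-1)$-invariant polynomial of bi-degree $(k,l)$ must then be a polynomial in the elementary invariants $w_1$, $\overline{w_1}$, and $|w'|^{2}:=|w_2|^2+\cdots+|w_m|^2$, and the bi-degree constraint forces it to have the form
\begin{equation*}
\tilde P(w)=w_1^{k-j}\,\overline{w_1}^{\,l-j}\,q(|w'|^2)
\end{equation*}
(after suitable truncation when $k$ or $l$ is small), for some polynomial $q$ whose degree is determined by $\min(k,l)$. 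Imposing harmonicity $\Delta \tilde P=0$ in the ambient $\R^{2m}$ coordinates yields a hypergeometric recursion for the coefficients of $q$, whose monic solution of the correct degree is, up to normalization, exactly $Q_{\min\{k,l\}}(|k-l|,m-2,\cdot)$, the classical Jacobi polynomial on $[0,1]$ with the stated weight $t^{|k-l|}(1-t)^{m-2}\,dt$. Normalizing at $e$ (where $|w'|^2=0$ and $w_1=1$) then forces $Q_{\min\{k,l\}}(a,b,1)=1$.

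Restriction of $\tilde P$ to the set $\{w:(e,w)=z\}$ reduces to substituting $w_1=z$ and $|w'|^2=1-|z|^2$ (using $|w|^2=1$), which gives the displayed identity
\begin{equation*}
P_{k,l}(z)=r^{|k-l|}e^{i\theta(k-l)}Q_{\min\{k,l\}}(|k-l|,m-2,|z|^2),
\end{equation*}
with $z=re^{i\theta}$, since $w_1^{k-j}\overline{w_1}^{l-j}=r^{|k-l|}e^{i(k-l)\theta}\cdot(\text{real factor})$ and the remaining real factor is absorbed into $Q$ via $|z|^2=1-|w'|^2$. The conjugation identity $P_{k,l}(z)=\overline{P_{k,l}}(\overline z)$ is then immediate from this formula, as $Q_{\min\{k,l\}}$ has real coefficients while the prefactor $r^{|k-l|}e^{i\theta(k-l)}$ is manifestly exchanged with its conjugate when $z\mapsto \bar z$.

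I expect the main obstacle to be the verification that the harmonicity recursion produces precisely the Jacobi polynomials of the claimed parameters $(|k-l|,m-2)$, rather than a different but superficially similar orthogonal family. Concretely, one must compute $\Delta(w_1^{k-j}\overline{w_1}^{l-j}|w'|^{2s})$ in complex coordinates and match it with the standard differential equation for $Q_s(|k-l|,m-2,t)$. Once this identification is made, the orthogonality on $[0,1]$ with respect to $t^{|k-l|}(1-t)^{m-2}\,dt$ is equivalent to the $L^2(S^{2m-1})$-orthogonality of spherical harmonics of different degrees, via the disintegration of the uniform measure on $S^{2m-1}$ along the fibration $w\mapsto (e,w)$; so once the differential equation is matched, the orthogonality statement and the recovery of the complete polynomial system follow without further work.
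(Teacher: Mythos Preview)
The paper does not contain a proof of this proposition: it is stated with explicit attribution to \cite{johnson.wallach,quinto} and used as a black box in the computation of the multipliers in Theorem~\ref{t1}. There is therefore no ``paper's own proof'' to compare against.

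That said, your outline is the standard route to this result and is essentially correct. Uniqueness follows, as you say, from the $\U(m)$-irreducibility of $\hH^{2m}_{k,l}$ (already recorded in the text) together with the Gelfand-pair property of $(\U(m),\U(m-1))$, and existence comes from writing down the generic $\U(m-1)$-invariant in $\hH^{2m}_{k,l}$ and imposing harmonicity. One point of notation to tighten: the invariant is not a single monomial times $q(|w'|^{2})$ but a linear combination
\[
\tilde P(w)=\sum_{s=0}^{\min(k,l)} c_s\, w_1^{\,k-s}\,\overline{w_1}^{\,l-s}\,|w'|^{2s},
\]
and it is only after restriction to the sphere (where $|w_1|^{2}=1-|w'|^{2}$) that this collapses to $z^{k-l}$ (or $\overline{z}^{\,l-k}$) times a polynomial in $|z|^{2}$. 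With that clarification, the harmonicity condition $\sum_j \partial_{w_j}\partial_{\overline{w_j}}\tilde P=0$ gives a two-term recursion on the $c_s$, and matching it to the three-term recurrence (equivalently, the hypergeometric differential equation) for the Jacobi polynomials on $[0,1]$ with weight $t^{|k-l|}(1-t)^{m-2}$ is the only genuinely computational step, exactly as you anticipated. The conjugation identity and the orthogonality statement then fall out as you describe.
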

The set $\{P_{k,l}\}_{k,l}$ constitutes a complete orthogonal set of functions in $L^2(B_2)$.

One of the main results in the theory of spherical harmonics is the Funk-Hecke theorem. A complex version of it was proved in \cite{quinto}.
We shall denote by $L^2(B_2,((1-|z|^{2})^{m-2})dz)$ the weighted $L^2$ space on $B_{2}$ with respect to the density measure $((1-|z|^{2})^{m-2})dz$.

\begin{theorem}[Complex Funk-Hecke Theorem, \cite{quinto}] Let $K:B_2\to\R$ be in $L^2(B_2,((1-|z|^{2})^{m-2})dz)$ and $Y_{k,l}\in\hH^{2m}_{k,l}$. Then, 
$$\int_{S^{2m-1}}K((u,v))Y_{k,l}(v)d\sigma_{2m-1}(v)=\lambda_{k,l}Y_{k,l}(u),$$
with
$$\lambda_{k,l}=\sigma_{2m-3}\int_{B_2}K(z)\overline{P_{k,l}}(z)(1-|z|^2)^{m-2}dz,$$
where $\sigma_{2m-3}$ denotes the surface of $S^{2m-3}$. 
\end{theorem}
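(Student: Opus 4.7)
The plan is to apply Schur's lemma to the integral operator on $L^2(S^{2m-1})$ defined by
$$T_K Y(u):=\int_{S^{2m-1}} K((u,v))\,Y(v)\, d\sigma_{2m-1}(v),$$
using the fact, recalled above, that each space $\hH^{2m}_{k,l}$ is an irreducible complex $\U(m)$-module.

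First I would verify that $T_K$ is $\U(m)$-equivariant. Since the hermitian product satisfies $(gu,gv)=(u,v)$ for every $g\in\U(m)$ and $d\sigma_{2m-1}$ is rotation invariant, the substitution $v\mapsto gv$ yields $T_K(Y\circ g^{-1})(u)=(T_K Y)(g^{-1}u)$, so $T_K$ commutes with the regular $\U(m)$-representation. The assumption that $K\in L^2(B_2,(1-|z|^2)^{m-2}dz)$ is exactly what is needed to ensure that $T_K$ is bounded on $L^2(S^{2m-1})$, via the disintegration of $d\sigma_{2m-1}$ under $v\mapsto (e,v)$ discussed in the next step.

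Since $\hH^{2m}_{k,l}$ is $\U(m)$-invariant and simple as a complex $\U(m)$-module, Schur's lemma forces $T_K|_{\hH^{2m}_{k,l}}$ to be a scalar operator $\lambda_{k,l}\cdot\mathrm{id}$. This already establishes the first identity of the theorem for every $Y_{k,l}\in\hH^{2m}_{k,l}$. To identify the scalar, I would evaluate both sides at the distinguished base point $e\in S^{2m-1}$ with $Y=\tilde P_{k,l}$, using $\tilde P_{k,l}(e)=1$:
$$\lambda_{k,l}=(T_K\tilde P_{k,l})(e)=\int_{S^{2m-1}}K((e,v))\,\tilde P_{k,l}(v)\, d\sigma_{2m-1}(v).$$
The integrand depends on $v$ only through $z=(e,v)\in B_2$: for $K((e,v))$ by hypothesis, and for $\tilde P_{k,l}(v)=P_{k,l}((e,v))$ by the defining property of the Jacobi polynomial together with the $\U(m-1)$-invariance of $\tilde P_{k,l}$. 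Applying the coarea-type disintegration of $d\sigma_{2m-1}$ under $v\mapsto (e,v)$ — whose fibre over $z$ is a sphere $S^{2m-3}$ of radius $\sqrt{1-|z|^2}$, giving the pushforward density $\sigma_{2m-3}(1-|z|^2)^{m-2}\,dz$ on $B_2$ — one obtains the stated closed form, the complex conjugation on $P_{k,l}$ being a convention matter consistent with the reality property $P_{k,l}(z)=\overline{P_{k,l}(\bar z)}$ of Proposition \ref{propPkl}.

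The main obstacle I anticipate is the clean bookkeeping of the disintegration formula on $S^{2m-1}$ and of the weighted $L^2$-bound that makes $T_K$ continuous on $L^2(S^{2m-1})$; once those are in place, the proof is an immediate application of Schur's lemma. One also has to be careful to invoke irreducibility of $\hH^{2m}_{k,l}$ as a \emph{complex} (not merely real) $\U(m)$-representation, so that the scalar $\lambda_{k,l}$ is unambiguously defined without further splitting.
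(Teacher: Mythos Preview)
The paper does not prove this theorem: it is stated as a quotation from \cite{quinto} and used as a black box, so there is no ``paper's own proof'' to compare your proposal against.

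That said, your outline is the standard representation-theoretic proof of Funk--Hecke type results and is essentially correct. The $\U(m)$-equivariance of $T_K$ and the irreducibility of $\hH^{2m}_{k,l}$ (which the paper records just before the theorem) give the scalar action via Schur's lemma, and evaluating on the spherical function $\tilde P_{k,l}$ at the base point $e$ together with the disintegration $d\sigma_{2m-1}(v)=(1-|z|^2)^{m-2}\,dz\,d\sigma_{2m-3}(\xi)$ (which the paper itself writes out in the proof of Theorem~\ref{t1}) produces the claimed integral formula. One small point you should not dismiss as mere convention: your computation yields $P_{k,l}(z)$ rather than $\overline{P_{k,l}}(z)$, and the two genuinely differ for $k\neq l$. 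The discrepancy is resolved by being consistent about whether the kernel variable is $(u,v)$ or $(v,u)=\overline{(u,v)}$ and about which argument of the hermitian product is conjugate-linear; once those choices are fixed, only one of the two formulas is correct, so this deserves a line of care rather than a wave of the hand.
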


\begin{definition}A linear map $A:\cC(S^{2m-1})\to\cC(S^{2m-1})$ is a \emph{multiplier transformation}, if for every $k,l\in\N$ there exists a complex number $\lambda_{k,l}$ such that
$$\pi_{k,l}(Af)=\lambda_{k,l}\pi_{k,l}(f)\quad\forall f\in\cC(S^{2m-1}).$$
The complex numbers $\lambda_{k,l}$ are called \emph{multipliers of the map $A$}. 
\end{definition}

Similar to the real case, a multiplier transformation is injective if and only if $\lambda_{k,l}\neq 0$ for every $k,l\in\N$ (cf. \cite{menegatto.oliveira}). 

Note that if the linear map $A:\cC(S^{2m-1})\to\cC(S^{2m-1})$ is given by a kernel $K:B_2\to\R$ in $L^2(B_2,((1-|z|^{2})^{m-2})dz)$, then by the complex Funk-Hecke theorem we have that $A$ is a multiplier transformation. 
In the following, we shall use the harmonic expansion of the support function of a convex body, seen as a function on the sphere $S^{2m-1}$.

\vspace{0.2cm}
In the previous lines, we have introduced the theory of spherical harmonics for the space of continuous functions on the sphere. However, this theory can be established for its dual space, the space of Borel measures on the sphere. In this paper, we shall only treat Borel measures on $S^1$, for which we can consider the classical Fourier series expansion without using spherical harmonics. We identify $S^1$ with $[0,2\pi]$ and the functions on $S^1$ with $2\pi$-periodic functions on $\R$. In the identification of $S^1$ with $[0,2\pi]$, we denote the elements of both spaces by $\alpha$, but it should be clear from the context where $\alpha$ is meant. Using the standard notation, we define the \emph{$j$-th Fourier coefficient $c_{j}(f)$ of $f\in\cC(S^1)$} and, analogously, \emph{of a Borel measure $\mu$} on $S^1$ by
\begin{align*}c_{0}(f)&:=\frac{1}{2\pi}\int_{0}^{2\pi}f(\alpha)d\alpha,\quad c_{0}(\mu):=\frac{1}{2\pi}\int_{0}^{2\pi}d\mu(\alpha),
\\c_{j}(f)&:=\frac{1}{\pi}\int_{0}^{2\pi}e^{ij\alpha}f(\alpha)d\alpha,\quad c_{j}(\mu):=\frac{1}{\pi}\int_{0}^{2\pi}e^{ij\alpha}d\mu(\alpha),\quad j\in\mathbb{Z}.
\end{align*}
We shall use the following fact for the surface area measure of a planar convex body.
\begin{lemma}\label{relFour}Let $C\in\K(\R^2)$. The Fourier coefficients of the surface area measure of $C$, $\Sa_{C}$, are related with the Fourier coefficients of the support function of $C$ by
$$c_{j}(\Sa_{C})=(1-j^2)c_{j}(h_{C}),\quad j\in\mathbb{Z}.$$
\end{lemma}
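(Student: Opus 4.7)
The plan is to exploit the well-known distributional identity $\Sa(C,\cdot) = h_{C}'' + h_{C}$ (valid for every planar convex body $C$, where $h_C$ is regarded as the $2\pi$-periodic function $\alpha \mapsto h(C, (\cos\alpha, \sin\alpha))$ on $\R$), which was already invoked in the proof of Lemma \ref{l: conv measures}. Once this identity is in hand, computing Fourier coefficients reduces to understanding how the Fourier transform interacts with the second derivative, and the formula $c_j(\Sa_C) = (1-j^2)c_j(h_C)$ should drop out by integration by parts.

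Concretely, I would first write, using linearity of $c_j$:
\[
c_j(\Sa_C) \;=\; c_j(h_C'' + h_C) \;=\; c_j(h_C'') + c_j(h_C),
\]
where the action on the distribution $h_C''$ is interpreted via pairing with the test function $e^{ij\alpha}$. Then I would integrate by parts twice, using that $h_C$ is continuous and $2\pi$-periodic so that all boundary contributions cancel:
\[
c_j(h_C'') \;=\; \frac{1}{\pi}\int_0^{2\pi} e^{ij\alpha}\, h_C''(\alpha)\, d\alpha \;=\; \frac{1}{\pi}\int_0^{2\pi} \bigl(e^{ij\alpha}\bigr)'' h_C(\alpha)\, d\alpha \;=\; -j^{2}\, c_j(h_C).
\]
Combining the two contributions gives $c_j(\Sa_C) = (1 - j^{2})\, c_j(h_C)$, which is the claim.

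The only point that requires a tiny bit of care is that $h_C$ is in general only continuous, not $C^2$, so the second derivative must be taken in the sense of distributions and the integration by parts must be read as the definition of $\langle h_C'',\, e^{ij\alpha}\rangle$. Alternatively, one may approximate $C$ by a sequence $C_k \in \K(\R^2)$ of class $\cC_+^2$ in the Hausdorff metric, for which $h_{C_k}$ is smooth and the computation above is elementary, and then pass to the limit using the weak continuity of both sides in $C$ (the area measures $\Sa(C_k,\cdot)$ converge weakly to $\Sa(C,\cdot)$, and the Fourier coefficients depend continuously on the measure). I expect this passage-to-the-limit justification to be the only mild technicality; the algebraic heart of the statement is just the eigenvalue relation $(e^{ij\alpha})'' + e^{ij\alpha} = (1-j^{2})e^{ij\alpha}$ combined with the identity $\Sa_C = h_C'' + h_C$.
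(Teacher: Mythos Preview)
Your proposal is correct and follows essentially the same argument as the paper: invoke the distributional identity $\Sa_{C}=h_{C}+h_{C}''$, pair with the test function $e^{ij\alpha}$, and integrate by parts twice to obtain $(1-j^{2})c_{j}(h_{C})$. The paper's proof is just a terser version of what you wrote, without the optional approximation argument.
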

\begin{proof}By definition, 
$$c_{j}(\Sa_{C})=\frac{1}{\pi}\int_{0}^{2\pi}e^{ij\alpha}d\Sa(C,\alpha)=\frac{1}{\pi}\int_{0}^{2\pi}e^{ij\alpha}(h_{C}(\alpha)+h_{C}''(\alpha))d\alpha.$$
Interpreting $h_{C}+h_{C}''$ as a distribution and $\alpha\mapsto e^{ij\alpha}$ as a $2\pi$-periodic test function on $\R$, we can use twice integration by parts and we obtain the result.
\end{proof}

\begin{remark}
Note that $c_{-j}(h_{C})=\overline{c_{j}(h_{C})}$ since $h_{C}$
takes only real values. 
\end{remark}

Next, we collect the main geometric cases we will treat. For, we need the following lemma.

\begin{lemma}\label{coeffS1}A function $f\in L^{2}(S^{2m-1})$ is $S^1$-invariant, i.e., $f(\alpha u)=f(u)$ for every $\alpha\in S^1$, $u\in S^{2m-1}$, if and only if its harmonic expansion is given by
$$f\sim \sum_{j=0}^{\infty}\pi_{j,j}(f).$$
\end{lemma}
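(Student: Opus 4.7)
The proof hinges on the defining transformation law of the spaces $\hH^{2m}_{k,l}$: a spherical harmonic $Y\in\hH^{2m}_{k,l}$ satisfies $Y(\alpha u)=\alpha^{k}\overline{\alpha}^{l}Y(u)$ for all $\alpha\in S^{1}$. On $S^{1}$, writing $\alpha=e^{i\theta}$, the factor $\alpha^{k}\overline{\alpha}^{l}=e^{i(k-l)\theta}$ equals $1$ for every $\theta$ precisely when $k=l$. So the $S^{1}$-invariant bi-degrees are exactly the diagonal ones, and the whole statement should fall out of this observation together with the orthogonal decomposition \eqref{orthHk}.

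The plan is to prove the two implications separately. For the easy direction, suppose $f\sim\sum_{j\geq 0}\pi_{j,j}(f)$. Each $\pi_{j,j}(f)\in\hH^{2m}_{j,j}$ satisfies $\pi_{j,j}(f)(\alpha u)=\alpha^{j}\overline{\alpha}^{j}\pi_{j,j}(f)(u)=\pi_{j,j}(f)(u)$ for $\alpha\in S^{1}$, hence the $L^{2}$-limit is $S^{1}$-invariant.

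For the converse, assume $f$ is $S^{1}$-invariant and expand
\[
f\sim\sum_{k=0}^{\infty}\sum_{l=0}^{k}\pi_{l,k-l}(f).
\]
For fixed $\alpha\in S^{1}$, the map $u\mapsto f(\alpha u)$ has harmonic expansion $\sum_{k,l}\alpha^{k}\overline{\alpha}^{l}\pi_{k,l}(f)$, because $\pi_{k,l}(f)\in\hH^{2m}_{k,l}$ transforms with the factor $\alpha^{k}\overline{\alpha}^{l}$, and this rotated expansion is again a valid decomposition into the orthogonal pieces $\hH^{2m}_{k,l}$. Equating with the (unique) harmonic expansion of $f$ and using that the decomposition $\cC(S^{2m-1})\to\bigoplus\hH^{2m}_{k,l}$ is orthogonal, we obtain $\alpha^{k}\overline{\alpha}^{l}\pi_{k,l}(f)=\pi_{k,l}(f)$ for every $\alpha\in S^{1}$ and every pair $(k,l)$. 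Whenever $k\neq l$ we can pick $\alpha\in S^{1}$ with $\alpha^{k-l}\neq 1$, forcing $\pi_{k,l}(f)=0$. Therefore only the diagonal components $\pi_{j,j}(f)$ survive, and $f\sim\sum_{j\geq 0}\pi_{j,j}(f)$ as claimed.

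The argument is essentially purely formal; the only non-routine point is justifying that the rotated expansion $\sum_{k,l}\alpha^{k}\overline{\alpha}^{l}\pi_{k,l}(f)$ is indeed the harmonic expansion of $u\mapsto f(\alpha u)$. This follows from the $\U(m)$-invariance (in particular $S^{1}$-invariance through scalar multiplication) of each space $\hH^{2m}_{k,l}$, so the bi-degree components commute with the action of $S^{1}$ and uniqueness of the harmonic expansion applies.
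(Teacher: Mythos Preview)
Your proof is correct and follows the same approach as the paper: the key observation that $Y\in\hH^{2m}_{k,l}$ is $S^1$-invariant if and only if $k=l$, which is immediate from the transformation law $Y(\alpha u)=\alpha^{k}\overline{\alpha}^{l}Y(u)$. The paper's proof records only this observation in a single sentence, while you spell out both implications and the passage from individual harmonics to a general $f$ via the orthogonal decomposition; this extra detail is sound and not a different route.
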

\begin{proof}From the definition of the spherical harmonics of bi-degree $(k,l)$, it follows directly that $Y\in\hH^{2m}_{k,l}$ is a $S^1$-invariant function on $S^{2m-1}$ if and only if $k=l$. 
\end{proof}

\begin{examples}\label{examples}
\hfill
\begin{enumerate}\itemsep11pt
\item \label{coeffBalls}\emph{Balls.} It is well known that $\pi_{j}(h_{B_{2m}})=0$ for every $j\in\N$, $j>0$. 
From the orthogonal decomposition in \eqref{orthHk} it also holds $\pi_{k,l}(B_{2m})=0$ for every $(k,l)\in\N^2 \backslash\{(0,0)\}$.

\item \label{coeffRtheta}\emph{$S^1$-invariant convex bodies.} Let $K\in\K(\C^m)$ be a $S^1$-invariant convex body. Then, its support function is invariant under the action of $S^1$, i.e., $$h(K,\alpha u)=h(K,u)\quad\forall \alpha\in S^1,\, u\in S^{2m-1}.$$
Thus, using Lemma \ref{coeffS1}, we have that the harmonic expansion of the support function of a $S^1$-invariant convex body only contains terms of the form $\pi_{j,j}(h_{K})$, $j\in\N$.

\item \label{coeffosim}\emph{Symmetric convex bodies.} The Fourier coefficients $c_{j}(h_{C})$ of a symmetric convex body $C\in\K(\R^2)$ satisfy $c_{2j+1}(h_{C})=0$, for every $j\in\Z$ (since $h_{C}(\alpha+\pi)=h_{C}(\alpha)$ for every $\alpha\in[0,2\pi]$). 

In higher dimensions, for a symmetric convex body $K$, it holds $\pi_{2j+1}(h_{K})=0$ since $h_{K}(-u)=h_{K}(u)$ for every $u\in S^{2m-1}$. Hence, 
we also have $\pi_{k,l}(h_{K})=0$ for every $(k,l)\in\N^2$ such that $k+l$ is odd.

\item \label{coeffConsWidth}\emph{Convex bodies of constant width.} 
For a convex body $K$ of constant width, it holds $\pi_{2j}(h_{K})=0$, $j>0$ (see \cite[Proposition 5.7.1]{groemer.book}). If $K\in\K(\C^m)$ has constant width, then $\pi_{k,l}(h_{K})=0$ for every $(k,l)\in\N^2 \backslash\{(0,0)\}$ such that $k+l$ is even.

\item  \label{coeffI} \emph{Interval.} Assume that $I\subset\C$ is the interval $[-i/2,i/2]$. 
One can easily compute the Fourier coefficients of $\Sa_{I}$ obtaining
$c_{0}(\Sa_{I})=1/\pi$ and $c_{j}(\Sa_{I})=(1+(-1)^{j})/\pi$. Notice that the perimeter of a segment in $\R^2$ is twice its length.

\item \emph{Difference body.} The coefficients of the harmonic expansion of the difference body $D K$ 
are given by $$\pi_{j}(h_{D K})=\pi_{j}(h_{K})+\pi_{j}(h_{-K})=\left\{\begin{array}{rl} 2\pi_{j}(h_{K}), & \textrm{if } $j$ \textrm{ is even,}
\\ 0,  & \textrm{if } $j$ \textrm{ is odd.}
\end{array}\right.$$
Hence, the difference body operator (extended from the space of support functions to the space of continuous functions on the sphere) is a multiplier transformation with multipliers $\lambda_{2j}=2$ and $\lambda_{2j+1}=0$, $j\in\N$, which coincides with the Fourier coefficients of $\Sa_{I}$ up to its normalization factor $1/2\pi$ or $1/\pi$.
\end{enumerate}
\end{examples}

\section{The operator $\D_{C}$ as a multiplier transformation}\label{s:multiplier}

In view of the last example above, we would like to interpret the operator $\D_{C}$, for a general $C\in\K(\C)$, as a multiplier transformation and express its multipliers in terms of the Fourier coefficients of $h_{C}$. This is the content of the next result. 
\begin{theorem}\label{t1}Let $C\in\K(\C)$ be a convex body. 
The extension of the operator $\D_{C}$ to the space of continuous functions on $S^{2m-1}$ given by
$$\func{A_{C}}{\cC(S^{2m-1})}{\cC(S^{2m-1})}{f}{\int_{S^{1}}f(\overline{\alpha}\cdot)d\Sa(C,\alpha)}$$
is a multiplier transformation and the multipliers are 
\begin{equation}\label{multipliers}\lambda_{k,l}(A_{C})=\int_{0}^{2\pi}e^{-i(k-l)\alpha}d\Sa(C,\alpha).\end{equation}
\end{theorem}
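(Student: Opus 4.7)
The plan is to directly exploit the bi-homogeneity property of functions in $\hH^{2m}_{k,l}$: these spaces are, by definition, the $S^1$-weight spaces for the character $\alpha \mapsto \alpha^k\overline{\alpha}^l$, and since $A_C$ is nothing but an $S^1$-average of $f$ against $d\Sa(C,\cdot)$, its action on $\hH^{2m}_{k,l}$ is forced to be scalar multiplication by the associated Fourier-type integral. I would therefore (i) verify that $A_C$ acts by $\lambda_{k,l}$ on $\hH^{2m}_{k,l}$ by direct computation, and (ii) promote this to the required projection identity $\pi_{k,l}(A_Cf)=\lambda_{k,l}\pi_{k,l}(f)$ for arbitrary continuous $f$ by computing the $L^2$-pairing with test harmonics.

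For step (i), I would fix $Y\in\hH^{2m}_{k,l}$, write $\alpha=e^{i\theta}\in S^1$, and apply the defining property $Y(\beta u)=\beta^k\overline{\beta}^lY(u)$ with $\beta=\overline{\alpha}$. This gives $Y(\overline{\alpha}u)=\overline{\alpha}^k\alpha^lY(u)=e^{-i(k-l)\theta}Y(u)$, so pulling $Y(u)$ out of the integral yields
\begin{equation*}
(A_CY)(u)=Y(u)\int_0^{2\pi}e^{-i(k-l)\theta}d\Sa(C,\theta)=\lambda_{k,l}Y(u).
\end{equation*}
Hence $A_C$ restricts to scalar multiplication by $\lambda_{k,l}$ on $\hH^{2m}_{k,l}$, which also pins down the value of $\lambda_{k,l}$.

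For step (ii), I would test against an arbitrary $Y\in\hH^{2m}_{k,l}$ and compute $\langle A_Cf,Y\rangle_{L^2}$. Because $d\Sa(C,\cdot)$ is a finite Borel measure and $f,Y$ are continuous on the compact sphere, Fubini applies; after the change of variable $v=\overline{\alpha}u$ on $S^{2m-1}$, which is unitary and therefore preserves $d\sigma_{2m-1}$, and using $\overline{Y(\alpha v)}=\overline{\alpha}^k\alpha^l\overline{Y(v)}$, the inner spherical integral becomes $e^{-i(k-l)\theta}\langle f,Y\rangle_{L^2}$, and we obtain
\begin{equation*}
\langle A_Cf,Y\rangle_{L^2}=\lambda_{k,l}\langle f,Y\rangle_{L^2}.
\end{equation*}
Since this holds for every $Y\in\hH^{2m}_{k,l}$, expanding $\pi_{k,l}$ in an orthonormal basis of $\hH^{2m}_{k,l}$ immediately gives $\pi_{k,l}(A_Cf)=\lambda_{k,l}\pi_{k,l}(f)$ for all $f\in\cC(S^{2m-1})$, which is exactly the multiplier property.

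The proof is largely mechanical; the only bookkeeping that requires care is the signs on complex conjugates, i.e., verifying that applying the bi-degree homogeneity with the argument $\overline{\alpha}$ (as opposed to $\alpha$) produces the character $e^{-i(k-l)\theta}$ and not its complex conjugate, so that the resulting multiplier matches the stated formula \eqref{multipliers}. The rotation invariance of $d\sigma_{2m-1}$ under the scalar $S^1$-action and the application of Fubini are both routine, so no real obstacle is expected.
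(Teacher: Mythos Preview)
Your proof is correct and takes a genuinely more elementary route than the paper's. The paper proceeds by first assuming $C$ is of class $\cC^2$, building an approximating family of kernel operators $K_{\epsilon,g}$ (with $g=h_C+h_C''$), showing via dominated convergence that these converge to $A_C$, and then invoking the complex Funk--Hecke theorem to extract the multipliers as limits of integrals against Jacobi polynomials; the general $C$ is handled by a further approximation. Your argument bypasses all of this: since $A_C$ is an $S^1$-average and the spaces $\hH^{2m}_{k,l}$ are precisely the weight spaces for the $S^1$-action with character $\alpha\mapsto\alpha^k\overline{\alpha}^l$, the scalar action and the value of $\lambda_{k,l}$ drop out of a one-line computation, and the Fubini/change-of-variables step (ii) cleanly upgrades this to the projection identity for arbitrary $f$. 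What the paper's approach buys is an explicit embedding of $A_C$ into the Funk--Hecke framework of kernel operators on $B_2$, which could be useful if one wanted to perturb $A_C$ to operators not of pure $S^1$-convolution type; what your approach buys is a proof that needs no smoothness assumption on $C$, no approximation, no Jacobi polynomials, and no Funk--Hecke theorem at all.
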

\begin{proof}
Assume first that $C\in\K(\C)$ is of class $\cC^{2}$, that is, its support function is of class $\cC^{2}$. In this case, we have that $g:=h_{C}+h_{C}''$ is a continuous function on $S^1$. We denote by $\check g$ its homogeneous extension of degree 0 on $\R^{2}$.
Let $\epsilon>0$ and $v\in S^{2m-1}$. We define the map $K_{\epsilon,g}((v,\cdot))$ on $S^{2m-1}$ by
$$K_{\epsilon,g}((v,\cdot)):=\frac{m-1}{2^{m-2}\sigma_{2m-3}\epsilon^{m-1}}\chi_{[1-\epsilon,1]}(|(v,\cdot)|)\check{g}((v,\cdot)).$$
We claim that the function
$$v\mapsto\lim_{\epsilon\to 0}\int_{S^{2m-1}}K_{\epsilon,g}((w,v))f(w)d\sigma_{2m-1}(w)$$
converges uniformly to $A_{C}f$.

For $v\in S^{2m-1}$ fixed, any vector $w\in S^{2m-1}$ can be uniquely decomposed as  
$$w=z v+\sqrt{1-|z|^2}\xi,\quad z\in B_{2},\,\,\xi\in S^{2m-1},\,(\xi,v)=0.$$
The parametrization of the sphere corresponding to this decomposition, is given by 
$$d\sigma_{2m-1}(w)=(1-|z|^2)^{m-2}dzd\sigma_{2m-3}(\xi),$$
where $d\sigma_{j}$ is the standard measure on the sphere $S^j$.
Then, we have 
$$\int_{S^{2m-1}}\!\!\!\!K_{\epsilon,g}((v,w))f(w)d\sigma_{2m-1}(w)\!=\!
\!\int_{S^{2m-3}}\!\int_{B_2}\!\!\!K_{\epsilon,g}(z)f(zv+\sqrt{1-|z|^2}\xi)(1-|z|^2)^{m-2}dzd\sigma_{2m-3}(\xi).$$
Using the definition of $K_{\epsilon,g}$, polar coordinates on $B_2$, the 0-homogeneity of $\check{g}$ and the change of variable $t=(1-r)/\epsilon$, we get 
\begin{align*}&\int_{S^{2m-1}}K_{\epsilon,g}((v,w))f(w)d\sigma_{2m-1}(w)
\\&=\frac{m-1}{2^{m-2}\sigma_{2m-3}\epsilon^{m-1}}\!\int_{S^{2m-3}}\int_{B_2}\!\chi_{[1-\epsilon,1]}(|z|)\check{g}(\overline z)(1-|z|^2)^{m-2}f(zv+\sqrt{1-|z|^2}\xi)dzd\sigma_{2m-3}(\xi)
\\&=\frac{m-1}{2^{m-2}\sigma_{2m-3}\epsilon^{m-1}}\!\int_{S^{2m-3}}\int_{0}^{2\pi}\int_{1-\epsilon}^{1}(1-r^2)^{m-2}rf(re^{i\theta}v+\sqrt{1-r^2}\xi)g(e^{-i\theta})drd\theta d\sigma_{2m-3}(\xi)
\\&=\frac{m-1}{2^{m-2}\sigma_{2m-3}}\int_{S^{2m-3}}\int_{0}^{2\pi}\int_{0}^1t^{m-2}f_{\epsilon}(t,\theta,\xi)g(e^{-i\theta})dtd\theta d\sigma_{2m-3}(\xi),
\end{align*}
where
$$f_{\epsilon}(t,\theta,\xi):=(2-t\epsilon)^{m-2}(1-t\epsilon)f((1-t\epsilon)e^{i\theta}v+\sqrt{t\epsilon(2-t\epsilon)}\xi).$$
For $0<\epsilon<1$ we have $|f_{\epsilon}(t,\theta,\xi)|\leq 2^{m-2}\|f\|_{\infty}<+\infty$ since $0\leq t\leq 1$ and $f$ is bounded on $S^{2m-1}$. Moreover, 
$$\lim_{\epsilon\to 0}f_{\epsilon}(t,\theta,\xi)=2^{m-2}f(e^{i\theta}v).$$
Applying the theorem of the dominated convergence, we get
\begin{align*}\lim_{\epsilon\to 0}\int_{S^{2m-1}}K_{\epsilon,g}((v,w))f(w)d\sigma(w)
&=\frac{m-1}{\sigma_{2m-3}}\!\int_{S^{2m-3}}\int_{0}^{2\pi}\int_{0}^1t^{m-2}f(e^{i\theta}v)g(e^{-i\theta})dtd\theta d\sigma_{2m-3}(\xi)
\\&=\int_{0}^{2\pi}f(e^{i\theta}v)d\Sa(\overline{C},\theta)=\int_{0}^{2\pi}f(e^{-i\theta}v)d\Sa(C,\theta),
\end{align*}
and the claim follows. 

If $C\in\K(\C)$ is not of class $\cC^2$, 
it can be approximated by convex bodies of class $\cC^2$ and, by continuity, we get the result.  

From the complex Funk-Hecke theorem, we have that $A_{C}$ is a multiplier transformation, and the multipliers are given by
$$\lambda_{k,l}(A_{C})=\sigma_{2m-3}\lim_{\epsilon\to 0}\int_{B_2}K_{\epsilon,g}(z)\overline{P_{k,l}}(z)(1-|z|^2)^{m-2}dz.$$
Using polar coordinates on $B_{2}$, Proposition \ref{propPkl}, the 0-homogeneity of $\check{g}$, again the change of variable $t=(1-r)/\epsilon$, the dominated convergence theorem and $Q_{l}(a,b,1)=1$, $a,b>-1$, we get
\begin{align*}
&\lambda_{k,l}(A_{C})\!=
\lim_{\epsilon\to 0}\frac{c}{\epsilon^{m-1}}\!\int_{1-\epsilon}^{1}\!\!r^{|k-l|+1}Q_{\min\{k,l\}}(|k-l|,m-2,r^2)(1-r^2)^{m-2}dr\!\!\int_{0}^{2\pi}\!\!e^{-i\theta(k-l)}d\Sa(C,\theta)
\\&\!=c\lim_{\epsilon\to 0}\!\int_{0}^1\!\!Q_{\min\{k,l\}}(|k-l|,m-2,(1-t\epsilon)^2)t^{m-2}(1-t\epsilon)^{|k+l|+2}(2-t\epsilon)^{m-2}dt\!\!\int_{0}^{2\pi}\!\!\! e^{-i\theta(k-l)}d\Sa(C,\theta)
\\&\!=\int_{0}^{2\pi}e^{-i\theta(k-l)}d\Sa(C,\theta),
\end{align*}
where $c=(m-1)2^{2-m}$. Hence, the result follows. 
\end{proof}

\begin{remark}
The multipliers $\lambda_{k,l}(A_{C})$ can be easily related with the Fourier coefficients of the area measure of $C$ as
$$\lambda_{k,k}(A_{C})=2\pi c_{0}(\Sa_{C}),\quad\lambda_{k,l}(A_{C})=\pi \overline{c_{k-l}}(\Sa_{C}),\,\,k\neq l.$$ 
In order to simplify the notation we will redefine 
$$c_{k-l}(\Sa_{C}):=\lambda_{k,l}(A_{C}),\quad k,l\in\N.$$
From now on $c_{j}(\Sa_{C})$ has to be interpreted as the $j$-th Fourier coefficient of $\Sa_{C}$ with the normalization described in this remark. 
The same renormalization will be considered for the Fourier coefficients of the support function $h_{C}$.
\end{remark}

Now, using Lemma \ref{relFour} we can explicitly give the coefficients
of the harmonic expansion of the support function of the complex difference body, 
as contained in the next result.

\begin{corollary}\label{corCoeff}
The coefficients of the harmonic expansion of the support function of $\D_{C}K$, $C\in\K(\C)$, $K\in\K(\C^m)$ are given by 
$$\pi_{k,l}(h_{\D_{C}\!K})=c_{k-l}(\Sa_{C})\pi_{k,l}(h_{K})=(1-(k-l)^2)c_{k-l}(h_{C})\pi_{k,l}(h_{K}).$$
\end{corollary}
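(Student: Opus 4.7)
The plan is to deduce this corollary almost immediately from Theorem \ref{t1}, Lemma \ref{relFour}, and the renormalization convention fixed in the preceding remark. The first step is to rewrite the support function of $\D_C K$ as the image of $h_K$ under the operator $A_C$ introduced in Theorem \ref{t1}. Starting from the definition
\[
h(\D_C K,\xi)=\int_{S^1}h(\alpha K,\xi)d\Sa(C,\alpha),
\]
and using the identity $h(\alpha K,\xi)=h(K,\overline{\alpha}\xi)$ that was already verified in the proof of Proposition \ref{p: h eq vol mixto}, one obtains $h(\D_C K,\xi)=(A_C h_K)(\xi)$. Thus the extension of $\D_C$ to the space of continuous functions on $S^{2m-1}$ applied to the support function of $K$ reproduces the support function of $\D_C K$.

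Once this identification is in place, the second step is purely formal: by Theorem \ref{t1}, $A_C$ is a multiplier transformation, so
\[
\pi_{k,l}(h_{\D_C K})=\pi_{k,l}(A_C h_K)=\lambda_{k,l}(A_C)\,\pi_{k,l}(h_K).
\]
Invoking the renormalization convention $c_{k-l}(\Sa_C):=\lambda_{k,l}(A_C)$ from the remark immediately after Theorem \ref{t1}, this yields the first equality in the statement, namely $\pi_{k,l}(h_{\D_C K})=c_{k-l}(\Sa_C)\pi_{k,l}(h_K)$.

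For the second equality, I would apply Lemma \ref{relFour}, which relates the Fourier coefficients of the surface area measure of a planar convex body to those of its support function by $c_j(\Sa_C)=(1-j^2)c_j(h_C)$. Since the renormalization mentioned in the remark is applied consistently to both $c_j(\Sa_C)$ and $c_j(h_C)$, the factor $(1-j^2)$ is unchanged, and substituting $j=k-l$ gives the claimed formula. The main point to be a bit careful about is making sure the normalization factors (the $\pi$ and $2\pi$ from the definitions of the Fourier coefficients, versus the $\lambda_{k,l}$ coming directly out of the Funk--Hecke theorem) are consistently interpreted; this is exactly what the remark following Theorem \ref{t1} sets up, so there is no real obstacle here. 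The whole argument is just a chaining of Theorem \ref{t1}, the renormalization convention, and Lemma \ref{relFour}.
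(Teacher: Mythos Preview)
Your proposal is correct and follows essentially the same approach as the paper. The paper does not even spell out a proof for this corollary; it simply states that, using Lemma \ref{relFour}, the coefficients of the harmonic expansion of $h_{\D_C K}$ follow from Theorem \ref{t1} together with the renormalization convention in the preceding remark, which is exactly the chain of reasoning you have written out.
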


\begin{remark}For every $C\in\K(\C)$ and $K\in\K(\C^m)$, the convex body $\D_{C}K$ has Steiner point at the origin. Indeed, the Steiner point $s(L)$ of a convex body $L\in\K(\C^{m})$ is given by the first coefficient of the harmonic expansion of its support function, more precisely, by (see \cite[p. 430]{schneider_book93})
$$\pi_{1}(h_{L})=\langle s(L),\cdot\rangle_{\R^{2m}},$$
since $s(L)=\frac{1}{\omega_{2m}}\int_{S^{2m-1}}h_{K}(u)ud\sigma(u)$. From Corollary \ref{corCoeff} we have the claim. 
\end{remark}

\section{Fixed points of $\D_{C}K$. Connections of $\D_{C}K$ to classes of convex bodies}\label{s: fixed points}

It is well-known that $D K=2K$ if and only if $K$ is a symmetric convex body. Next, we study the set of fixed points of the operator $\D_{C}$, $C\in\K(\C)$.

\begin{theorem}\label{fixedPoints}Let $C\in\K(\C)$ and $K\in\K(\C^m)$. Then, $\D_{C}K=\lambda K$ if and only if $\lambda=\l(C)$ and either $\pi_{k,l}(h_{K})=0$ or $c_{k-l}(\Sa_{C})=\lambda$, $\forall\, (k,l)\in\N^2$.
\end{theorem}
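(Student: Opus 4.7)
My plan is to translate the body equality into an identity of support functions and expand everything in the $\U(m)$-spherical harmonic decomposition, then apply Corollary \ref{corCoeff} bi-degree by bi-degree.

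First, since two convex bodies coincide if and only if their support functions do, and (for $\lambda\ge 0$, which will be forced by the conclusion) $h_{\lambda K}=\lambda h_K$, the equality $\D_C K=\lambda K$ is equivalent to $h_{\D_C K}=\lambda h_K$ on $S^{2m-1}$. By completeness of the decomposition of $\cC(S^{2m-1})$ into the orthogonal pieces $\hH_{k,l}^{2m}$, this is in turn equivalent to
$$\pi_{k,l}(h_{\D_C K})=\lambda\,\pi_{k,l}(h_K)\qquad\forall\,(k,l)\in\N^2.$$

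Next, I invoke Corollary \ref{corCoeff} to rewrite $\pi_{k,l}(h_{\D_C K})=c_{k-l}(\Sa_C)\,\pi_{k,l}(h_K)$, so the condition becomes
$$\bigl(c_{k-l}(\Sa_C)-\lambda\bigr)\,\pi_{k,l}(h_K)=0\qquad\forall\,(k,l)\in\N^2,$$
which is exactly the stated alternative: for every $(k,l)$, either $\pi_{k,l}(h_K)=0$ or $c_{k-l}(\Sa_C)=\lambda$.

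Finally, to pin down the scalar, I specialize to $(k,l)=(0,0)$. With the normalization established in the remark after Theorem \ref{t1}, $c_0(\Sa_C)=\lambda_{0,0}(A_C)=\int_0^{2\pi}d\Sa(C,\alpha)=\l(C)$, the perimeter of $C$. On the other hand $\pi_{0,0}(h_K)$ is the constant function equal (up to a positive normalization) to half the mean width of $K$, so $\pi_{0,0}(h_K)\neq 0$ whenever $K$ has more than one point. In that non-degenerate case the alternative at $(0,0)$ forces $\lambda=c_0(\Sa_C)=\l(C)$.

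There is no real obstacle here: once Corollary \ref{corCoeff} is at hand, the proof is a one-line bi-degree comparison followed by reading off $\lambda$ from the $(0,0)$ component. The only care required is to stick to the paper's normalization convention for $c_j(\Sa_C)$ (so that $c_0(\Sa_C)$ equals $\l(C)$ and not $\l(C)/(2\pi)$); and, if one wishes to be pedantic, to dispose of the degenerate case $K=\{p\}$ using the vanishing first moment $\int\alpha\,d\Sa(C,\alpha)=0$ of the area measure, which gives $c_{\pm 1}(\Sa_C)=0$ and makes the iff consistent.
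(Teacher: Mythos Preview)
Your proof is correct and follows essentially the same approach as the paper: both apply Corollary~\ref{corCoeff} componentwise to reduce $\D_C K=\lambda K$ to $(c_{k-l}(\Sa_C)-\lambda)\pi_{k,l}(h_K)=0$, and then read off $\lambda=\l(C)$ from the $(0,0)$ component. Your write-up is somewhat more explicit (spelling out the support-function equivalence and the degenerate case $K=\{p\}$, which the paper relegates to a remark after the proof), but the argument is the same.
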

\begin{proof}We have
$$c_{0}(\Sa_{C})=\int_{S^1}d\Sa(C,\alpha)=\l(C).$$
Using Corollary \ref{corCoeff} we have $\pi_{0,0}(h_{\D_{C}K})=c_{0}(\Sa_{C})\pi_{0,0}(h_{K})$. Hence, $\pi_{0,0}(h_{\D_{C}K})=\lambda\pi_{0,0}(h_{K})$ if and only if $\lambda=\l(C)$.

The second condition also follows directly from Corollary \ref{corCoeff}.
\end{proof}

Notice that if $\pi_{0,0}(h_{K})=0$, then $K$ is a point and $\D_{C}K=0$, for every $C\in\K(\C)$.

From the previous result, we recover the classical case (cf. \ref{examples}.v) and we have these two particular facts. 
\begin{corollary}Let $K$ be a $S^1$-invariant convex body in $\C^m$. Then, $\D_{C}K=\l(C)K$ for every $C\in\K(\C)$.

Let $C$ be the unit disc in $\C$. Then, $\D_CK=2\pi K$ if and only if $K$ is $S^1$-invariant.
\end{corollary}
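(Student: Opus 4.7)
The plan is to deduce both statements from Theorem \ref{fixedPoints} by plugging in the Fourier/harmonic data of $K$ (for the first part) and of $\Sa_{B_2}$ (for the second part).

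For the first statement, suppose $K$ is $S^1$-invariant. By Example \ref{examples}.\ref{coeffRtheta} (which itself is an immediate consequence of Lemma \ref{coeffS1}), the harmonic expansion of $h_K$ involves only the diagonal pieces $\pi_{j,j}(h_K)$, i.e.\ $\pi_{k,l}(h_K)=0$ whenever $k\neq l$. Take $\lambda=\l(C)$ in Theorem \ref{fixedPoints}. For pairs with $k=l$ one needs $c_{0}(\Sa_C)=\l(C)$, which is precisely the identity $c_0(\Sa_C)=\int_{S^1}d\Sa(C,\alpha)=\l(C)$ used already in the proof of Theorem \ref{fixedPoints}. For pairs with $k\neq l$ the condition is satisfied because $\pi_{k,l}(h_K)=0$. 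Hence $\D_CK=\l(C)K$.

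For the second statement, note that $\l(B_2)=2\pi$. The support function of the unit disc is constant, so with the normalization introduced after Theorem \ref{t1} one has $c_0(\Sa_{B_2})=2\pi$, while $c_j(\Sa_{B_2})=0$ for every $j\neq 0$ (this follows either directly from $d\Sa(B_2,\alpha)=d\alpha$ together with formula \eqref{multipliers}, or from Lemma \ref{relFour} applied to $h_{B_2}\equiv 1$). Now apply Theorem \ref{fixedPoints} with $\lambda=2\pi$: the condition $c_{k-l}(\Sa_{B_2})=2\pi$ is automatic when $k=l$, but fails for $k\neq l$ since then $c_{k-l}(\Sa_{B_2})=0\neq 2\pi$. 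Therefore $\D_{B_2}K=2\pi K$ is equivalent to $\pi_{k,l}(h_K)=0$ for every $(k,l)\in\N^2$ with $k\neq l$, which by Lemma \ref{coeffS1} is exactly the condition that $h_K$ is $S^1$-invariant, i.e.\ that $K$ is $S^1$-invariant.

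The substantive step is the identification of the Fourier coefficients of $\Sa_{B_2}$ in the renormalized convention of Section \ref{s:multiplier}; once this is in place, both implications reduce to a bookkeeping application of Theorem \ref{fixedPoints} together with the characterization of $S^1$-invariant support functions given by Lemma \ref{coeffS1}.
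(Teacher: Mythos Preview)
Your proposal is correct and follows exactly the route intended by the paper: the corollary is stated there as an immediate consequence of Theorem~\ref{fixedPoints}, and you have simply written out the verification by inserting the harmonic data of an $S^1$-invariant $K$ (Example~\ref{examples}.\ref{coeffRtheta}) for the first part and the Fourier coefficients of $\Sa_{B_2}$ for the second.
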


A question related to the study of the fixed points of an operator concerns the study of the iteration of the operator, i.e., in our case, the study of $\D_{C}^N$ for $N\geq 1$. For the classical case, we clearly have $D\circ \dots\circ D=:D^N=2^{N-1}D$, that is, the iteration of the operator $D$ does not change the ``shape'' of the image of  $K\in\K(\R^n)$. 
From Theorem \ref{fixedPoints} we have that this fact generalizes only for few $C\in\K(\C)$; for instance, if $C$ is a ball.

\begin{corollary}Let $C\in\K(\C)$. Then, $\D_{C}^N=\l(C)^{N-1}\D_{C}$ if and only if for every $j\in\N$, $c_{j}(\Sa_{C})^{N-1}=\l(C)^{N-1}$ or $c_{j}(\Sa_{C})=0$.
\end{corollary}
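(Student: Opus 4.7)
The strategy is to mimic the argument used for Theorem \ref{fixedPoints}: pass to the harmonic expansion via Corollary \ref{corCoeff} and equate Fourier components on each bi-degree $(k,l)$.

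\emph{Step 1 (iterating the multiplier).} By Theorem \ref{t1}, $\D_{C}$ extends to the multiplier transformation $A_{C}$ on $\cC(S^{2m-1})$ with multipliers $c_{k-l}(\Sa_{C})$, and $A_{C}(h_{K})=h_{\D_{C}K}$. Iterating $N$ times and applying Corollary \ref{corCoeff}, for every $K\in\K(\C^{m})$ and every $(k,l)\in\N^{2}$,
\begin{equation*}
\pi_{k,l}(h_{\D_{C}^{N}K})=c_{k-l}(\Sa_{C})^{N}\,\pi_{k,l}(h_{K}),
\qquad
\pi_{k,l}(h_{\l(C)^{N-1}\D_{C}K})=\l(C)^{N-1}c_{k-l}(\Sa_{C})\,\pi_{k,l}(h_{K}).
\end{equation*}

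\emph{Step 2 (equating coefficients).} Since two convex bodies coincide iff their support functions coincide, and the harmonic expansion on $S^{2m-1}$ determines a continuous function uniquely, the identity $\D_{C}^{N}K=\l(C)^{N-1}\D_{C}K$ for every $K\in\K(\C^{m})$ is equivalent to
\begin{equation*}
c_{k-l}(\Sa_{C})\bigl(c_{k-l}(\Sa_{C})^{N-1}-\l(C)^{N-1}\bigr)\,\pi_{k,l}(h_{K})=0
\qquad\forall\,K,\ \forall(k,l)\in\N^{2}.
\end{equation*}
For each fixed $(k,l)\in\N^{2}$ one can exhibit a convex body $K$ whose support function has $\pi_{k,l}(h_{K})\neq 0$ (for example, take $K=B_{2m}+\varepsilon(r+Y)$ with $Y$ a nonzero real spherical harmonic in $\hH^{2m}_{k,l}\oplus\hH^{2m}_{l,k}$, $r>0$, and $\varepsilon>0$ small enough so that $K$ is convex; this is exactly the perturbation argument used throughout the paper). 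Therefore the displayed identity is equivalent to
\begin{equation*}
c_{k-l}(\Sa_{C})=0\quad\text{or}\quad c_{k-l}(\Sa_{C})^{N-1}=\l(C)^{N-1},\qquad\forall(k,l)\in\N^{2}.
\end{equation*}

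\emph{Step 3 (reducing the index set).} As $(k,l)$ runs over $\N^{2}$ the difference $j=k-l$ runs over all of $\Z$. Since $h_{C}$ is real-valued, the renormalized coefficients satisfy $c_{-j}(\Sa_{C})=\overline{c_{j}(\Sa_{C})}$, and $\l(C)\in\R_{\geq 0}$. Hence $c_{j}(\Sa_{C})=0$ is equivalent to $c_{-j}(\Sa_{C})=0$, and $c_{j}(\Sa_{C})^{N-1}=\l(C)^{N-1}$ implies $c_{-j}(\Sa_{C})^{N-1}=\l(C)^{N-1}$ by complex conjugation. Thus the conditions for $j$ and $-j$ are equivalent, so it suffices to impose them for $j\in\N$, which is exactly the statement of the corollary.

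\emph{Main obstacle.} The only nontrivial point is the ``test body'' in Step 2 ensuring $\pi_{k,l}(h_{K})\neq 0$ for each prescribed $(k,l)$; once this is granted the whole proof is a direct Fourier-coefficient calculation. This is a standard small-perturbation-of-the-ball argument, already tacit in the proof of Theorem \ref{fixedPoints}.
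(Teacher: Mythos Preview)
Your proof is correct and follows the same approach as the paper: iterate Corollary \ref{corCoeff} to obtain $\pi_{k,l}(h_{\D_{C}^{N}K})=c_{k-l}(\Sa_{C})^{N}\pi_{k,l}(h_{K})$ and compare with $\l(C)^{N-1}c_{k-l}(\Sa_{C})\pi_{k,l}(h_{K})$. You are in fact more explicit than the paper in two places---the existence of a test body with $\pi_{k,l}(h_{K})\neq 0$ and the reduction from $j\in\Z$ to $j\in\N$ via $c_{-j}(\Sa_{C})=\overline{c_{j}(\Sa_{C})}$---both of which the paper leaves to the reader under ``the claim follows''; only your remark that the perturbation argument is ``used throughout the paper'' is an overstatement, since the paper never spells it out.
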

\begin{proof}Let $K\in\K(\C^m)$. From Theorem \ref{fixedPoints}, we have that $\D_{C}^NK=\l(C)^NK$ if and only if $\pi_{k,l}(h_{K})=0$ or $c_{k-l}(\Sa_{C})=\l(C)$ for every $(k,l)\in\N^2$. Indeed, using recursively that $\D_{C}^NK=\D_{C}(\D_{C}^{N-1}K)$, we have $$\pi_{k,l}(h_{\D_{C}^NK})=c_{k-l}(\Sa_{C})\pi_{k,l}(h_{\D_{C}^{N-1}K})=c_{k-l}^{N}(\Sa_{C})\pi_{k,l}(h_{K}),$$
and the claim follows. 
\end{proof}

\smallskip
We study next the set of fixed points of $\D_{C_{N}}\!$ being $C_{N}$ a regular polygon with $N$ sides. This case enlightens how the ``symmetry'' of $C_{N}$ is inherited by $\D_{C_{N}}\!K$. 

\begin{proposition}
Let $C_{N}\subset\C$ be a regular polygon with $N$ sides each of length $1/N$ 
and normal vector $\alpha_i$, $i=\{1,\dots,N\}$. Then,
$$\D_{C_{N}}\!K=K\Leftrightarrow \alpha_i K=K \quad\forall i\in\{1,\dots,N\}.$$
\end{proposition}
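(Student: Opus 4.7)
The plan relies on Theorem \ref{fixedPoints} together with the explicit form of the surface area measure
\[
\Sa(C_N,\cdot)=\frac{1}{N}\sum_{i=1}^N\delta_{\alpha_i},
\]
coming from the fact that each edge of $C_N$ has length $1/N$ and outer unit normal $\alpha_i$. The ``if'' direction is then immediate: assuming $\alpha_i K=K$ for each $i$, substituting into the integral defining $h(\D_{C_N}K,\xi)$ yields
\[
h(\D_{C_N}K,\xi)=\frac{1}{N}\sum_{i=1}^N h(\alpha_i K,\xi)=h(K,\xi),
\]
so $\D_{C_N}K=K$.

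For the converse I would apply Theorem \ref{fixedPoints} with $\lambda=\l(C_N)=1$, which gives that for every $(k,l)\in\N^2$ either $\pi_{k,l}(h_K)=0$ or $c_{k-l}(\Sa_{C_N})=1$. A direct computation produces
\[
c_j(\Sa_{C_N})=\int_{0}^{2\pi}e^{-ij\theta}d\Sa(C_N,\theta)=\frac{1}{N}\sum_{i=1}^N\overline{\alpha_i}^{\,j},
\]
and since $|\overline{\alpha_i}^{\,j}|=1$, the equality case of the triangle inequality forces $c_j(\Sa_{C_N})=1$ if and only if $\alpha_i^{\,j}=1$ for every $i=1,\dots,N$. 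Hence the hypothesis $\D_{C_N}K=K$ is equivalent to: for every $(k,l)\in\N^2$ with $\pi_{k,l}(h_K)\neq 0$, one has $\alpha_i^{\,k-l}=1$ for all $i$.

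To close the argument I would show that the invariance $\alpha_i K=K$ is governed by the very same harmonic condition. Writing $\alpha_i K=K$ as $h_K(\overline{\alpha_i}u)=h_K(u)$ on $S^{2m-1}$ and invoking the bi-homogeneity $Y(\alpha u)=\alpha^k\overline{\alpha}^l Y(u)$ for $Y\in\hH^{2m}_{k,l}$, the invariance amounts to $\alpha_i^{\,l-k}\pi_{k,l}(h_K)=\pi_{k,l}(h_K)$ for every $(k,l)$, i.e., either $\pi_{k,l}(h_K)=0$ or $\alpha_i^{\,k-l}=1$. This matches exactly the condition obtained from Theorem \ref{fixedPoints}, yielding the equivalence. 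The only delicate step is the equality-case analysis for the triangle inequality characterizing $c_j(\Sa_{C_N})=1$; once that is noted, the rest is a purely formal translation through the harmonic decomposition of $h_K$.
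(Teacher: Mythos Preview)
Your argument is correct, but it follows a genuinely different route from the paper's. The paper does not invoke Theorem~\ref{fixedPoints} or the harmonic expansion at all: it observes that the outer normals $\alpha_i$ of a regular polygon centered at the origin form a finite multiplicative subgroup of $S^1$, and uses this group structure directly to show that $\alpha_i\D_{C_N}K=\D_{C_N}K$ for \emph{every} $K$ and every $i$; the implication $\D_{C_N}K=K\Rightarrow \alpha_iK=K$ then follows immediately. Your approach instead passes through the multiplier description, computing $c_j(\Sa_{C_N})=\frac{1}{N}\sum_i\overline{\alpha_i}^{\,j}$ and using the equality case of the triangle inequality to characterize $c_j=1$, then matching this with the harmonic criterion for $\alpha_iK=K$. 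The paper's proof is more elementary and yields as a by-product the stronger statement that $\D_{C_N}K$ is $G_N$-invariant for arbitrary $K$; your route stays within the spherical-harmonics framework of Section~\ref{s: fixed points} and makes the parallel with Theorem~\ref{fixedPoints} explicit, at the cost of a slightly heavier toolkit.
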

\begin{proof}
Using that $\D_{C}K$ is invariant under translations of $C$, we can assume that $C_{N}$ is a regular polygon centered at the origin. Hence, it has rotational symmetry of order $N$, 
and we have that the normal vectors, as elements in $S^1$, constitute a finite group $G_N$, 
i.e., for every $i,j\in\{1,\dots,N\}$ we have $\alpha_i\alpha_j=\alpha_k$ for some $k\in\{1,\dots,N\}$.
Here the multiplication on the group $G_N$ is given by the usual product of complex numbers.

We claim that $\alpha_i \D_{C_{N}}\!K=\D_{C_{N}}\!K$ for every $K\in\mathcal{K}(\C^m)$ and every $i\in\{1,\dots,N\}$.
Indeed, 
\begin{align*}
h(\alpha_i \D_{C_{N}}K,u)&=\int_{S^1}h(\alpha K,\overline{\alpha_i}u)d\Sa(C_{N},\alpha)
=\int_{S^1}h(\alpha_i\alpha K,u)d\Sa(C_{N},\alpha)
\\&=\frac{1}{N}\sum_{j=1}^N h(\alpha_i\alpha_jK,u)
=\frac{1}{N}\sum_{k=1}^N h(\alpha_kK,u)
=h(\D_{C_{N}}\!K,u).
\end{align*}

Thus, $\D_{C_{N}}\!K=K$ implies that $\alpha_iK=K$ for every $i=\{1,\dots,N\}$. If $K$ satisfies 
the previous condition, then $$h(\D_{C_{N}}\!K,u)=\frac{1}{N}\sum_{j=1}^N h(\alpha_i K,u)=h(K,u)$$
and the result follows.
\end{proof}

Next we study different \emph{families} of convex bodies, such as  Minkowski classes, rotation means and universal convex bodies, in relation
to the complex difference body.

\begin{definition}A \emph{Minkowski class of convex bodies in $\R^n$} is a subset $\mathcal{M}\subset\K(\R^n)$ closed in the Hausdorff metric and closed under Minkowski addition and nonnegative dilations.

Let  $G$ be a group acting on the set of convex bodies. A Minkowski class $\mathcal{M}$ is \emph{$G$-invariant} if for every $K\in\mathcal{M}$ and $g\in G$ we have $gK\in\mathcal{M}$.
\end{definition}

Examples of Minkowski classes are the symmetric convex bodies and the zonoids (see \cite{schneider.schuster07} and \cite[p.164]{schneider_book93}). Since the complex difference body operator is linear (for every $C\in\K(\C)$), we get the following result. 

\begin{proposition}For every $C\in\K(\C)$, the set $\mathcal{M}_{C}:=\image\D_{C}\subset\K(\C^m)$ is a Minkowski class. Moreover, if $G$ is a group acting on $\C$ under which $C$ is invariant (i.e.,\! $gC=C$ for every $g\in G$), then $\mathcal{M}_{C}$ is $G$-invariant, where the action of $G$ on $W$ is the diagonal action described in the introduction. 
\end{proposition}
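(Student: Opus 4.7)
My plan is to treat the two assertions separately. The verification that $\mathcal{M}_{C}$ is closed under Minkowski addition and under nonnegative dilations is short: since the integrand in the definition of $\D_{C}$ is linear in $K$, one obtains $\D_{C}(K_1+K_2)=\D_{C}K_1+\D_{C}K_2$, so $\D_{C}K_1,\D_{C}K_2\in\mathcal{M}_{C}$ implies $\D_{C}K_1+\D_{C}K_2\in\mathcal{M}_{C}$; and Lemma \ref{p: rho C} i) applied with $\rho=\lambda\geq 0$ gives $\lambda\D_{C}K=\D_{C}(\lambda K)\in\mathcal{M}_{C}$.

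What I expect to be the main obstacle is closedness of $\mathcal{M}_{C}$ in the Hausdorff metric. Assume $\D_{C}K_n\to L$. Using the translation invariance of $\D_{C}$, I would first normalize, say by moving each Steiner point of $K_n$ to the origin, and then aim to extract via the Blaschke selection theorem a subsequence $K_{n_k}\to K$, so that the continuity of $\D_{C}$ forces $\D_{C}K=L\in\mathcal{M}_{C}$. The real work is then a uniform bound on $\d(K_n)$. If $C$ is a point then $\D_{C}\equiv 0$ and there is nothing to prove, so I assume $\l(C)>0$. I would pick pairs $p_n,q_n\in K_n$ realizing $\d(K_n)$; then $[p_n,q_n]\subseteq K_n$ and, by the monotonicity of $\D_{C}$ combined with Lemma \ref{l:dc segment} ii), $\D_{C}K_n$ contains a translate of $|p_n-q_n|\,D(iC\cdot v_n)$, where $v_n$ is the unit vector along $q_n-p_n$. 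Since $\l(C)>0$, the factor $D(iC\cdot v_n)$ has positive diameter bounded below by a constant depending only on $C$, so $\d(K_n)\to\infty$ would force $\d(\D_{C}K_n)\to\infty$, contradicting the boundedness of the convergent sequence $\D_{C}K_n$.

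For the $G$-invariance, fix $g\in G$ and $K\in\K(W)$; the goal is to exhibit a body $\widetilde K\in\K(W)$ with $g\D_{C}K=\D_{C}\widetilde K$. Starting from
\begin{equation*}
h(g\D_{C}K,\xi)=h(\D_{C}K,g^{*}\xi)=\int_{S^1}h(g(\alpha K),\xi)\,d\Sa(C,\alpha),
\end{equation*}
I plan to perform the change of variables $\beta=g(\alpha)$, under which $\Sa(C,\cdot)$ is transported to $\Sa(gC,\cdot)$. When $g$ acts on $\C$ as multiplication by some $\rho\in S^1$, the diagonal action on $W$ commutes with $S^1$-scaling and one gets $g(\alpha K)=\beta K$, whence $g\D_{C}K=\D_{gC}K$; when $g$ acts antilinearly, the same manipulation produces $g\D_{C}K=\D_{gC}\bar K$, where $\bar K$ denotes the coordinate-wise complex conjugate. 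In both cases the hypothesis $gC=C$ collapses the right-hand side to an element of $\mathcal{M}_{C}$. The bookkeeping between the linear and antilinear cases is the only delicate point and is conceptually routine.
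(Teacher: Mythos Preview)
Your argument is correct, and in fact more complete than the paper's. The paper offers no proof beyond the prefatory remark that $\D_C$ is linear; this immediately covers closure under Minkowski sums and nonnegative dilations, but leaves closedness in the Hausdorff metric unaddressed. You rightly identify this as the substantive point and supply a clean argument: bound $\d(K_n)$ via monotonicity and Lemma~\ref{l:dc segment}~ii) (using that $\d(D(iC\cdot v_n))=2\d(C)>0$ whenever $\l(C)>0$), normalize by translation, and finish with Blaschke selection and continuity of $\D_C$. This works.

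For the $G$-invariance, your treatment is more elaborate than necessary. The ``diagonal action described in the introduction'' is scalar multiplication by elements of $\C$, so $G$ is effectively a subgroup of $S^1$ (any $\rho$ with $\rho C=C$ and $C$ not a point must have $|\rho|=1$). In that case Lemma~\ref{p: rho C}~i) gives directly $\rho\D_CK=\D_{\rho C}K=\D_CK$, so one even gets pointwise invariance, not just $\mathcal{M}_C$-invariance. Your change-of-variables computation recovers the same identity, and your antilinear case is an unneeded extension in the paper's context, but nothing you wrote is wrong.
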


For $C=I$ an interval, the class $\mathcal{M}_{I}$ coincides with the class of symmetric convex bodies. If $C=B_2$ is a centered ball, then $\mathcal{M}_{B_2}$ coincides with the class of $S^1$-invariant convex bodies (cf. Section \ref{rtinv}).

\begin{definition}[\cite{schneider742}] A convex body $K\in\K(\R^n)$ is said to be \emph{universal} if $\pi_{j}(h_{K})\neq 0$ for every $j\in\N$, $j\neq 1$.
\end{definition}

We define a convex body $K\in\K(\C^m)$ to be \emph{complex universal} if $\pi_{k,l}(h_{K})\neq 0$ for every $(k,l)\in\N^{2}$, $|k-l|\neq 1$.
From Corollary \ref{corCoeff} we get the following fact. 
\begin{corollary}
$\D_CK$ is complex universal if and only if $K$ and $C$ are both complex  universal convex bodies.
\end{corollary}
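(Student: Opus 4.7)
The proof is essentially a direct unpacking of the multiplier formula from Corollary \ref{corCoeff}, so the plan is short: fix the definitions, apply the formula, and then translate nonvanishing of a product into nonvanishing of its factors.

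The plan is to start from the key identity
\[
\pi_{k,l}(h_{\D_{C}K})=c_{k-l}(\Sa_{C})\,\pi_{k,l}(h_{K})=\bigl(1-(k-l)^{2}\bigr)c_{k-l}(h_{C})\,\pi_{k,l}(h_{K})
\]
valid for every $(k,l)\in\N^{2}$ by Corollary \ref{corCoeff} (combined with Lemma \ref{relFour}). Since $\D_{C}K$ is complex universal precisely when the left hand side is nonzero for every $(k,l)\in\N^{2}$ with $|k-l|\neq 1$, the proof reduces to checking that the two scalar factors on the right hand side are simultaneously nonzero for these indices if and only if $K$ and $C$ are both complex universal.

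For the ``if'' direction I would argue as follows. Assume $K$ and $C$ are complex universal. Fix $(k,l)\in\N^{2}$ with $|k-l|\neq 1$. Universality of $K$ gives $\pi_{k,l}(h_{K})\neq 0$. For the other factor, note that when $m=1$ the bi-degree decomposition collapses so that $\hH^{2}_{j,0}$ and $\hH^{2}_{0,j}$ carry exactly the Fourier modes $e^{\pm ij\theta}$; thus complex universality of $C\in\K(\C)$ is equivalent to $c_{j}(h_{C})\neq 0$ for all $j\in\Z$ with $|j|\neq 1$. Applying this with $j=k-l$, and using that $1-(k-l)^{2}\neq 0$ when $|k-l|\neq 1$, we conclude $c_{k-l}(\Sa_{C})\neq 0$. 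The product is therefore nonzero.

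For the ``only if'' direction, assume $\D_{C}K$ is complex universal. Fix $(k,l)\in\N^{2}$ with $|k-l|\neq 1$. Then the left hand side of the displayed identity is nonzero, so both $\pi_{k,l}(h_{K})$ and $c_{k-l}(h_{C})$ must be nonzero. The first fact, taken over all such $(k,l)$, says exactly that $K$ is complex universal. For the second, observe that every integer $j$ with $|j|\neq 1$ arises as $k-l$ for some $(k,l)\in\N^{2}$ (take $k=\max(j,0),\ l=\max(-j,0)$), so $c_{j}(h_{C})\neq 0$ for all such $j$, which by the observation above means $C$ is complex universal.

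No real obstacle is expected, as everything reduces to Corollary \ref{corCoeff} and the trivial fact that a product is nonzero iff both factors are. The only care point is checking that my interpretation of complex universality for $C\in\K(\C)$ (i.e.\ for $m=1$) correctly matches ``$c_{j}(h_{C})\neq 0$ for all $|j|\neq 1$''; this is the one line that uses the structure of $\hH^{2}_{k,l}$ for $m=1$ and the relation $c_{j}(\Sa_{C})=(1-j^{2})c_{j}(h_{C})$ from Lemma \ref{relFour}.
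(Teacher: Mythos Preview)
Your proof is correct and follows exactly the approach the paper intends: the paper simply says the corollary follows ``from Corollary \ref{corCoeff}'', and you have unpacked that one-line justification by applying the multiplier identity and factoring the nonvanishing condition. Your extra care in identifying complex universality of $C\in\K(\C)$ with the condition $c_{j}(h_{C})\neq 0$ for all $|j|\neq 1$ (via the collapse $\hH^{2}_{j}=\hH^{2}_{j,0}\oplus\hH^{2}_{0,j}$ in the case $m=1$) is a detail the paper leaves implicit, and it is the right reading of the definition.
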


\begin{definition}[\cite{schneider_book93}] For a convex body $K\in\K(\R^n)$, we say that \emph{$K'$ is a rotation mean of $K$} if there are $N\geq 1$, $N\in\N$, and $\rho_i\in \SO(n)$, $i=1,\dots,N$ rotations so that
\[
K'=\frac{1}{N}\left(\rho_1 K +\cdots +\rho_N K\right).
\]
\end{definition}
Hadwiger proved that for any convex body $K$ with positive dimension there is a sequence of rotation means of $K$ converging to a ball (see \cite{hadwiger}).

If $C\in\K(\C)$ is a polygon whose edges have length one, then, up to a constant, the complex difference body $\dck$ is a rotation mean of $K$.
As Corollary \ref{c: dck ball} shall show, rotations in $\SL(W,\C)$ are not enough in order $\dck$ to be a ball. Indeed, this
result characterizes the pairs $K\in\K(\C^m)$ and $C\in\K(\C)$ for which this can be done.

\section{Surjectivity and injectivity of $\D_{C}$ and related questions}\label{rtinv}

In this section, we use the interpretation of the operator $\D_{C}$ as a multiplier transformation in order to obtain statements about its range of injectivity and image for some geometrically interesting cases of $C$ or restricted to some classes of convex bodies. 

\begin{definition}[\cite{schneider_book93}] A non-empty convex body $K\subset\R^n$ is said to be \emph{indecomposable} if $K=K_{1}+K_{2}$ with $K_{1},K_{2}\in\K(\R^n)$ implies that $K_{1}$ and $K_{2}$ are homothetic to $K$. 
\end{definition}

Now we can prove the following 

\begin{corollary}
The map $\D\!: \K(\C)\times \K(\C^m)\to \K(\C^m)$ is neither injective
nor surjective.
\end{corollary}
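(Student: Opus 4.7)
The plan is to dispatch non-injectivity and non-surjectivity separately, each as an immediate consequence of a fact already established earlier in the paper.

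For non-injectivity, I would appeal to the translation invariance of $\D_C$ in its second argument, which is part iv) of Lemma \ref{p: rho C} (specialize $\varphi=T+p$ to $T=\mathrm{Id}$). For any $C\in\K(\C)$, any $K\in\K(\C^m)$, and any nonzero $p\in W$, the identity $\D_C(K+p)=\D_C K$ shows that the distinct pairs $(C,K)$ and $(C,K+p)$ have the same image, so $\D$ is not injective. The same conclusion could also be obtained from the first argument: since $\Sa(C,\cdot)=\Sa(C+q,\cdot)$ for every $q\in\C$, translating $C$ leaves $\D_C K$ unchanged.

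For non-surjectivity, I would invoke the remark following Corollary \ref{corCoeff}, which records that every body in the image of $\D$ has its Steiner point at the origin. Indeed, by Lemma \ref{relFour} we have $c_{\pm 1}(\Sa_C)=(1-1)c_{\pm 1}(h_C)=0$, and then Corollary \ref{corCoeff} forces $\pi_1(h_{\D_C K})=0$, which is equivalent to $s(\D_C K)=0$. It therefore suffices to exhibit any $L\in\K(\C^m)$ with $s(L)\neq 0$; the concrete choice $L=B_{2m}+p$ with $p\neq 0$ satisfies $h_L(u)=1+\langle p,u\rangle$ and hence $s(L)=p\neq 0$, so $L$ lies outside the image of $\D$.

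The main obstacle is essentially nonexistent: both failures are structural consequences of results already proved. If one wanted a less trivial family of non-injectivity witnesses beyond mere translations, Theorem \ref{fixedPoints} and its corollary give plenty: for any $S^1$-invariant $K$, $\D_CK=\l(C)K$ depends on $C$ only through its perimeter, so the fibers of $\D$ over such bodies are already infinite-dimensional as $C$ ranges over planar convex bodies of a fixed perimeter.
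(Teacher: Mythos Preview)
Your argument is correct. For non-injectivity you and the paper are essentially aligned: the paper simply says that already for $C$ a segment the classical difference body is not injective, and your translation-invariance argument is one concrete way to witness that.

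For non-surjectivity the routes differ genuinely. You invoke the remark after Corollary~\ref{corCoeff} that $s(\D_C K)=0$, so any body with nonzero Steiner point lies outside the image; this is short and uses only the harmonic-expansion machinery. The paper instead derives non-surjectivity from Theorem~\ref{DcK polytope}: it takes a scalene triangle $T$ (indecomposable), argues that any preimage would force $C$ and $K$ to be polytopes, writes $T=\sum r_i\alpha_i K$, and uses indecomposability to force each summand to be homothetic to $T$, reaching a contradiction with the lack of rotational symmetry of $T$. The paper's approach is heavier but proves more: it exhibits a body \emph{with Steiner point at the origin} that is not in the image, so the obstruction is not merely translational. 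Your argument, while perfectly sufficient for the stated corollary, only rules out off-center bodies; if you want to match the paper's strength you could note (as you hint with the $S^1$-invariant remark) that the image is also not all of the Steiner-point-zero bodies.
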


\begin{proof}
The map $D$ is not injective since for the real case, i.e., $C$ a line segment, it is not.

For the surjectivity, let us consider, in the plane, a triangle $T$, all whose edges have different lengths.
It is known that such triangles are indecomposable (cf.\! \cite[Theorem 3.2.11]{schneider_book93}), i.e., there exist no convex bodies $K,L$ so that $K+L=T$ and $K,L$ are
no dilates of $T$.
Since we know, from Theorem \ref{DcK polytope}, that if $\dck = T$, then both $C,K$ are polytopes, we have that
\[
T=\displaystyle\sum_{i=1}^{N}{\alpha_i r_i K}
\]
where $\alpha_i$ are the unit normal vectors to the edges of $C$, with corresponding lengths $r_{i}$, $i=1,\dots,N$.
Since $T$ is indecomposable $\alpha_i r_i K= a_i T$ for $a_i\geq 0$, $i=1,\dots, N$.
Then, $K$ is, up to a dilation, a rotation of $T$, namely $K=\overline{\alpha_i}r_i^{-1}a_i T$, what yields
\[
T=\displaystyle\sum_{i=1}^{N}{\alpha_i r_i \overline{\alpha_i}r_i^{-1}a_i T}=\displaystyle\sum_{i=1}^{N}{a_i T}.
\]
But this is not $\D_C T$ for any $C\in\K(\C)$ since all the edges of $T$ have different length.
\end{proof}

Next we deal with \textbf{convex bodies of constant width}. We denote by $\mathcal{W}^{2m} \subset \K(\C^m)$ the class of convex bodies of constant width.
 
\begin{corollary}\label{cor:cw}Let $C\in\K(\C)$ and $K\in\K(\C^m)$. Then, $\D_{C}K\in \mathcal{W}^{2m}$ if and only if  for every $(k,l)\in\N^2$ with $k+l\geq 2$ even, either $c_{k-l}(h_{C})=0$ or $\pi_{k,l}(h_{K})=0$.

In particular, $\image\D_{C}\subseteq \mathcal{W}^{2m}$ if and only if $C$ has constant width. In this case, $\D_{C}$ is injective among $\mathcal{W}^{2m}$ if and only if $c_{2j+1}(h_{C})\neq 0$ for every $j\in\N\backslash\{0\}$.
\end{corollary}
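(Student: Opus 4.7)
The plan is to reduce the entire corollary to the multiplier formula of Corollary \ref{corCoeff}, combined with the harmonic characterization of constant width recorded in Examples \ref{examples}.(iv). The pivotal observation is that whenever $k+l$ is even, the integer $k-l$ is also even; therefore $|k-l|\neq 1$ and the scalar prefactor $1-(k-l)^2$ appearing in Corollary \ref{corCoeff} takes values in $\{1,-3,-15,\dots\}$, never vanishing on the relevant range.

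For the main equivalence, I would begin by invoking Examples \ref{examples}.(iv) to rewrite ``$\D_C K\in\mathcal{W}^{2m}$'' as the requirement that $\pi_{k,l}(h_{\D_C K})=0$ for every $(k,l)\in\N^2\setminus\{(0,0)\}$ with $k+l$ even. Substituting
\[
\pi_{k,l}(h_{\D_C K}) \;=\; \bigl(1-(k-l)^2\bigr)\,c_{k-l}(h_C)\,\pi_{k,l}(h_K)
\]
from Corollary \ref{corCoeff} and cancelling the nonzero prefactor, the condition becomes $c_{k-l}(h_C)\,\pi_{k,l}(h_K)=0$ for every such $(k,l)$, which is exactly the stated disjunction.

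For the first ``in particular'' clause, ``$\image\D_C\subseteq\mathcal{W}^{2m}$'' means that the main disjunction holds for every $K\in\K(\C^m)$ simultaneously. Since for each bi-degree $(k,l)$ with $k\neq l$ one can exhibit bodies $K$ realizing $\pi_{k,l}(h_K)\neq 0$, this forces $c_{k-l}(h_C)=0$ for every even $k-l\neq 0$, which by Examples \ref{examples}.(iv) is precisely the harmonic characterization of $C$ being of constant width; the converse direction is immediate from the main equivalence. For injectivity of $\D_C$ on $\mathcal{W}^{2m}$ (understood modulo translations, since $\D_C$ is translation invariant), the support functions of $\mathcal{W}^{2m}$-bodies live---apart from the constant and translation modes---entirely in bi-degrees $(k,l)$ with $k+l$ odd. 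By the multiplier structure, $\D_C$ is injective on this space modulo translations iff $c_{k-l}(\Sa_C)=(1-(k-l)^2)c_{k-l}(h_C)\neq 0$ for every remaining relevant $(k,l)$; using $c_{-j}(h_C)=\overline{c_j(h_C)}$, this boils down to $c_{2j+1}(h_C)\neq 0$ for all $j\geq 1$.

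The main subtleties are book-keeping: (i) one must carefully quotient by translations in the injectivity statement, and (ii) one must track which prefactors $1-(k-l)^2$ vanish (only at $|k-l|=1$) against which bi-degrees can actually carry nonzero coefficients in the harmonic expansion of support functions of constant-width bodies. Once these points are pinned down, the result follows purely from Corollary \ref{corCoeff} and standard properties of multiplier transformations on $\cC(S^{2m-1})$.
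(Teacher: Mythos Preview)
Your approach coincides with the paper's: both reduce the main equivalence to the multiplier formula of Corollary~\ref{corCoeff} together with the harmonic characterization of constant width from Examples~\ref{examples}.(iv), and your observation that $1-(k-l)^2\neq 0$ for even $k-l$ is exactly what is needed to pass from $c_{k-l}(\Sa_C)$ to $c_{k-l}(h_C)$. The paper's proof is terser---it stops at the decomposition $\pi_{2j}(h_{\D_C K})=\sum_{l}c_{2(l-j)}(\Sa_C)\,\pi_{l,2j-l}(h_K)$ and simply writes ``and the results follow''---so on the main equivalence your write-up is in fact more explicit than the paper's.

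There is, however, a gap in your treatment of the first ``in particular'' clause. You argue the forward implication correctly (if $\image\D_C\subseteq\mathcal W^{2m}$ then $c_{2j}(h_C)=0$ for all $j\geq 1$, i.e.\ $C$ has constant width), carefully restricting to bi-degrees with $k\neq l$. But you then assert that ``the converse direction is immediate from the main equivalence'', and this overlooks precisely the diagonal bi-degrees $(j,j)$ with $j\geq 1$: there $c_0(h_C)\neq 0$ for any $C$ that is not a single point, so the main equivalence would force $\pi_{j,j}(h_K)=0$ for \emph{every} $K$. For $m=1$ this is vacuous (the spaces $\hH^{2}_{j,j}$ vanish for $j\geq 1$), but for $m\geq 2$ it is false. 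Concretely, take $C=B_2$ (a body of constant width) and $K=B_2\times B_2\subset\C^2$: since $K$ is $S^1$-invariant one has $\D_{B_2}K=2\pi K$ by Theorem~\ref{fixedPoints}, and $B_2\times B_2$ is not of constant width, so $\D_{B_2}K\notin\mathcal W^{2m}$. Thus the converse, as stated, actually fails for $m\geq 2$; the paper's ``and the results follow'' does not address this diagonal case either.
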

\begin{proof}The convex body $\D_{C}K$ has constant width if and only if $\pi_{2j}(h_{\D_{C}K})=0$ for every $j>0$ (see Example \ref{examples}.iv). Using the orthogonal decomposition  \eqref{orthHk} of $\hH^{2m}_{2j}$, Funk-Hecke theorem and Equation \eqref{multipliers}, we get
$$\pi_{2j}(h_{\D_{C}\!K})=\sum_{l=0}^{2j}\pi_{l,2j-l}(h_{\D_{C}K})=\sum_{l=0}^{2j}\lambda_{l,2j-l}(A_{C})\pi_{l,2j-l}(h_{K})=\sum_{l=0}^{2j}c_{2(l-j)}(S_{C})\pi_{l,2j-l}(h_{K}),$$ 
and the results follow. 
\end{proof}

For the classical case, we have that the image of the class of bodies with constant width is the class of balls. This fact also follows directly from \ref{examples}.vi, and it remains true for any symmetric convex body $C$ with $c_{2j}(h_{C})\neq 0$, $j\in\N$. 
For a general $C$, $\image\D_{C}(\mathcal{W}^{2m})\subseteq\mathcal{W}^{2m}$. 

\smallskip
Next we devote ourselves to the more restrictive case of \textbf{balls}. First we study for which $C\in \K(\C)$ and $K\in\K(\C^m)$, the 
complex difference body is a ball, obtaining the following result, which is a consequence of Example \ref{examples}.i) and Corollary \ref{corCoeff}, and gives the reciprocal of Corollary \ref{cor:symCtnt}.

\begin{corollary}\label{c: dck ball}
Let $C\in\K(\C)$ and $K\in\K(\C^m)$. Then, $\D_{C}K$ is a ball if and only if  
for every $(k,l)\in\N^2 \backslash\{(0,0)\}$, either $c_{k-l}(h_{C})=0$ or $\pi_{k,l}(h_{K})=0$.

In particular, $D_{B_2}$ is injective among the class of $S^1$-invariant convex bodies. 
\end{corollary}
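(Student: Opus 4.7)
The plan is to combine the multiplier description of $\D_C$ from Corollary \ref{corCoeff} with the harmonic characterization of balls from Example \ref{examples}.i. First I would recall that, by Example \ref{examples}.i, a convex body $L \in \K(\C^m)$ is a ball if and only if $\pi_{k,l}(h_L) = 0$ for every $(k,l) \in \N^2 \setminus \{(0,0)\}$; such a ball is automatically centered at the Steiner point of $L$, and for $L = \D_C K$ this is already the origin by the Remark following Corollary \ref{corCoeff}. Applying the formula
\[
\pi_{k,l}(h_{\D_C K}) = (1 - (k-l)^2)\, c_{k-l}(h_C)\, \pi_{k,l}(h_K),
\]
the body $\D_C K$ is a ball exactly when this product vanishes for every $(k,l) \neq (0,0)$, which is the claimed equivalence.

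To tidy the phrasing, I would remark that the prefactor $1 - (k-l)^2$ vanishes precisely for $|k-l| = 1$, and at those indices $\pi_{k,l}(h_{\D_C K})$ is zero automatically (this is the Steiner-point identity already built into $\D_C K$), so the either/or condition is non-restrictive there; for all remaining $(k,l) \neq (0,0)$ vanishing of the product forces $c_{k-l}(h_C) = 0$ or $\pi_{k,l}(h_K) = 0$, and conversely. This gives both directions of the biconditional simultaneously.

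For the injectivity assertion, I would observe that $\Sa_{B_2}$ is a constant multiple of arclength on $S^1$, so $c_j(\Sa_{B_2}) = 0$ for $j \neq 0$ while $c_0(\Sa_{B_2}) = \l(B_2) = 2\pi \neq 0$. By Lemma \ref{coeffS1}, an $S^1$-invariant $K$ satisfies $\pi_{k,l}(h_K) = 0$ whenever $k \neq l$, so Corollary \ref{corCoeff} collapses to $\pi_{k,k}(h_{\D_{B_2} K}) = 2\pi\, \pi_{k,k}(h_K)$ and $\pi_{k,l}(h_{\D_{B_2} K}) = 0$ for $k \neq l$. Hence $\D_{B_2} K$ determines all diagonal coefficients $\pi_{k,k}(h_K)$ and, by Lemma \ref{coeffS1} again, determines $h_K$ and therefore $K$. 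The only delicate point in the whole argument is recognizing that the $|k-l|=1$ indices in the first equivalence are handled automatically by the Steiner-point identity and do not impose genuine constraints on $C$ or $K$; once Corollary \ref{corCoeff} is available, the rest is bookkeeping with Fourier coefficients.
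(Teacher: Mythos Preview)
Your proof is correct and follows exactly the route the paper indicates: it derives the corollary directly from Example~\ref{examples}.i) and Corollary~\ref{corCoeff}, and your treatment of the injectivity of $\D_{B_2}$ on $S^1$-invariant bodies via Lemma~\ref{coeffS1} is precisely the intended argument. Your additional observation about the $|k-l|=1$ indices being automatically handled by the Steiner-point identity is a useful clarification that the paper leaves implicit.
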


\begin{remark}If we consider $m=1$ in the previous corollary, i.e., $\D_{C}:\K(\C)\to\K(\C)$, it can be restated as follows: 
\begin{center}
\emph{$\D_{C}K$ is a ball if and only if for every $j\in\N \backslash\{0\}$, either $c_{j}(h_{C})=0$ or $c_{j}(h_{K})=0$.}
\end{center}
The previous condition coincides with the one found by G\"ortler in \cite{goertler} studying the relation between two (planar) convex bodies $K_{1}$ and $K_{2}$ in order
\begin{equation}\label{invRot}V_{2}(K_{1},K_{2})=V_{2}(K_{1},\rho K_{2})\quad\forall \rho\in\SO(2),
\end{equation}
to hold, i.e., the mixed volume to be invariant with respect to rotations on just one of the variables of the mixed volume 
(see also \cite[Theorem 4.6.2]{groemer.book}).

G\"ortler showed that \eqref{invRot} holds if and only if for every $j\geq 2$, either $c_{j}(h_{K_{1}})=0$ or $c_{j}(h_{K_{2}})=0$. Hence, we have that $\D_{C}K$ is a ball if and only if $V_{2}(C,K)=V_{2}(C,\rho K)$ for every $\rho\in\SO(2)$. Using the expression of the support function $h_{\D_{C}\!K}$ given in Proposition \ref{p: h eq vol mixto}, in particular \eqref{V2Rotation}, we clearly have the equivalence between both conditions. 
\end{remark}

\smallskip
Concerning the class of \textbf{$S^1$-invariant convex bodies}, we get from Lemma \ref{coeffS1} the following result, which, in particular, studies the reciprocal of the trivial fact that if $K$ is $S^1$-invariant, then $\dck$ is so too.

\begin{corollary}\label{cor:rtinv}Let $C\in\K(\C)$ and $K\in\K(\C^m)$. Then, $\D_{C}K$ is a $S^1$-invariant convex body  if and only if  for every $(k,l)\in\N^2$ with $k\neq l$, either $c_{k-l}(h_{C})=0$ or $\pi_{k,l}(h_{K})=0$.

In particular, $\image\D_{C}\subseteq\{\textrm{class of }S^1\textrm{-invariant convex bodies}\}$ if and only if $C$ is a ball. In this case, we have indeed equality.

Moreover, the operator $\D_{C}$ is injective among the class of $S^1$-invariant convex bodies for every $C$.
\end{corollary}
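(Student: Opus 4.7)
The plan is to deduce all three assertions from the multiplier identity in Corollary \ref{corCoeff} combined with the harmonic criterion in Lemma \ref{coeffS1}. Since $\Sa(C+p,\cdot)=\Sa(C,\cdot)$ for every $p\in\C$, the operator $\D_C$ is unaffected by translations of $C$, so I normalize $s(C)=0$, which kills $c_{\pm 1}(h_C)$.

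For the characterization itself, Lemma \ref{coeffS1} says $\D_C K$ is $S^1$-invariant iff $\pi_{k,l}(h_{\D_C K})=0$ for every $(k,l)\in\N^2$ with $k\neq l$. Substituting Corollary \ref{corCoeff}, the vanishing condition becomes $(1-(k-l)^2)\,c_{k-l}(h_C)\,\pi_{k,l}(h_K)=0$: for $|k-l|=1$ it is automatic (either by the prefactor $(1-(k-l)^2)$ or by $c_{\pm 1}(h_C)=0$), while for $|k-l|\geq 2$ it is equivalent to $c_{k-l}(h_C)\,\pi_{k,l}(h_K)=0$. This is the stated condition.

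For the image claim, if $C$ is a ball then the characterization gives $\D_C K$ $S^1$-invariant for every $K$, since every non-zeroth Fourier coefficient of $h_C$ vanishes (or equivalently by Lemma \ref{p: rho C}, part ii)). Conversely, assume $\D_C$ has $S^1$-invariant image. For each $j\in\Z$ with $|j|\geq 2$ I need to show $c_j(h_C)=0$; picking any $(k,l)$ with $k-l=j$ and a nonzero $Y\in\hH^{2m}_{k,l}$, I form $K=B_{2m}+\epsilon Y$ for a small $\epsilon>0$---a convex body because $h_{B_{2m}}\equiv 1$ has positive definite spherical Hessian---so $\pi_{k,l}(h_K)=\epsilon Y\neq 0$ and the characterization forces $c_j(h_C)=0$. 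Combined with $c_{\pm 1}(h_C)=0$, it follows that $h_C$ is constant and $C$ is a disc. Equality of the images then follows from Theorem \ref{fixedPoints}, which yields $\D_C K=\l(C)K$ whenever $K$ is $S^1$-invariant: any $S^1$-invariant $K'$ is realized as $\D_C(K'/\l(C))$.

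The third assertion reduces to one line via the same identity $\D_C K=\l(C)K$: two $S^1$-invariant bodies $K,K'$ with $\D_C K=\D_C K'$ satisfy $\l(C)K=\l(C)K'$ and hence agree, provided $\l(C)>0$ (the degenerate case $C=\{p\}$ with $\D_C\equiv 0$ being excluded). The only genuinely non-mechanical step in the whole argument is the construction of a convex body with a prescribed nonzero bidegree-$(k,l)$ harmonic component, and the smooth perturbation of the unit ball above handles it cleanly.
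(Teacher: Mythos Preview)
Your argument is correct and matches the paper's intended approach (the paper gives no details beyond pointing to Lemma~\ref{coeffS1} and Corollary~\ref{corCoeff}). One technical slip worth noting: a nonzero $Y\in\hH^{2m}_{k,l}$ with $k\neq l$ is genuinely complex-valued, since the bidegree condition $Y(\alpha u)=\alpha^k\bar\alpha^{\,l}Y(u)$ forces it, so $1+\epsilon Y$ is not a real support function; perturb instead by $\epsilon(Y+\bar Y)=2\epsilon\,\mathrm{Re}\,Y$, which is real and smooth and still has $\pi_{k,l}$-component $\epsilon Y\neq 0$, so your construction goes through unchanged.
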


\smallskip
Analog to the study of convex bodies of constant width, we have, for the class of \textbf{symmetric convex bodies}:
\begin{corollary}\label{cor:osim}Let $C\in\K(\C)$ and $K\in\K(\C^m)$. Then, $\D_{C}K$ is symmetric if and only if for every $(k,l)\in\N^2$ with $k+l$ odd, either $c_{k-l}(h_{C})=0$ or $\pi_{k,l}(h_{K})=0$.

In particular, $\image\D_{C}\subseteq\{\textrm{symmetric convex bodies}\}$ if and only if $C$ is symmetric. In this case, the operator $\D_{C}$ is injective among all the symmetric convex bodies $K\in\K(\C^m)$ if and only if $c_{2j}(h_{C})\neq 0$, $j\in\N$.
\end{corollary}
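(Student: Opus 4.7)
The plan is to read off everything from the harmonic characterization of symmetry in Example \ref{examples}.iii, combined with the multiplier formula of Corollary \ref{corCoeff}. Recall that $L\in\K(\C^m)$ is symmetric iff $\pi_{k,l}(h_L)=0$ for every $(k,l)\in\N^2$ with $k+l$ odd. Applying this to $L=\D_C K$ together with
$$\pi_{k,l}(h_{\D_C K})=(1-(k-l)^2)\,c_{k-l}(h_C)\,\pi_{k,l}(h_K),$$
I would split the pairs $(k,l)$ with $k+l$ odd into two cases: when $|k-l|=1$ the prefactor vanishes, so $\pi_{k,l}(h_{\D_C K})=0$ holds automatically and is consistent with the stated disjunction; when $|k-l|\geq 3$ the prefactor is nonzero, so $\pi_{k,l}(h_{\D_C K})=0$ becomes equivalent to $c_{k-l}(h_C)=0$ or $\pi_{k,l}(h_K)=0$. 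This delivers the first equivalence.

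For the assertion that $\image\D_C$ is contained in the class of symmetric convex bodies if and only if $C$ is symmetric, sufficiency is immediate from Example \ref{examples}.iii: symmetry of $C$ forces $c_j(h_C)=0$ for odd $j$, so the disjunction just established holds for every $K$. For necessity I would test against a \emph{complex-universal} body $K_0\in\K(\C^m)$, namely one with $\pi_{k,l}(h_{K_0})\neq 0$ for every $(k,l)$, which is the complex analogue of Schneider's universal bodies. Then the hypothesis forces $c_j(h_C)=0$ for every odd $j$ with $|j|\geq 3$; the remaining coefficients $c_{\pm 1}(h_C)$ encode the Steiner point of $C$, and since $\D_C$ depends on $C$ only up to translation I may assume $s(C)=0$ so that $c_{\pm 1}(h_C)=0$ as well, yielding that $C$ is symmetric.

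For the injectivity claim, I plan to exploit that $A_C$ is a multiplier transformation (Theorem \ref{t1}) with multipliers $c_{k-l}(\Sa_C)=(1-(k-l)^2)c_{k-l}(h_C)$. If $K,K'$ are symmetric then $h_K-h_{K'}$ has only harmonics with $k+l$ even, so $\D_C K=\D_C K'$ is equivalent to $c_{k-l}(\Sa_C)\bigl(\pi_{k,l}(h_K)-\pi_{k,l}(h_{K'})\bigr)=0$ for every $(k,l)$ with $k+l$ even. Injectivity among symmetric bodies is therefore equivalent to $c_j(\Sa_C)\neq 0$ for every even $j$, which via Lemma \ref{relFour} is precisely $c_{2j}(h_C)\neq 0$ for every $j\in\N$ (the $j=0$ case being automatic since $c_0(\Sa_C)=\l(C)>0$). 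The main obstacle is producing the complex-universal body $K_0$ used in the converse of the image claim; this should follow by a perturbation argument in the space of support functions of class $\cC_{+}^2$, using the bi-graded decomposition \eqref{orthHk} of $\hH^{2m}$ to adjust each $\pi_{k,l}$-component independently.
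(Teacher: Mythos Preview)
Your approach is essentially the one the paper intends: it is the exact analogue of the proof given for Corollary~\ref{cor:cw}, reading the harmonic criterion for symmetry (Example~\ref{examples}.iii) against the multiplier formula of Corollary~\ref{corCoeff}. The handling of the Steiner point (normalizing $s(C)=0$ to dispose of $c_{\pm 1}(h_C)$) is correct and in fact needed, since $\D_C$ is translation invariant in $C$.

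Two small remarks. First, for the necessity direction of the image claim you invoke a complex-universal body $K_0$; this works, but existence of such a body is not established in the paper, and your perturbation sketch would need to be carried out. A lighter alternative avoids universality entirely: for each fixed odd $j$ with $|j|\geq 3$ it suffices to exhibit \emph{some} $K$ with $\pi_{k,l}(h_K)\neq 0$ for a single pair $(k,l)$ with $k-l=j$ and $k+l$ odd (take $h_K=1+\epsilon\,\mathrm{Re}\,Y$ for a nonzero $Y\in\hH^{2m}_{k,l}$ and $\epsilon$ small). The same remark applies to the ``only if'' direction of the injectivity statement: to show non-injectivity when $c_{2j_0}(h_C)=0$ you must actually produce two distinct symmetric bodies differing only in harmonics with $k-l=\pm 2j_0$, which you do not spell out. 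Second, your sentence that the case $|k-l|=1$ is ``consistent with the stated disjunction'' is a bit loose: when $|k-l|=1$ the prefactor $1-(k-l)^2$ vanishes, so $\pi_{k,l}(h_{\D_CK})=0$ regardless of whether the disjunction holds; the statement should be read modulo translation (i.e., with $s(C)=0$ and $s(K)=0$), in which case the disjunction is indeed automatic there.
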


\def\cprime{$'$}

\end{document}